\colorlet{Mycolor1}{green!20!orange!80!}
\tikzstyle{R} = [rectangle, rounded corners, minimum width=4cm, minimum height=1cm, text centered, text width=4cm, draw=black]
\tikzstyle{B} = [rectangle, rounded corners, minimum width=11cm, minimum height=1cm, text centered, text width=11cm, draw=black]
\tikzstyle{arrow} = [thick, ->, >=stealth]
\tikzstyle{line} = [draw, thick, color=black!65, -latex']
\newcommand{\rr}[2]{node (p#1) [R] {#2}}
\newcommand{\bb}[2]{node (p#1) [B] {#2}}
\date{}
\newtheorem{definition}{Definition}
\newtheorem{theorem}{Theorem}
\newtheorem{lemma}{Lemma}
\newtheorem{proposition}{Proposition}
\newtheorem{corollary}{Corollary}
\newtheorem{assumption}{Assumption}
\DeclareMathOperator*{\argmax}{argmax}
\DeclareMathOperator*{\argmin}{argmin}
\DeclareMathOperator*{\argsup}{argsup}
\DeclareMathOperator*{\arginf}{arginf}
\newtheoremstyle{noparens}
{}{}
{\itshape}{}
{\bfseries}{\bf{.}}
{ }
{\thmname{#1}\thmnumber{ #2}\mdseries\thmnote{ #3}}
\theoremstyle{noparens}
\title{Distributionally robust second-order stochastic dominance constrained optimization with Wasserstein distance}
\author{Yu Mei, Jia Liu, Zhiping Chen\footnote{corresponding author, email: {zchen@mail.xjtu.edu.cn} }}
\affil{School of Mathematics and Statistics, Xi'an Jiaotong University, Xi'an, 710049, P. R. China, Center for Optimization Technique and Quantitative Finance, Xi'an International Academy for Mathematics and Mathematical Technology, Xi'an, 710049, P. R. China}
\begin{document}

\maketitle

% REQUIRED
\noindent {\bf Abstract}\\
  We consider a distributionally robust second-order stochastic dominance constrained optimization problem. %, where the true distribution of the uncertain parameters is ambiguous.
  {We require the dominance constraints hold with respect to all probability distributions in a Wasserstein ball centered at the empirical distribution.} We adopt the sample approximation approach to develop a linear programming formulation that provides a lower bound. We propose a novel split-and-dual decomposition framework which provides an upper bound. We {establish quantitative convergency for both lower and upper approximations given some constraint qualification conditions}. To efficiently solve the non-convex upper bound problem, we use a sequential convex approximation algorithm. Numerical evidences on a portfolio selection problem valid the {convergency} and {effectiveness } of the proposed two approximation methods.

% REQUIRED
\noindent {\bf Keywords:}\\
  stochastic dominance, distributionally robust optimization, Wasserstein distance, sequential convex approximation

% REQUIRED
\noindent {\bf MSC:}\\
 Primary, 90C15, 91B70; Secondary, 90C31, 90-08

\section{Introduction}
Stochastic dominance (SD), originated from economics, is popular in comparing random outcomes. In their pioneering work \cite{D1}, Dentcheva and Ruszczy{\'{n}}ski studied the stochastic optimization problem with univariate SD constraints, where they developed the optimality conditions and duality theory. The commonly adopted univariate SD concepts in stochastic optimization are first-order SD (FSD) and second-order SD (SSD). Researchers have investigated the stochastic optimization problem with FSD constraints from different aspects, such as stability and sensitivity analysis \cite{Dentcheva2009Stability}, mixed-integer linear programming formulations \cite{Luedtke2007New}, and linear programming relaxations \cite{Noyan2006Relaxations}. The stochastic optimization problem with SSD constraints has been intensively studied in quite a few literature. For theoretical foundations, the stability and sensitivity analysis were presented in \cite{Dentcheva2013Stability}. For solution methods, different linear programming formulations were derived in \cite{D1,Luedtke2007New} and the cutting plane methods were adopted in \cite{R1,F1,S1}. The stochastic programs with SD constraints induced by mixed-integer linear recourse were studied in \cite{gollmer2008stochastic} for FSD and in \cite{Gollmer2011A} for SSD. Stochastic optimization problems with multivariate extensions of SD constraints were considered in \cite{Homem2009Cutting,Haskell2017Primal,Noyan2018Optimization}.
%Haskell et al. defined multivariate SD in \cite{Haskell2017Primal} using multivariate utility functions\textcolor{blue}{, while Noyan and Rudolf defined multivariate SD in \cite{Noyan2018Optimization} by using scalarization functions to scalarize multivariate random vectors into univariate random variables and adopting the univariate SD definition.} To solve multivariate SD constrained stochastic optimization problems, Haskell et al. developed primal-dual algorithms \cite{Haskell2017Primal}, while Noyan and Rudolf adopted a cut generation method \cite{Noyan2018Optimization}.
There is also a rich literature considering SD under dynamic settings, such as \cite{Escudero2016}. Applications of SD constraints in finance were investigated in \cite{ref2,Hu2017Optimization,Giorgio2019Long}.

A challenge of stochastic programming problems is the accessibility of the true probability distribution of the uncertain parameters. In some practical problems, the true probability distribution sometimes could not be completely observed.  %through some training dataset.
For this reason, distributionally robust optimization (DRO) models have been proposed to address the lack of {complete information on the true probability distribution}, where the expectations are taken under the worst-case probability distribution in a specific ambiguity set. There are mainly two types of ambiguity sets in the existing literature. The first type is the moment-based ambiguity sets, which is characterized by some moment inequalities
%resulting distributionally robust optimization problems have been widely studied
\cite{Xu,Zymler2013Distributionally}. The second type is the distance-based ambiguity sets, which contain all probability distributions close to a nominal distribution measured by some probability metrics, such as Kullback-Leibler divergence \cite{liu2019distributionally}, $\phi$-divergence \cite{Jiang2016Data},
%Prohorov metric \cite{Erdogan2006},
and Wasserstein distance \cite{liu2019distributionally,ji2020data,xie2019distributionally,chen2018data,mei2020data}. %Fournier and Guillin in \cite{fournier2015rate}
Esfahani and Kuhn \cite{MohajerinEsfahani2017} estimated a priori probability that the true distribution belongs to the {{Wasserstein ball}} and established finite sample and asymptotic guarantees for the distributionally robust solutions. With the duality theory, the DRO problem with Wasserstein ball can be reformulated as convex programs \cite{MohajerinEsfahani2017,RuiGao2016,zhao2018data}. Such reformulations were then applied to chance-constrained DRO problems \cite{chen2018data,ji2020data,xie2019distributionally}.

Incorporating the basic ideas of SD and distributional robustness, Dentcheva and Ruszczy{\'{n}}ski \cite{bibitem2} first introduced the distributionally robust SD and established the optimality conditions of the stochastic optimization problem with distributionally robust SSD constraints. Since then, a stream of {research} has paid attention to stochastic optimization with distributionally robust SD constraints. %Considering the distributional uncertainty,
Dupa{\v{c}}ov{\'a} and Kopa \cite{dupavcova2014robustness} {modeled the ambiguity
of the distribution in FSD by a linear combination of a nominal distribution and a known contamination distribution with the combination parameter being in a parametric uncertainty set.}
%by its contamination with another fixed probability distribution and derived a robust decision that is FSD efficient.
Guo, Xu and Zhang \cite{Xu} proposed a discrete approximation scheme for the moment-based ambiguity sets and approximately solved the resulting stochastic optimization problem with distributionally robust SSD constraints. Under a moment-based ambiguity set, Liesi{\"o} et al. \cite{liesio2020portfolio} %developed models that
identified optimal portfolios robustly SSD dominating a given benchmark.
%In addition to distributionally robust FSD and SSD, some \textcolor{blue}{research} also focused on robust $k$th ($k> 2$) order SD.
Chen and Jiang \cite{chen2018stability} and Zhang et al. \cite{zhang493distributionally} studied stability of DRO problems with $k$th order SD constraints induced by full random recourse. The optimality conditions and duality theory of DRO problems with multivariate SD were discussed in \cite{haskell2016ambiguity,chen2019multivariate}.

As is mentioned above,
SD constrained optimization under distributional ambiguity is {an important class of problems}. %many practical applications such as financial decision making.
%However, to the best of our knowledge,
While the distributionally robust SSD constrained optimization with Wasserstein ball has not been well studied in the existing literature. The {main difficulties of solving such problems} lie in three aspects.
\begin{itemize}
\item The semi-infiniteness {induced from both} the SSD and the distributionally robust %robustness
counterpart are the main challenge. % in solving distributionally robust SSD constrained optimization problems.
\item Distributionally robust SSD constraints are non-smooth %which makes
such that gradient based methods fail to work here.
\item {Compared to moment-based ambiguity sets, the Wasserstein distance contains an extra optimization problem on computing the optimal transportation from the true distribution to the nominal distribution. Such an inner-level optimization problem leads a min-max-min structure and non-convexity of the distributionally robust SSD constraints.}
\end{itemize}
{Therefore, it is quite challenging for us to study the approximation schemes and algorithms for the distributionally robust SSD constrained optimization problem {with Wasserstein ball}. Thanks to the rapid development recently on the strong duality theory of DRO problems with Wasserstein ball \cite{MohajerinEsfahani2017,RuiGao2016}, we have a chance to show in this paper efficient approximation methods for such SSD constrained problem.} %the distributionally robust SSD problem under Wasserstein distance.}

In detail, we first utilize the strong duality results for DRO problem with Wasserstein ball in \cite{RuiGao2016} to derive a reformulation of distributionally robust SSD constraints. Then we adopt the sampling approach to approximate the infinitely many constraints by finitely many constraints and develop a linear programming formulation which is a lower bound approximation. {We further analyze the quantitative convergency of the lower bound approximation.} To overcome the `curse of dimensionality' of the linear programming approximation, we propose a novel split-and-dual decomposition framework. We separate the support set of the parameter in distributionally robust SSD constraints into finite sub-intervals. For each sub-interval, we exchange the order of the supremum operator and the expectation operator to get an upper bound approximation. We prove that the optimal value of the upper bound approximation converges to that of the original problem as the number of sub-intervals goes to infinity and we quantitatively estimate the approximation error. As the derived upper bound approximation problem is non-convex, we apply the sequential convex approximation method to solve it.

{This paper improves results in quite a few papers. Specifically, we extend the DRO with Wasserstein ball \cite{MohajerinEsfahani2017,chen2018data,ji2020data,RuiGao2016,xie2019distributionally} to a more complicated case with infinitely many constraints induced by SSD.} Compared with robust SD constrained optimization problems in \cite{Xu,zhang493distributionally,liesio2020portfolio,chen2019multivariate}, we study Wasserstein ball rather than moment-based ambiguity sets. The main contributions of this paper include:
\begin{itemize}
 % \item %For the first time, w  We study the distributionally robust SSD constrained optimization with Wasserstein ball.
  \item We derive a lower bound approximation of the distributionally robust SSD constrained optimization with Wasserstein ball by the sample approximation approach, % to develop a linear programming formulation that provides
  {and establish the quantitative convergency of the approximation problem.}
  \item We propose a novel split-and-dual decomposition framework, which provides an upper bound approximation of the problem. As far as we know, the upper bound approximations of SD constrained problems are seldom studied in existing literature. {We prove the convergency of the approximation approach and quantitatively estimate the approximation error when the number of sub-intervals is sufficiently large.}
\end{itemize}
While preparing this paper for publication, we became aware of an independent work by Peng and Delage
\cite{PeD20} on distributionally robust SSD constraints with Wasserstein ball.
Peng and Delage \cite{PeD20} formulated the distributionally robust SSD constrained problem as a multistage robust optimization problem, and proposed a tractable conservative approximation that exploits finite adaptability and a scenario-based lower bounding problem with nice numerical feasibility.
We distinguish our work from Peng and Delage \cite{PeD20} in a different split-and-dual decomposition framework for the upper bound and detailed quantitative convergency analysis.

The rest of this paper is organized as follows. In section \ref{2}, we introduce the distributionally robust SSD constrained optimization problem. In section \ref{3}, we adopt the sampling approximation approach to obtain the lower bound approximation and establish the quantitative convergency. In section \ref{3.2}, we propose a split-and-dual decomposition framework to derive the upper bound approximation, whose optimal value can be obtained by solving a sequence of second-order cone programming problems. We also quantitatively estimate the approximation error when the number of sub-intervals is sufficiently large. Numerical evidences valid the {convergency}  and {effectiveness}  of the proposed approximation methods in Section \ref{4}. Section \ref{5} concludes the paper.

\section{Preliminaries}\label{2}

\subsection{Distributionally robust second-order stochastic dominance}
First we introduce some notations. Let $\mathcal{U}$ be the set of all non-decreasing and concave {utility} functions $u:\mathbb{R} \to \mathbb{R}$. We use $(\cdot)_+=\max\{\cdot,0\}$
to denote the positive part function. {Let $d(x,A):=\inf_{y\in A}\|x-y\|$ be the distance from a point $x$ to a set $A$. Denote the deviation of a set $A$ from another set $B$ by $\mathbb{D}(A,B)=\sup_{x\in A}d(x,B)$ and the Hausdorff distance between $A$ and $B$ by $\mathbb{H}(A,B)=\max \{\mathbb{D}(A,B),\mathbb{D}(B,A) \}$.} Let $(\Omega,\mathscr{F})$ be a measurable space with $\mathscr{F}$ being the Borel $\sigma$-algebra on $\Omega$, and $\mathscr{P}$ be the set of all probability measures on $(\Omega,\mathscr{F})$.

Before introducing the distributionally robust SSD, we recall the definition of classic SSD. Consider the random variables $X$ and $Y$ on a probability space $(\Omega, \mathscr{F},P)$ {with finite first order moments}, here $P \in \mathscr{P}$ is the true distribution. We say that $X$ stochastically dominates $Y$ in the second order, denoted by $X \succeq_{(2)}^{P} Y$, if $\mathbb{E}_{P}[u(X)] \ge \mathbb{E}_{P}[u(Y)],~\forall u \in \mathcal{U}$. $X \succeq_{(2)}^{P} Y$ is equivalent to
\begin{equation}\label{equ:1}
\mathbb{E}_{P}[(\eta-X)_+-(\eta-Y)_+]\le0,~\forall \eta \in \mathbb{R}.
\end{equation}
Let $\mathcal{Y}$ be the set of all realizations of the random variable $Y$. It has been shown in \cite[Proposition 1]{liu2016approximation} that \eqref{equ:1} is equivalent to
\begin{equation}\label{equ:2}
\mathbb{E}_{P}[(\eta-X)_+-(\eta-Y)_+]\le0,~\forall \eta \in \mathcal{Y}.
\end{equation}

{In some data-driven problems, it is difficult to obtain the complete information about the true probability measure $P$}. To address this issue, % lack of perfect information of $P$,
Dentcheva and Ruszczy{\'{n}}ski \cite{bibitem2} introduced distributionally robust SSD by considering an ambiguity set of probability measures instead of $P$.
\begin{definition}
  $X$ dominates $Y$ robustly in the second order over a set of probability measures $\mathcal{Q} \subset \mathscr{P}$,
  denoted by $X \succeq^{\mathcal{Q}}_{(2)} Y$, if
  \begin{equation*}
    \mathbb{E}_{P}[u(X)] \ge \mathbb{E}_{P}[u(Y)],~\forall u \in \mathcal{U},~\forall P \in \mathcal{Q}.
  \end{equation*}
\end{definition}

\begin{comment}
It is known from \eqref{equ:1} that $X \succeq^{\mathcal{Q}}_{(2)} Y$ is equivalent to
\begin{equation}\label{equ:11}
\mathbb{E}_{P}[(\eta-X)_+-(\eta-Y)_+]\le0,~\forall \eta \in \mathbb{R},~\forall P \in \mathcal{Q}.
\end{equation}
\end{comment}

In the rest of this paper, we investigate the following distributionally robust SSD constrained optimization problem
\begin{flalign*}
(P_{SSD})&
&\min \limits_{z \in Z}  \quad &  f(z)& \\
& &\quad\text{s.t.} \,\quad & z^T \xi \succeq_{(2)}^{\mathcal{Q}} z_0^T \xi, &
\end{flalign*}
where $f$ is proper and continuous, $\xi$ denotes the random vector, $Z\subset \mathbb{R}^n$ is a bounded polyhedral set, and $z_0 \in Z$ is a given benchmark.
From \eqref{equ:1}, problem ($P_{SSD}$) can be rewritten as
\begin{equation}\label{shenme}\begin{array}{cl}
\min \limits_{z \in Z}  \quad & f(z)  \\
\quad\text{s.t.} \ \ \quad & \mathbb{E}_{P}[(\eta- z^T \xi)_+-(\eta-z_0^T \xi)_+]\le0,~\forall \eta \in \mathbb{R},~\forall P \in \mathcal{Q}.
\end{array}\end{equation}
{We can observe that the semi-infiniteness of constraints in problem \eqref{shenme} arises from} $\eta \in \mathbb{R}$ and $P \in \mathcal{Q}$, induced from the SSD constraints and the distributionally robust ambiguity set, respectively. Moreover, the constraint functions in problem \eqref{shenme} are non-smooth as $(\cdot)_+$ is involved. Therefore, problem \eqref{shenme}, as well as problem ($P_{SSD}$), is hard to solve. To reduce the difficulties in solving problem ($P_{SSD}$), we firstly assume that the support set $\Xi$ has a polyhedral structure. The polyhedral structure of $\Xi$, also assumed in \cite[Corollary 5.1]{MohajerinEsfahani2017}, contributes to applying the duality theory of second-order conic programming when deriving the upper bound approximation later in this paper.

\begin{assumption}\label{ass:Xi}
{
$\Xi$ is polyhedral, i.e.,
$\Xi=\{\xi\in\mathbb{R}^n \mid C \xi \le d\},$
where $C \in \mathbb{R}^{l \times n}$, $d \in \mathbb{R}^l$, and $z_{0}^{T}\Xi:=\{z_0^T\xi \mid \xi \in \Xi\}$ is a compact set.}
\end{assumption}

{If $\Xi$ is a polyhedron, then Assumption \ref{ass:Xi} holds automatically.}
Keeping in mind the equivalence of \eqref{equ:1} and \eqref{equ:2}, problem \eqref{shenme} can be formulated as
\begin{equation}\label{shenme2}
    \begin{array}{cl}
\min \limits_{z \in Z}  \quad & f(z)  \\
\quad\text{s.t.} \ \ \quad & \mathbb{E}_{P}[(\eta- z^T \xi)_+-(\eta-z_0^T \xi)_+]\le0,~\forall \eta \in \mathcal{R}:=z_{0}^{T}\Xi,~\forall P \in \mathcal{Q}.
\end{array}
\end{equation}
%{The compactness of $\mathcal{R}$ enables us to divide $\mathcal{R}$ into finitely many intervals with same length when deriving the upper bound approximation later.} %The boundedness of $\mathcal{R}$ helps us reduce the index set $\mathbb{R}$ into a compact set \cite{liu2016approximation}.
We denote the smallest and largest numbers in $\mathcal{R}$ by $\mathcal{R}_{{\min}}$ and $\mathcal{R}_{{\max}}$, respectively, thus, %that is,
$\mathcal{R}=[\mathcal{R}_{{\min}},\mathcal{R}_{{\max}}]$.

\subsection{Data-driven Wasserstein ambiguity set}

In this section, we introduce the data-driven Wasserstein ambiguity set $\mathcal{Q}$ and recall a fundamental duality result in DRO problems with Wasserstein ball \cite{MohajerinEsfahani2017,RuiGao2016,zhao2018data}.

Let $\mathscr{P}(\Xi)$ be the space of all probability measures $P$ supported on $\Xi$ with $\mathbb{E}_{P}[\|\xi\|]<\infty$. We consider %recall the definition of
$1$-Wasserstein distance, also known as Kantorovich metric.

\begin{definition}\label{def:0928}
The \emph{Kantorovich metric} $d_K\colon \mathscr{P}(\Xi) \times \mathscr{P}(\Xi) \to \mathbb{R}_+$ is defined via
%\begin{small}
\begin{equation*}
d_K(P,Q):=\inf_{\pi} \Bigg\{ \int_{\Xi^2}\|\xi_1-\xi_2\|\pi (d\xi_1, d\xi_2):
\begin{array}{ll}
 \pi \text{ is a joint distribution of }\xi_1\text{ and }\xi_2 \\
 \text{with marginals }P\text{ and }Q,\text{ respectively}
 \end{array}
 \Bigg\}.
\end{equation*}
%\end{small}
\end{definition}

{Kantorovich metric can be written in a pseudo metric form \cite{ChenSunXu2021}.}

\begin{proposition}\label{def:Kantorovich}
{Let $\mathscr{G}$ be the set of all Lipschitz continuous functions $h:\Xi \to \mathbb{R}$ with modulus $1$. Then %The \emph{Kantorovich metric} $d_K\colon \mathscr{P}(\Xi) \times \mathscr{P}(\Xi) \to \mathbb{R}_+$ is defined via
\begin{equation*}
d_K(P,Q):=\sup_{h\in\mathscr{G}} \left| \int_{\Xi}h(\xi)P(d\xi)-\int_{\Xi}h(\xi)Q(d\xi) \right|.
\end{equation*}}
\end{proposition}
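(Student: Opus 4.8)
The identity in Proposition~\ref{def:Kantorovich} is the classical Kantorovich--Rubinstein duality theorem, and the plan is to prove the two bounds $d_K(P,Q)\ge\sup_{h\in\mathscr{G}}\left|\int_{\Xi}h\,dP-\int_{\Xi}h\,dQ\right|$ and $d_K(P,Q)\le\sup_{h\in\mathscr{G}}\left|\int_{\Xi}h\,dP-\int_{\Xi}h\,dQ\right|$ separately. Since $h\in\mathscr{G}$ implies $-h\in\mathscr{G}$, the supremum with the absolute value coincides with $\sup_{h\in\mathscr{G}}\left(\int_{\Xi}h\,dP-\int_{\Xi}h\,dQ\right)$, so throughout it suffices to control the signed quantity. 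Also note that every $h\in\mathscr{G}$ has at most linear growth, so $h\in L^1(P)\cap L^1(Q)$ because $P,Q\in\mathscr{P}(\Xi)$ have finite first moments; hence all integrals below are well defined.

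For the bound $d_K(P,Q)\ge\sup_{h\in\mathscr{G}}\left(\int_{\Xi}h\,dP-\int_{\Xi}h\,dQ\right)$ I would argue by weak duality. Let $\pi$ be any joint distribution of $\xi_1$ and $\xi_2$ with marginals $P$ and $Q$, and let $h\in\mathscr{G}$. Since $h$ is $1$-Lipschitz, $h(\xi_1)-h(\xi_2)\le\|\xi_1-\xi_2\|$ pointwise, whence
\[
\int_{\Xi}h\,dP-\int_{\Xi}h\,dQ=\int_{\Xi^2}\bigl(h(\xi_1)-h(\xi_2)\bigr)\pi(d\xi_1,d\xi_2)\le\int_{\Xi^2}\|\xi_1-\xi_2\|\,\pi(d\xi_1,d\xi_2).
\]
Taking the infimum over admissible $\pi$ on the right-hand side and then the supremum over $h\in\mathscr{G}$ on the left-hand side gives $\sup_{h\in\mathscr{G}}\left|\int_{\Xi}h\,dP-\int_{\Xi}h\,dQ\right|\le d_K(P,Q)$.

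For the reverse bound, which is the heart of the matter, I would invoke the Kantorovich duality theorem: since $\Xi$ is a closed polyhedron, hence a Polish space, and the cost $c(\xi_1,\xi_2)=\|\xi_1-\xi_2\|$ is continuous, one has
\[
\inf_{\pi}\int_{\Xi^2}\|\xi_1-\xi_2\|\,\pi(d\xi_1,d\xi_2)=\sup\left\{\int_{\Xi}\varphi\,dP+\int_{\Xi}\psi\,dQ:\ \varphi(\xi_1)+\psi(\xi_2)\le\|\xi_1-\xi_2\|,\ \varphi\in L^1(P),\ \psi\in L^1(Q)\right\}.
\]
It then remains to show that the dual supremum is attained along pairs of the form $(h,-h)$ with $h\in\mathscr{G}$. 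Given an admissible pair $(\varphi,\psi)$, I would set $\psi_1(\cdot):=\inf_{\xi\in\Xi}\bigl(\|\cdot-\xi\|-\varphi(\xi)\bigr)$ and then $\varphi_1(\cdot):=\inf_{\xi\in\Xi}\bigl(\|\cdot-\xi\|-\psi_1(\xi)\bigr)$. The constraint $\varphi(\xi_1)+\psi(\xi_2)\le\|\xi_1-\xi_2\|$ gives $\psi_1\ge\psi$, and $\psi_1$ is an infimum of $1$-Lipschitz functions bounded below, hence $1$-Lipschitz; likewise $\varphi_1\ge\varphi$ and $\varphi_1$ is $1$-Lipschitz, and $\varphi_1(\xi_1)+\psi_1(\xi_2)\le\|\xi_1-\xi_2\|$ still holds, so the objective does not decrease. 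Finally, because $\psi_1$ is itself $1$-Lipschitz one checks $\varphi_1=-\psi_1$: the inequality $\psi_1(\xi)\le\psi_1(\xi')+\|\xi-\xi'\|$ gives $\varphi_1\ge-\psi_1$, while taking $\xi=\cdot$ in the defining infimum gives $\varphi_1\le-\psi_1$. Writing $h:=\varphi_1\in\mathscr{G}$, the dual objective becomes $\int_{\Xi}h\,dP-\int_{\Xi}h\,dQ$, and $h$ being $1$-Lipschitz lies in $L^1(P)\cap L^1(Q)$ automatically. Combining this reduction with the duality theorem and the symmetry of $\mathscr{G}$ yields $d_K(P,Q)=\sup_{h\in\mathscr{G}}\left|\int_{\Xi}h\,dP-\int_{\Xi}h\,dQ\right|$.

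The main obstacle is the reverse bound, and it is concentrated in the Kantorovich duality theorem itself, whose proof rests on a minimax or Fenchel--Rockafellar argument together with some care about measurability and about the dual potentials lying in $L^1$ --- the latter being precisely why we use $P,Q\in\mathscr{P}(\Xi)$ (finite first moments), so that Lipschitz potentials are automatically integrable. The subsequent $c$-transform reduction to the Lipschitz pair $(h,-h)$ is routine once the abstract duality is available. If a self-contained argument is preferred, the same conclusion can be obtained by a Hahn--Banach separation applied to the cone of functions $(\xi_1,\xi_2)\mapsto g(\xi_1)+k(\xi_2)$ in the space of bounded continuous functions on $\Xi^2$ dominated by $c$, followed by the identical Lipschitz reduction; alternatively, one may simply cite this as Kantorovich--Rubinstein duality as in \cite{ChenSunXu2021}.
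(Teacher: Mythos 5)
Your proposal is correct, but it takes a genuinely different route from the paper in the trivial sense that the paper supplies no proof at all: Proposition \ref{def:Kantorovich} is stated with a pointer to \cite{ChenSunXu2021} in the preceding sentence, and no proof environment follows. What you have written is the standard Kantorovich--Rubinstein argument, and it holds up: the weak-duality half is elementary and complete (the pointwise bound $h(\xi_1)-h(\xi_2)\le\|\xi_1-\xi_2\|$ integrated against any coupling, with the absolute value handled by the symmetry $h\in\mathscr{G}\Rightarrow -h\in\mathscr{G}$); the reverse half correctly reduces, via the double $c$-transform $\psi_1=\varphi^{c}$, $\varphi_1=\psi_1^{c}$ and the observation that the $c$-transform of a $1$-Lipschitz function for the cost $c=\|\cdot-\cdot\|$ is its negative, to the general Kantorovich duality theorem, which you invoke as a black box. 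The integrability remarks are the right ones, since $\mathscr{P}(\Xi)$ is defined in the paper to consist of measures with $\mathbb{E}_P[\|\xi\|]<\infty$, so $1$-Lipschitz potentials are automatically in $L^1(P)\cap L^1(Q)$. The only component not proved from first principles is the Kantorovich duality theorem itself, which is of essentially the same depth as the cited result, so in effect your write-up expands the paper's citation into the standard textbook derivation; what it buys is a self-contained justification of why the transportation form of $d_K$ in Definition \ref{def:0928} coincides with the pseudometric form that the paper actually uses (e.g.\ in the Lipschitz estimates of Theorem \ref{the:0927-1}), at the cost of still resting on one deep external theorem.
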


{Given $\mathcal{P},\mathcal{Q}\subset \mathscr{P}(\Xi)$, define the deviation of $\mathcal{P}$ from $\mathcal{Q}$ by $\mathbb{D}_K(\mathcal{P},\mathcal{Q}):=\sup_{P\in \mathcal{P}}$ $\inf_{Q\in \mathcal{Q} } d_K(P,Q)$, and the Hausdorff distance between $\mathcal{P}$ and $\mathcal{Q}$ by $\mathbb{H}_K(\mathcal{P},\mathcal{Q}):=\max \{\mathbb{D}_K(\mathcal{P},\mathcal{Q}),$ $\mathbb{D}_K(\mathcal{Q},\mathcal{P})\}$.}

Given $N$ observations $\{\widehat{\xi}_i\}_{i=1}^N$ of $\xi$, we define the data-driven Wasserstein ambiguity set $\mathcal{Q}$ as a ball centered at the empirical distribution $\widehat{P}_N=\frac{1}{N}\sum_{i=1}^{N}\delta_{\widehat{\xi}_i}$,
%i.e.,the set of all distributions close to the empirical distribution under Kantorovich metric, that is,
\begin{equation}\label{equ:Q}
\mathcal{Q}:=\{P \in \mathscr{P}(\Xi):d_K(P,\widehat{P}_N)\le \epsilon\},
\end{equation}
where $\epsilon$ is a prespecified robust radius. Esfahani and Kuhn \cite{MohajerinEsfahani2017} proved that with any prescribed $\beta \in (0,1)$, by appropriately defining $\epsilon(\beta)$, the true distribution $P$ belongs to $\mathcal{Q}$ with the confidence level $1-\beta$.
%Then it is reasonable to consider the worst-case expectation under the ambiguity set $\mathcal{Q}$.

Under some mild conditions, strong duality results of DRO problems {with Wasserstein ball} have been established in \cite[Theorem 4.2]{MohajerinEsfahani2017},
\cite[Corollary 2]{RuiGao2016} and \cite[Proposition 2]{zhao2018data}.
%For completeness of exposition, we recall the duality result stated in \cite{RuiGao2016} in the Lemma \ref{lemma}.

\begin{lemma}\label{lemma}
{If $\Psi(\xi)$ is proper, continuous,
%and finite valued on $\widehat{\xi_i},i=1,\cdots,N$,
and for some $\zeta\in\Xi$, the growth rate $\kappa:=\limsup_{\|\xi-\zeta\|\to \infty}\frac{\Psi(\xi)-\Psi(\zeta)}{\|\xi-\zeta\|}<\infty$},
then the optimal values of
\begin{equation*}
\sup \limits_{P \in \mathscr{P}(\Xi)}\left\{\int_{\Xi}\Psi(\xi)P(d\xi):d_K(P,\widehat{P}_N)\le \epsilon \right\}
\end{equation*}
and
\begin{equation}\label{1001}
\min_{\lambda\ge 0}\left\{\lambda \epsilon+ \frac{1}{N}\sum_{i=1}^{N}\sup_{\xi \in \Xi}[\Psi(\xi)-\lambda \|\xi-\widehat{\xi}_i\|] \right\}
\end{equation}
are equal. {Moreover, the optimal solution set of \eqref{1001} is nonempty and compact.}
%, and thus the optimal solution of \eqref{1001} can always be obtained.
%If, in addition, there exists $\bar{P}\in\mathscr{P}(\Xi)$ such that $d_K(\bar{P},\widehat{P}_N)< \epsilon$,
\end{lemma}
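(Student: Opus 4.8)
The plan is to recast the inner maximization as a linear program over probability measures, pass to its Lagrangian dual with respect to the single transportation‑budget constraint, and then rule out a duality gap. First I would exploit that $\widehat{P}_N$ is the discrete measure $\frac1N\sum_{i=1}^N\delta_{\widehat{\xi}_i}$: every coupling $\pi$ of a candidate $P$ with $\widehat{P}_N$ disintegrates as $\pi=\frac1N\sum_{i=1}^N\delta_{\widehat{\xi}_i}\otimes P_i$ for conditional measures $P_i\in\mathscr{P}(\Xi)$, and conversely any such family gives a coupling whose first marginal is $\frac1N\sum_i P_i$. Hence $d_K(P,\widehat{P}_N)\le\epsilon$ holds for some $P$ precisely when $\frac1N\sum_i\int_\Xi\|\xi-\widehat{\xi}_i\|\,P_i(d\xi)\le\epsilon$, and
\[
v_P:=\sup\Bigl\{\frac1N\sum_{i=1}^N\int_\Xi\Psi(\xi)\,P_i(d\xi):\ P_i\in\mathscr{P}(\Xi),\ \frac1N\sum_{i=1}^N\int_\Xi\|\xi-\widehat{\xi}_i\|\,P_i(d\xi)\le\epsilon\Bigr\}
\]
equals the first optimal value in the statement.

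Next I would attach a multiplier $\lambda\ge0$ to that scalar constraint. For any feasible family $\{P_i\}$ and any $\lambda\ge0$ the objective is at most $\lambda\epsilon+\frac1N\sum_i\int_\Xi[\Psi(\xi)-\lambda\|\xi-\widehat{\xi}_i\|]\,P_i(d\xi)$, and since the supremum of a linear functional $P\mapsto\int g\,dP$ over $\mathscr{P}(\Xi)$ equals $\sup_{\xi\in\Xi}g(\xi)$ (attained at a Dirac measure when finite), this is $\le\lambda\epsilon+\frac1N\sum_i\sup_{\xi\in\Xi}[\Psi(\xi)-\lambda\|\xi-\widehat{\xi}_i\|]$. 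Taking the supremum over feasible $\{P_i\}$ and then the infimum over $\lambda\ge0$ yields weak duality: $v_P\le v_D$, where $v_D$ denotes the value of \eqref{1001}.

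The substantive part is the reverse inequality together with attainment in \eqref{1001}, and here the growth condition $\kappa<\infty$ is essential. For every $\lambda>\kappa$ the map $\xi\mapsto\Psi(\xi)-\lambda\|\xi-\widehat{\xi}_i\|$ tends to $-\infty$ as $\|\xi\|\to\infty$, so its supremum over $\Xi$ is finite; consequently the objective of \eqref{1001}, being a supremum of affine functions of $\lambda$ plus the affine term $\lambda\epsilon$, is a proper convex lower semicontinuous function on $[0,\infty)$. As each term is bounded below by $\Psi(\widehat{\xi}_i)$, when $\epsilon>0$ this objective is coercive in $\lambda$, so its minimizers form a nonempty compact interval, which is the last assertion (the degenerate case $\epsilon=0$, where $\mathcal{Q}=\{\widehat{P}_N\}$, is immediate and treated separately). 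For $v_D\le v_P$ I would fix a minimizer $\lambda^\star$, choose for each $i$ points $\xi_i^k\in\Xi$ with $\Psi(\xi_i^k)-\lambda^\star\|\xi_i^k-\widehat{\xi}_i\|\uparrow\sup_{\xi\in\Xi}[\Psi(\xi)-\lambda^\star\|\xi-\widehat{\xi}_i\|]$, and test the primal with mixtures $P_i^k=(1-t_i^k)\delta_{\widehat{\xi}_i}+t_i^k\delta_{\xi_i^k}$, the weights $t_i^k\in[0,1]$ tuned so that the aggregate cost $\frac1N\sum_i t_i^k\|\xi_i^k-\widehat{\xi}_i\|$ equals $\min\{\epsilon,\frac1N\sum_i\|\xi_i^k-\widehat{\xi}_i\|\}$; the first‑order optimality of $\lambda^\star$ on $[0,\infty)$ forces the transportation budget to be exhausted in the limit whenever $\lambda^\star>0$ (and makes the $\delta_{\xi_i^k}$ asymptotically optimal for $\Psi$ itself when $\lambda^\star=0$), after which a direct computation gives $\frac1N\sum_i\int_\Xi\Psi\,dP_i^k\to\lambda^\star\epsilon+\frac1N\sum_i\sup_{\xi\in\Xi}[\Psi(\xi)-\lambda^\star\|\xi-\widehat{\xi}_i\|]=v_D$, so $v_P\ge v_D$. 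An alternative to this explicit construction is to invoke a general strong‑duality theorem for generalized moment problems after checking that $\widehat{P}_N$ is a Slater point of the primal (available when $\epsilon>0$) and that the pertinent moment cone is closed. I expect this no‑gap step to be the only genuine obstacle; the disintegration, the dual reformulation, and weak duality are routine.
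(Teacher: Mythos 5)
Your treatment of the second assertion (nonemptiness and compactness of the $\arg\min$ of \eqref{1001}) is correct and is essentially the paper's own argument: the paper also bounds $f_D(\lambda)$ below by $\lambda\epsilon+\frac1N\sum_i\Psi(\widehat{\xi}_i)$ to get level-coercivity, uses the growth condition $\kappa<\infty$ to get properness, and invokes Rockafellar--Wets. For the strong duality itself, however, the paper does \emph{not} give a proof --- it cites \cite[Theorem 1]{RuiGao2016} --- whereas you attempt a self-contained argument, and that is where the gap lies.

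The disintegration of the coupling and the weak-duality inequality $v_P\le v_D$ are fine. The problem is the primal recovery step. Your test measures $P_i^k=(1-t_i^k)\delta_{\widehat{\xi}_i}+t_i^k\delta_{\xi_i^k}$ mix a single near-maximizer with the \emph{center} $\widehat{\xi}_i$, and you assert that first-order optimality of $\lambda^\star$ forces the transportation budget to be exhausted in the limit. Neither piece holds for an \emph{arbitrary} maximizing sequence. The optimality condition $0\in\epsilon+\frac1N\sum_i\partial h_i(\lambda^\star)$ (with $h_i(\lambda)=\sup_\xi[\Psi(\xi)-\lambda\|\xi-\widehat{\xi}_i\|]$) only says that \emph{some} selection of asymptotically active slopes averages to $-\epsilon$; each $\partial h_i(\lambda^\star)$ is an interval, and a maximizing sequence you happen to pick may realize the wrong endpoint. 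Concretely, take $N=1$, $\Xi=[0,\infty)$, $\widehat{\xi}_1=0$, $\epsilon=2$, and $\Psi$ concave piecewise linear with $\Psi(0)=0$, $\Psi(1)=10$, $\Psi(3)=12$, slope $-1$ beyond $3$. Then $\lambda^\star=1$, $v_D=v_P=11$ (attained by $P=\tfrac12\delta_1+\tfrac12\delta_3$), and the maximizers of $\Psi(\xi)-\|\xi\|$ form the set $[1,3]$, which does not contain $\widehat{\xi}_1$. Choosing $\xi^k=1$ gives cost $1<\epsilon$, hence $t^k=1$ and value $10$; choosing $\xi^k=3$ gives $t^k=2/3$ and value $\tfrac13\Psi(0)+\tfrac23\Psi(3)=8$. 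In neither case do you reach $v_D=11$: the budget is not exhausted in the first case, and in the second the mass dumped on $\delta_{\widehat{\xi}_1}$ destroys value because $\widehat{\xi}_1$ is not a near-maximizer. The correct construction must mix, for each $i$, a low-cost and a high-cost near-maximizer of $\Psi(\cdot)-\lambda^\star\|\cdot-\widehat{\xi}_i\|$ selected according to the subdifferential condition (this is what the proof in \cite{RuiGao2016} does). Your fallback --- invoking a general strong-duality theorem for moment problems after checking a Slater condition --- is legitimate and is effectively what the paper does by citation, but as written the explicit construction does not close the gap.
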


\begin{proof}
{The strong duality was established in \cite[Theorem 1]{RuiGao2016}. We only need to prove the nonemptiness and compactness of the optimal solution set of \eqref{1001}.
  Let
  $
  f_D(\lambda)\!:=\!\lambda \epsilon +\! \frac{1}{N}\sum_{i=1}^N\! \sup_{\xi \in \Xi}[\Psi(\xi)-\!\lambda \|\xi-\widehat{\xi}_i\| ].
  $
 It is easy to see that $f_D$ is lower semi-continuous and convex, and $f_D(\lambda)\ge\lambda \epsilon + \frac{1}{N}\sum_{i=1}^N \Psi(\widehat{\xi}_i),\forall \lambda\ge 0$.
  Since $\Psi$ is proper, %$\Psi(\widehat{\xi}_i)>-\infty,i=1,\cdots,N$,
  $f_D$
  is bounded from below on bounded sets and
  $$
  \liminf_{\lambda \to \infty} \frac{f_D(\lambda)}{\lambda}\ge  \liminf_{\lambda \to \infty} \frac{\lambda \epsilon + \frac{1}{N}\sum_{i=1}^N \Psi(\widehat{\xi}_i)}{\lambda}=\epsilon>0.
  $$
  This means that $f_D$ is level-coercive \cite[Definition 3.25]{Rockafellar2009Variational}.
  }

  {
  Next, we prove that $f_D$ is proper by showing $f_D(\kappa)<\infty$ at $\kappa:=\!\limsup\limits_{\|\xi-\zeta\|\to \infty}$ $({\Psi(\xi)-\Psi(\zeta)})/{\|\xi-\zeta\|}$. Consider for each $i=1,\cdots,N$, the optimal value $\vartheta_i^*$ of problem $\sup_{\xi \in \Xi}f_i(\xi)$ $:=\Psi(\xi)-\kappa \|\xi-\widehat{\xi}_i\|$. Assume there exists a sequence $\{\xi_i^{\iota}\}_{\iota=1}^{\infty}\subset \Xi    $ such that $f_i(\xi_i^{\iota}) \to \vartheta_i^*$. We then have two cases.
  Case 1: $\|\xi_i^{\iota}\|\to\infty$. As the growth rate $\kappa$ does not depend on the choice of $\zeta$ \cite[lemma 4]{RuiGao2016}, we can select $\xi=\xi_i^{\iota}$ and $\zeta=\widehat{\xi_i}$, and thus have
  %$\kappa \ge \lim_{{\iota}\to \infty}\frac{\Psi(\xi^{\iota})-\Psi(\widehat{\xi}_i)}{\|\xi^{\iota}-\widehat{\xi}_i\|}$. This implies
  $\vartheta_i^*=\lim_{{\iota}\to \infty}\Psi(\xi_i^{\iota})-\kappa \|\xi_i^{\iota}-\widehat{\xi}_i\| \le \Psi(\widehat{\xi}_i)<\infty $.
  Case 2: $\xi_i^{\iota} \to \xi_i^*$ with $\|\xi_i^*\|<\infty$. %From the polyhedral structure,
  Since $\Xi$ is closed, $\xi_i^*\in\Xi$. By continuity of $\Psi$, we have $\vartheta_i^*=\lim_{{\iota}\to \infty}\Psi(\xi_i^{\iota})-\kappa \|\xi_i^{\iota}-\widehat{\xi}_i\|=\Psi(\xi_i^*)-\kappa \|\xi_i^*-\widehat{\xi}_i\|<\infty$.
  Therefore, $f_D(\kappa)=\kappa \epsilon +\! \frac{1}{N}\sum_{i=1}^N \vartheta_i^*<\infty$ and thus $f_D$ is proper. By \cite[Corollary 3.27]{Rockafellar2009Variational}, $f_{D}$ is level-bounded. Therefore, it is known from \cite[Theorem 1.9]{Rockafellar2009Variational} that $\argmin_{\lambda} f_D(\lambda)$ is nonempty and compact.}
\end{proof}

\subsection{Flowchart of the lower and upper bounds approximation schemes}

Later on, we will derive for problem $(P_{SSD})$ a lower bound approximation in Section \ref{3} and an upper bound approximation in Section \ref{3.2}. The relationship of formulations in intermediate steps of the two approximation schemes is illustrated in Figure \ref{fig:111}.

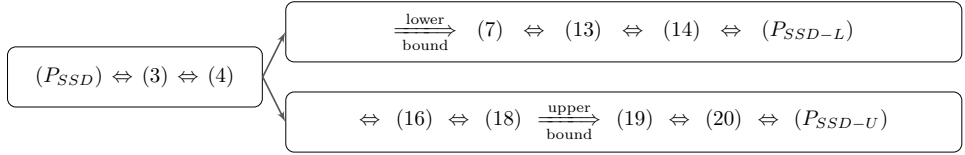
\begin{figure}[h]
\setstretch{1}
\scalebox{0.8}{
\begin{tikzpicture}
\path \rr{1}{
$(P_{SSD})\Leftrightarrow \eqref{shenme}\Leftrightarrow \eqref{shenme2}$ };
\path (p1.east) + (6,0.75) \bb{2}{$ \xLongrightarrow[\text{bound}]{\text{lower}} \eqref{0925} \Leftrightarrow \eqref{flnew} \Leftrightarrow \eqref{problem3-8} \Leftrightarrow (P_{SSD-L})$};
\path (p1.east) + (6,-0.75) \bb{3}{$\Leftrightarrow \eqref{shenme3}  \Leftrightarrow \eqref{problem4-3} \xLongrightarrow[\text{bound}]{\text{upper}} \eqref{222} \Leftrightarrow \eqref{problem4-6} \Leftrightarrow (P_{SSD-U})$};
\path [line] (p1.east) -- node [above] {} (p2.west);
\path [line] (p1.east) -- node [above] {} (p3.west);
\end{tikzpicture}}
\caption{The flowchart %relationship of formulations in intermediate steps
of the two approximation schemes.}
\label{fig:111}
\end{figure}

The key reformulation or approximation steps in the two approximation schemes can be summarized as follows:

1) Reformulations ($P_{SSD}$) $\Leftrightarrow$ \eqref{shenme} and \eqref{shenme} $\Leftrightarrow$ \eqref{shenme2} are due to the definition of distributionally robust SSD.

2) Approximation \eqref{shenme2} $\xLongrightarrow[\text{bound}]{\text{lower}}$ \eqref{0925} comes from the finite sample approximation; Reformulation \eqref{0925} $\Leftrightarrow$ \eqref{flnew} is due to the duality theory of DRO problems with  Wasserstein ball from Lemma \ref{lemma}; Reformulations \eqref{flnew} $\Leftrightarrow$ \eqref{problem3-8} and \eqref{problem3-8} $\Leftrightarrow$ ($P_{SSD-L}$) are obtained by adding auxiliary variables.

3) We propose a split-and-dual decomposition framework for the upper bound approximation. In detail,
%exchange the order of two supremums equivalently in
\eqref{shenme2} $\Leftrightarrow$ \eqref{shenme3} is a rewrite; We split the interval $\mathcal{R}$ into sub-intervals in the reformulation \eqref{shenme3} $\Leftrightarrow$ \eqref{problem4-3}; We exchange the order of the expectation and supremum to derive the upper bound approximation \eqref{problem4-3} $\xLongrightarrow[\text{bound}]{\text{upper}}$ \eqref{222}; Reformulations \eqref{222}$\Leftrightarrow$\eqref{problem4-6} is due to the duality theory of DRO problems with Wasserstein ball from Lemma \ref{lemma}; Reformulation \eqref{problem4-6} $\Leftrightarrow$ ($P_{SSD-U}$) is due to the strong duality of second-order cone programming.

\section{Lower bound approximation of distributionally robust SSD constrained optimization}\label{3}
%\subsection{}\label{subsec:3.1}

%We start with the lower bound approximation.
{In order to tackle the semi-infiniteness arising from the constraints in problem \eqref{shenme2}, we consider the approximation of the sets $\mathcal{Q}$ and $\mathcal{R}$. Let $\Xi_{\mathcal{N}} =\{\bar{\xi_j}\}_{j=1}^{\mathcal{N}}$ be a set of finite samples in $\Xi$ and $\varGamma_{\mathcal{M}}=\{\eta_k\}_{k=1}^{\mathcal{M}}$ be a set of finite samples in $\mathcal{R}=[\mathcal{R}_{{\min}},\mathcal{R}_{{\max}}]$, here $\mathcal{N}$ and $\mathcal{M}$ denote the sample sizes.
We then approximate the ambiguity set $\mathcal{Q}$ by the following Wasserstein ball:
\begin{equation*}
    \mathcal{Q}_{\mathcal{N}}:=\{P \in \mathscr{P}(\Xi_{\mathcal{N}}):d_K(P,\widehat{P}_N)\le \epsilon\}.
\end{equation*}
Different from $\mathcal{Q}$ which covers both discrete and continuous probability measures supported on $\Xi$, $\mathcal{Q}_{\mathcal{N}}$ only contains discrete probability measures supported on $\Xi_{\mathcal{N}}$. It is easy to see $\mathcal{Q}_{\mathcal{N}} \subset \mathcal{Q}$ and $\varGamma_{\mathcal{M}} \subset \mathcal{R}$. Therefore, we have a lower bound approximation of problem \eqref{shenme2}:
\begin{equation}\label{0925}\begin{array}{cl}
\min \limits_{z \in Z}  \quad & f(z)  \\
\quad\text{s.t.} \ \ \quad & \mathbb{E}_{P}[(\eta- z^T \xi)_+-(\eta-z_0^T \xi)_+]\le0,~\forall \eta \in \varGamma_{\mathcal{M}},~\forall P \in \mathcal{Q}_{\mathcal{N}}.
\end{array}\end{equation}
}%

{
In subsection \ref{subsec:3.1}, we establish the quantitative convergency for problem \eqref{0925} in terms of the feasible set, the optimal value, and the optimal solution set. Then in subsection \ref{subsec:3.2}, we show how problem \eqref{0925} can be reformulated as a linear programming problem, and the computational efficiency for large sample sizes can be further improved by the cutting-plane method.}

\subsection{\texorpdfstring{Quantitative analysis of the lower approximation}{ }}\label{subsec:3.1}
%%%%%%%%%%%%%%%%%%%%%%%%%%%%%%%%%%%%
{We denote the feasible sets of problem \eqref{shenme2} and problem \eqref{0925} by $\mathcal{F}$ and $\mathcal{F}_{\mathcal{N},\mathcal{M}}$,
the optimal solution sets by $\mathcal{S}$ and $\mathcal{S}_{\mathcal{N},\mathcal{M}}$, and the optimal values by $v$ and $v_{\mathcal{N},\mathcal{M}}$, respectively. To establish the quantitative convergency for problem \eqref{0925}, we need the following Slater constraint qualification.}
%%%%%%%%%%%%%%%%%%%%%%%%%%%%%%%%%%%assumption-Slater
{
\begin{assumption}\label{ass:0926-Slater}
There exist a point $\bar{z}\in Z$ and a constant $\theta>0$ such that
\begin{equation*}
    \sup_{\eta \in \mathcal{R}} \sup_{P \in \mathcal{Q}}\mathbb{E}_{P}[(\eta-\bar{z}^T\xi)_{+}-(\eta-z_0^T \xi)_{+}] < -\theta.
\end{equation*}
\end{assumption}}
%%%%%%%%%%%%%%%%%%%%%%%%%%%%%%%%%%alpha and gamma
{Let $L:=\sup_{z \in Z}(\|z\|+\|z_0\|)<\infty$, since $Z$ is compact. Denote the Hausdorff distance between $\Xi$ and $
\Xi_{\mathcal{N}}$ by $\alpha_{\mathcal{N}}:=\sup_{\xi \in \Xi} \inf_{\xi'\in \Xi_{\mathcal{N}}}\|\xi- \xi' \|$, and the Hausdorff distance between $\mathcal{R}$ and $\varGamma_{\mathcal{M}}$ by $\gamma_{\mathcal{M}}:=\sup_{\eta \in \mathcal{R}} \inf_{\eta'\in\varGamma_{\mathcal{M}}}|\eta-\eta'|$.}
%%%%%%%%%%%%%%%%%%%%%%%%%%%%%%%%%assumption alpha and gamma
{
\begin{assumption}\label{ass:0926-alpha-gamma}
$\lim_{\mathcal{N}\to \infty}\alpha_{\mathcal{N}}=0 $ and $\lim_{\mathcal{M}\to \infty}\gamma_{\mathcal{M}}=0$.
\end{assumption}}
%%%%%%%%%%%%%%%%%%%%%%%%%%%%%%%%%theorem
\begin{theorem}\label{the:0927-1}
{
Given Assumptions \ref{ass:0926-Slater} and \ref{ass:0926-alpha-gamma}, the following assertions hold.}

{
\emph{(i)} For any $\mathcal{N}$ and $\mathcal{M}$,
\begin{equation*}
    \mathbb{H}(\mathcal{F}_{\mathcal{N},\mathcal{M}},\mathcal{F})\le \frac{2D_Z}{\theta}(L\alpha_{\mathcal{N}}+\gamma_{\mathcal{M}}
),
\end{equation*}
where $D_Z$ denotes the diameter of $Z$ and $\theta$ is defined as in Assumption \ref{ass:0926-Slater}.}

{
\emph{(ii)}
  $\lim_{\mathcal{N} \to \infty, \atop \mathcal{M} \to \infty} v_{\mathcal{N},\mathcal{M}}=v$ and $\limsup_{\mathcal{N} \to \infty, \atop \mathcal{M} \to \infty} \mathcal{S}_{\mathcal{N},\mathcal{M}} \subset \mathcal{S}$.}

{
\emph{(iii)} If, in addition, the objective function $f$ is Lipschitz continuous with modulus $L_f$, then for any $\mathcal{N}$ and $\mathcal{M}$,
\begin{equation*}
    |v_{\mathcal{N},\mathcal{M}}-v|\le \frac{2D_ZL_f}{\theta}(L\alpha_{\mathcal{N}}+\gamma_{\mathcal{M}}).
\end{equation*}
Moreover, if problem \eqref{shenme2} satisfies the second-order growth condition at the optimal solution set $\mathcal{S}$, i.e., there exists a positive constant $\rho$ such that
$$
    f(z)-v\ge \rho d(z,\mathcal{S})^2,~\forall z\in \mathcal{F},
$$
then for sufficiently large $\mathcal{N}$ and $\mathcal{M}$,
\begin{equation}\label{equ:0927-1}
    \mathbb{D}(\mathcal{S}_{\mathcal{N},\mathcal{M}},\mathcal{S}) \le \Big(\sqrt{\frac{2D_Z}{\theta}}+\sqrt{\frac{4L_fD_Z}{\rho\theta}} \Big) \sqrt{L\alpha_{\mathcal{N}}+\gamma_{\mathcal{M}}}.
\end{equation}
}
\end{theorem}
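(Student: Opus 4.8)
The plan is to recast both problems through their (convex) constraint functions, prove one uniform estimate on how much the sampling perturbs that constraint, and feed it into a Robinson-type error bound coming from the Slater condition in Assumption \ref{ass:0926-Slater}; parts (ii) and (iii) then follow softly. For $z\in Z$ set
\[
G(z):=\sup_{\eta\in\mathcal{R}}\sup_{P\in\mathcal{Q}}\mathbb{E}_{P}\big[(\eta-z^{T}\xi)_{+}-(\eta-z_{0}^{T}\xi)_{+}\big],
\]
and let $G_{\mathcal{N},\mathcal{M}}$ be the same quantity with $\mathcal{R}$ replaced by $\varGamma_{\mathcal{M}}$ and $\mathcal{Q}$ by $\mathcal{Q}_{\mathcal{N}}$, so $\mathcal{F}=\{z\in Z:G(z)\le 0\}$ and $\mathcal{F}_{\mathcal{N},\mathcal{M}}=\{z\in Z:G_{\mathcal{N},\mathcal{M}}(z)\le 0\}$. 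Since $(\eta-z^{T}\xi)_{+}$ is a pointwise supremum of affine functions of $z$, both $G$ and $G_{\mathcal{N},\mathcal{M}}$ are convex on $Z$; the bound $\sup_{P\in\mathcal{Q}}\mathbb{E}_{P}[\|\xi\|]\le\frac1N\sum_{i=1}^{N}\|\widehat{\xi}_{i}\|+\epsilon<\infty$ (from the Lipschitz form of $d_{K}$ in Proposition \ref{def:Kantorovich}) shows $G$ is finite and Lipschitz on $Z$, so $\mathcal{F}$ and $\mathcal{F}_{\mathcal{N},\mathcal{M}}$ are closed, hence compact as subsets of the compact $Z$, and $\mathcal{F}\ni\bar z$ is nonempty. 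Because $\mathcal{Q}_{\mathcal{N}}\subset\mathcal{Q}$ and $\varGamma_{\mathcal{M}}\subset\mathcal{R}$ we have $G_{\mathcal{N},\mathcal{M}}\le G$, hence $\mathcal{F}\subset\mathcal{F}_{\mathcal{N},\mathcal{M}}$ and $\mathbb{D}(\mathcal{F},\mathcal{F}_{\mathcal{N},\mathcal{M}})=0$; so (i) reduces to bounding $\mathbb{D}(\mathcal{F}_{\mathcal{N},\mathcal{M}},\mathcal{F})$. Granting the key claim $\Delta_{\mathcal{N},\mathcal{M}}:=\sup_{z\in Z}(G(z)-G_{\mathcal{N},\mathcal{M}}(z))\le 2L\alpha_{\mathcal{N}}+\gamma_{\mathcal{M}}$, take $z\in\mathcal{F}_{\mathcal{N},\mathcal{M}}$, put $\delta:=G(z)_{+}\le\Delta_{\mathcal{N},\mathcal{M}}$ and $z_{t}:=(1-t)z+t\bar z\in Z$ with $t=\delta/(\delta+\theta)$; convexity of $G$ and $G(\bar z)<-\theta$ give $G(z_{t})\le(1-t)\delta-t\theta=0$, so $z_{t}\in\mathcal{F}$ and $d(z,\mathcal{F})\le t\|z-\bar z\|\le(\delta/\theta)D_{Z}\le(2D_{Z}/\theta)(L\alpha_{\mathcal{N}}+\gamma_{\mathcal{M}})$, which is (i) after taking the supremum over $z$.

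To prove the key claim I would, for fixed $z$, estimate in two steps. First, for fixed $P$ the map $\eta\mapsto(\eta-z^{T}\xi)_{+}-(\eta-z_{0}^{T}\xi)_{+}$ has derivative in $\{-1,0,1\}$, hence is $1$-Lipschitz in $\eta$ uniformly in $\xi$, so replacing $\mathcal{R}$ by $\varGamma_{\mathcal{M}}$ costs at most $\gamma_{\mathcal{M}}$ and leaves a fixed $\eta\in\varGamma_{\mathcal{M}}$ and the comparison of $\sup_{P\in\mathcal{Q}}$ with $\sup_{P\in\mathcal{Q}_{\mathcal{N}}}$. Second, for that $\eta$ write $\psi(\xi):=(\eta-z^{T}\xi)_{+}-(\eta-z_{0}^{T}\xi)_{+}$, which is Lipschitz in $\xi$ with modulus at most $\|z\|+\|z_{0}\|\le L$, and apply the Wasserstein duality of Lemma \ref{lemma} to both suprema (over the supports $\Xi$ and $\Xi_{\mathcal{N}}$); since every point of $\Xi$ lies within $\alpha_{\mathcal{N}}$ of $\Xi_{\mathcal{N}}$, the inner suprema $\sup_{\xi\in\Xi}[\psi(\xi)-\lambda\|\xi-\widehat{\xi}_{i}\|]$ and $\sup_{\xi\in\Xi_{\mathcal{N}}}[\psi(\xi)-\lambda\|\xi-\widehat{\xi}_{i}\|]$ differ by at most $(L+\lambda)\alpha_{\mathcal{N}}$, and plugging an optimal multiplier $\lambda^{\ast}$ of the $\mathcal{Q}_{\mathcal{N}}$-dual into the $\mathcal{Q}$-dual gives $\sup_{P\in\mathcal{Q}}\mathbb{E}_{P}[\psi]\le\sup_{P\in\mathcal{Q}_{\mathcal{N}}}\mathbb{E}_{P}[\psi]+(L+\lambda^{\ast})\alpha_{\mathcal{N}}$. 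Finally, for $\lambda>L$ the inner supremum is attained at $\widehat{\xi}_{i}$ (the objective being decreasing away from it) and the dual objective is increasing in $\lambda$, so one may take $\lambda^{\ast}\le L$; hence $(L+\lambda^{\ast})\alpha_{\mathcal{N}}\le 2L\alpha_{\mathcal{N}}$ and, with the first step, $\Delta_{\mathcal{N},\mathcal{M}}\le 2L\alpha_{\mathcal{N}}+\gamma_{\mathcal{M}}$.

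For (ii): always $v_{\mathcal{N},\mathcal{M}}\le v$ since $\mathcal{F}\subset\mathcal{F}_{\mathcal{N},\mathcal{M}}$; conversely pick $z_{\mathcal{N},\mathcal{M}}\in\mathcal{S}_{\mathcal{N},\mathcal{M}}$ (nonempty by compactness of $\mathcal{F}_{\mathcal{N},\mathcal{M}}$ and continuity of $f$), use (i) to obtain $z'_{\mathcal{N},\mathcal{M}}\in\mathcal{F}$ with $\|z_{\mathcal{N},\mathcal{M}}-z'_{\mathcal{N},\mathcal{M}}\|\le(2D_{Z}/\theta)(L\alpha_{\mathcal{N}}+\gamma_{\mathcal{M}})\to0$ under Assumption \ref{ass:0926-alpha-gamma}, and apply uniform continuity of $f$ on $Z$ to get $\liminf v_{\mathcal{N},\mathcal{M}}=\liminf f(z_{\mathcal{N},\mathcal{M}})\ge v$; combined with $v_{\mathcal{N},\mathcal{M}}\le v$ this gives $v_{\mathcal{N},\mathcal{M}}\to v$, and any cluster point $z^{\ast}$ of a selection from $\mathcal{S}_{\mathcal{N},\mathcal{M}}$ satisfies $d(z^{\ast},\mathcal{F})=0$ (so $z^{\ast}\in\mathcal{F}$) and $f(z^{\ast})=v$, i.e.\ $z^{\ast}\in\mathcal{S}$. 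For (iii), the value bound is immediate: $v\le f(z'_{\mathcal{N},\mathcal{M}})\le f(z_{\mathcal{N},\mathcal{M}})+L_{f}\|z_{\mathcal{N},\mathcal{M}}-z'_{\mathcal{N},\mathcal{M}}\|\le v_{\mathcal{N},\mathcal{M}}+(2D_{Z}L_{f}/\theta)(L\alpha_{\mathcal{N}}+\gamma_{\mathcal{M}})$, which with $v_{\mathcal{N},\mathcal{M}}\le v$ yields the claimed inequality. For the solution-set estimate set $\omega:=L\alpha_{\mathcal{N}}+\gamma_{\mathcal{M}}$ and $\beta:=(2D_{Z}/\theta)\omega$; for $z_{\mathcal{N},\mathcal{M}}\in\mathcal{S}_{\mathcal{N},\mathcal{M}}$ take $z'\in\mathcal{F}$ with $\|z_{\mathcal{N},\mathcal{M}}-z'\|\le\beta$, so $f(z')-v\le L_{f}\beta$ and the second-order growth condition gives $d(z',\mathcal{S})\le\sqrt{L_{f}\beta/\rho}$, hence $d(z_{\mathcal{N},\mathcal{M}},\mathcal{S})\le\beta+\sqrt{L_{f}\beta/\rho}$; for $\mathcal{N},\mathcal{M}$ large enough that $\beta\le 1$ one has $\beta\le\sqrt{\beta}=\sqrt{2D_{Z}/\theta}\,\sqrt{\omega}$ and $\sqrt{L_{f}\beta/\rho}=\sqrt{2L_{f}D_{Z}/(\rho\theta)}\,\sqrt{\omega}\le\sqrt{4L_{f}D_{Z}/(\rho\theta)}\,\sqrt{\omega}$, and the supremum over $z_{\mathcal{N},\mathcal{M}}$ gives \eqref{equ:0927-1}.

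The main obstacle is the uniform estimate $\Delta_{\mathcal{N},\mathcal{M}}\le 2L\alpha_{\mathcal{N}}+\gamma_{\mathcal{M}}$, and within it the discretization of the ambiguity set: a measure in $\mathcal{Q}$ cannot in general be pushed forward onto $\Xi_{\mathcal{N}}$ without leaving the radius-$\epsilon$ ball, so the comparison must be carried out on the Wasserstein dual rather than on the primal measures, which requires the dual representation of Lemma \ref{lemma} to remain valid over the finite support $\Xi_{\mathcal{N}}$ (for which it is cleanest to have $\{\widehat{\xi}_{i}\}_{i=1}^{N}\subset\Xi_{\mathcal{N}}$, or at least $\mathcal{Q}_{\mathcal{N}}\neq\emptyset$) and the optimal multiplier to be controlled a priori by $L$. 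Once (i) is in hand, the convergence of optimal values, the inclusion of accumulation points of optimal solutions, and the second-order-growth error bound are all routine.
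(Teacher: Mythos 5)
Your proposal is correct and reaches every stated bound with the right constants, and its overall architecture is the same as the paper's: a uniform estimate on the perturbation of the constraint function, fed into a Slater-based error bound for the feasible sets, then transferred to values and solutions via Lipschitz continuity and second-order growth. The differences are in how the sub-steps are discharged, and they are worth noting. For the discretization of the ambiguity set, the paper combines the pseudo-metric form of $d_K$ (Proposition \ref{def:Kantorovich}) with the cited estimate $\mathbb{H}_K(\mathcal{Q},\mathcal{Q}_{\mathcal{N}})\le 2\alpha_{\mathcal{N}}$ from \cite[Theorem 2]{ChenSunXu2021}, whereas you work entirely on the Wasserstein dual of Lemma \ref{lemma}, using the a priori bound $\lambda^{\ast}\le L$ on the optimal multiplier to get $\sup_{P\in\mathcal{Q}}\mathbb{E}_P[\psi]\le\sup_{P\in\mathcal{Q}_{\mathcal{N}}}\mathbb{E}_P[\psi]+2L\alpha_{\mathcal{N}}$; your route is self-contained and correctly sidesteps the pushforward difficulty you identify, at the cost of needing strong duality over the finite support $\Xi_{\mathcal{N}}$ (which holds, with $\mathcal{Q}_{\mathcal{N}}\neq\emptyset$ guaranteed when $\{\widehat{\xi}_i\}\subset\Xi_{\mathcal{N}}$). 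You also prove the Robinson error bound directly by interpolating with the Slater point $\bar z$ rather than citing \cite{Robinson1975}, you exploit $\mathcal{F}\subset\mathcal{F}_{\mathcal{N},\mathcal{M}}$ to dispose of one of the two deviations that the paper bounds separately, and in part (ii) you replace the paper's epi-convergence argument (\cite[Theorem 7.33]{Rockafellar2009Variational}) with an elementary compactness/uniform-continuity argument; for the solution-set estimate in (iii) your nearest-feasible-point argument is a streamlined version of the paper's projection argument and lands on the identical constant $\bigl(\sqrt{2D_Z/\theta}+\sqrt{4L_fD_Z/(\rho\theta)}\bigr)$. Two minor remarks: your $1$-Lipschitz constant in $\eta$ is sharper than the paper's modulus $2$ (both suffice for the stated bound), and the claim $\lambda^{\ast}\le L$ should be stated for the $\Xi_{\mathcal{N}}$-dual, which again uses $\widehat{\xi}_i\in\Xi_{\mathcal{N}}$; neither affects correctness.
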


\begin{proof}
{
  (i) We write $\phi(\eta,z,\xi)=(\eta-z^T\xi)_{+}-(\eta-z_0^T \xi)_{+}$, $w(z)=\sup_{\eta \in \mathcal{R}} \sup_{P \in \mathcal{Q}}$ $\mathbb{E}_{P}[\phi(\eta,z,\xi)]$, and $w_{\mathcal{N},\mathcal{M}}(z)=\sup_{\eta' \in \varGamma_{\mathcal{M}}}\! \sup_{P' \in \mathcal{Q}_{\mathcal{N}}}\!\mathbb{E}_{P'}[\phi(\eta',z,\xi)]$. Since $\mathcal{Q}_{\mathcal{N}} \subset \mathcal{Q}$ and $\varGamma_{\mathcal{M}} \subset \mathcal{R}$, then for any $z \in Z$,
  \begin{equation}\label{equ:0926-1}
   w_{\mathcal{N},\mathcal{M}}(z) \le w(z).
  \end{equation}
  One can observe that $\phi$ is uniformly Lipschitz continuous w.r.t. $\xi$ with modulus $L$ and also uniformly Lipschitz continuous w.r.t. $\eta$ with modulus $2$. Then by Proposition \ref{def:Kantorovich}, we have
\begin{flalign*}
  &|\mathbb{E}_{P}[\phi(\eta,z,\xi)]-\mathbb{E}_{P'}[\phi(\eta',z,\xi)]|\\
\le~ &|\mathbb{E}_{P}[\phi(\eta,z,\xi)]-\mathbb{E}_{P'}[\phi(\eta,z,\xi)]|+ |\mathbb{E}_{P'}[\phi(\eta,z,\xi)]-\mathbb{E}_{P'}[\phi(\eta',z,\xi)]|\\
\le~ & Ld_K(P,P')+ 2|\eta-\eta'|.
\end{flalign*}
Therefore, for any $z \in Z$,
\begin{flalign}
 & w(z)-w_{\mathcal{N},\mathcal{M}}(z)
%=\sup_{\eta \in \mathcal{R}} \inf_{\eta'\in \varGamma_{\mathcal{M}}} \left\{\sup_{P \in \mathcal{Q}}\mathbb{E}_{P}[\phi(\eta,z,\xi)]-\sup_{P' \in \mathcal{Q}_{\mathcal{N}}}\mathbb{E}_{P'}[\phi(\eta',z,\xi)] \right\} \nonumber\\
 \le  \sup_{\eta \in \mathcal{R}} \inf_{\eta'\in \varGamma_{\mathcal{M}}}
\sup_{P \in \mathcal{Q}} \inf_{P' \in \mathcal{Q}_{\mathcal{N}}}
\Big(
\mathbb{E}_{P}[\phi(\eta,z,\xi)]-\mathbb{E}_{P'}[\phi(\eta',z,\xi)]
\Big) \nonumber \\
\le & \sup_{\eta \in \mathcal{R}} \inf_{\eta'\in \varGamma_{\mathcal{M}}}
\sup_{P \in \mathcal{Q}} \inf_{P' \in \mathcal{Q}_{\mathcal{N}}}
\Big(
Ld_K(P,P')+ 2|\eta-\eta'|
\Big) \label{equ:0926-2}\\
= & \sup_{P \in \mathcal{Q}} \inf_{P' \in \mathcal{Q}_{\mathcal{N}}} Ld_K(P,P')+\sup_{\eta \in \mathcal{R}} \inf_{\eta'\in \varGamma_{\mathcal{M}}}2|\eta-\eta'|
=   L\mathbb{D}_K(\mathcal{Q}, \mathcal{Q}_{\mathcal{N}})+2\gamma_{\mathcal{M}}. \nonumber
\end{flalign}
By \cite[Theorem 2]{ChenSunXu2021}, we have $\mathbb{H}_K(\mathcal{Q}, \mathcal{Q}_{\mathcal{N}})\le 2\alpha_{\mathcal{N}}$.
Thus, combining inequalities \eqref{equ:0926-1} and \eqref{equ:0926-2} gives
%we obtain$  \sup_{z \in Z}|w(z)-w_{\mathcal{N},\mathcal{M}}(z)|. $ Therefore,
\begin{equation*} %\label{equ:0926-3}
  \sup_{z \in Z}|w(z)-w_{\mathcal{N},\mathcal{M}}(z)|\le L \mathbb{H}_K(\mathcal{Q}, \mathcal{Q}_{\mathcal{N}})
  +2\gamma_{\mathcal{M}} \le
  2L \alpha_{\mathcal{N}}
  +2\gamma_{\mathcal{M}}.
\end{equation*}
}

{With Assumption \ref{ass:0926-Slater}, we can apply Robinson's error bound for convex inequality system \cite{Robinson1975} and obtain $d(z,\mathcal{F})\le \frac{D_Z}{\theta}[w(z)]_+,~\forall z\in Z$. On one hand, for any $z\in \mathcal{F}_{\mathcal{N},\mathcal{M}}$, we have
\begin{flalign*}
d(z,\mathcal{F})&\le \frac{D_Z}{\theta}[w(z)]_+
\le \frac{D_Z}{\theta}(|w(z)-w_{\mathcal{N},\mathcal{M}}(z)|+[w_{\mathcal{N},\mathcal{M}}(z)]_+) \\
& = \frac{D_Z}{\theta}|w(z)-w_{\mathcal{N},\mathcal{M}}(z)|
\le \frac{2D_Z}{\theta}(L \alpha_{\mathcal{N}} +\gamma_{\mathcal{M}}).
\end{flalign*}
This implies
\begin{equation}\label{equ:0926-4}
    \mathbb{D}(\mathcal{F}_{\mathcal{N},\mathcal{M}},\mathcal{F})\le \frac{2D_Z}{\theta}(L \alpha_{\mathcal{N}} +\gamma_{\mathcal{M}}).
\end{equation}
Since $w_{\mathcal{N},\mathcal{M}}(z) \le w(z),\forall z\in Z$, Assumption \ref{ass:0926-Slater} implies $w_{\mathcal{N},\mathcal{M}}(\bar{z})<-\theta$. This means that the convex inequality constraint $w_{\mathcal{N},\mathcal{M}}(z)\le 0$ also satisfies the Slater constraint qualification.
%that is, $\bar{z}$ is an interior point of $\mathcal{F}_{\mathcal{N},\mathcal{M}}$.
%On the other hand, from inequality \eqref{equ:0926-3} and Assumption \ref{ass:0926-alpha-gamma} we know that $w_{\mathcal{N},\mathcal{M}}$ uniformly converges to $w$. Hence for sufficiently large $\mathcal{N}$ and $\mathcal{M}$, $w_{\mathcal{N},\mathcal{M}}(\bar{z})<-\theta/2$, where $\bar{z}$ is an interior point of $\mathcal{F}$ as defined in Assumption \ref{ass:0926-Slater}. This means that the convex inequality $w_{\mathcal{N},\mathcal{M}}(z)\le 0$ also satisfies the Slater constraint qualification condition. Likewise, for any $z\in \mathcal{F}$, we have
On the other hand, for any $z\in \mathcal{F}$, we have
\begin{equation*}
    d(z,\mathcal{F}_{\mathcal{N},\mathcal{M}}) \le \frac{D_Z}{\theta}[w_{\mathcal{N},\mathcal{M}}(z)]_+ \le
 \frac{D_Z}{\theta}|w_{\mathcal{N},\mathcal{M}}(z)-w(z)|
\le \frac{2D_Z}{\theta}(L \alpha_{\mathcal{N}} +\gamma_{\mathcal{M}}),
\end{equation*}
which implies
\begin{equation}\label{equ:0926-5}
    \mathbb{D}(\mathcal{F},\mathcal{F}_{\mathcal{N},\mathcal{M}})\le \frac{2D_Z}{\theta}(L \alpha_{\mathcal{N}} +\gamma_{\mathcal{M}}).
\end{equation}
Inequalities \eqref{equ:0926-4} and \eqref{equ:0926-5} mean conclusion (i).
%$$
%\mathbb{H}(\mathcal{F}_{\mathcal{N},\mathcal{M}},\mathcal{F})\le \frac{4D_Z}{\theta}(L \alpha_{\mathcal{N}} +\gamma_{\mathcal{M}}).
%$$
}

(ii) Let $\bar{f}(z)=f(z)+\delta_{\mathcal{F}}(z)$ and
  $\bar{f}_{\mathcal{N},\mathcal{M}}(z)=f(z)+\delta_{\mathcal{F}_{\mathcal{N},\mathcal{M}}}(z)$.
  Since $\mathcal{F}_{\mathcal{N},\mathcal{M}}$ converges to $\mathcal{F}$, then by \cite[Proposition 7.4(f)]{Rockafellar2009Variational}, $\delta_{\mathcal{F}_{\mathcal{N},\mathcal{M}}}$ epi-converges to $\delta_{\mathcal{F}}$ as $\mathcal{N} \to \infty,\mathcal{M} \to \infty$. As $f$ is continuous and finite, we obtain by \cite[Exercise 7.8]{Rockafellar2009Variational} that
  $\bar{f}_{\mathcal{N},\mathcal{M}}=f+\delta_{\mathcal{F}_{\mathcal{N},\mathcal{M}}}$ epi-converges to $\bar{f}=f+\delta_{\mathcal{F}}$
  when $\mathcal{N} \to \infty,\mathcal{M} \to \infty$. As $\mathcal{F},~\mathcal{F}_{\mathcal{N},\mathcal{M}}$ are closed and $f$ is continuous, $\bar{f}_{\mathcal{N},\mathcal{M}}$ and $\bar{f}$ are lower semi-continuous. Moreover, since $\bar{f}_{\mathcal{N},\mathcal{M}}$ and $\bar{f}$ are proper, it can then be deduced from \cite[Theorem 7.33]{Rockafellar2009Variational} that
  $v=\lim_{\mathcal{N} \to \infty\atop\mathcal{M} \to \infty} v_{\mathcal{N},\mathcal{M}}$ and
  $\limsup_{\mathcal{N} \to \infty \atop \mathcal{M} \to \infty} \mathcal{S}_{\mathcal{N},\mathcal{M}}\subset\mathcal{S}$.

{
(iii) Let $z\in \mathcal{S}$ and $z_{\mathcal{N},\mathcal{M}} \in \mathcal{S}_{\mathcal{N},\mathcal{M}}$. By definition of $\mathbb{D}(\mathcal{F}_{\mathcal{N},\mathcal{M}},\mathcal{F})$, $d(z_{\mathcal{N},\mathcal{M}},\mathcal{F})\le \mathbb{D}(\mathcal{F}_{\mathcal{N},\mathcal{M}},\mathcal{F})$. Then there exists a $z'\in\mathcal{F}$ such that $\|z_{\mathcal{N},\mathcal{M}}- z'\| \le \mathbb{D}(\mathcal{F}_{\mathcal{N},\mathcal{M}},\mathcal{F})$. From the Lipschitz continuity of $f$, we have
\begin{flalign*}
v=&f(z)\le f(z')
\le |f(z')-f(z_{\mathcal{N},\mathcal{M}})|+f(z_{\mathcal{N},\mathcal{M}}) \\ \le & L_f\|z'-z_{\mathcal{N},\mathcal{M}} \|+f(z_{\mathcal{N},\mathcal{M}})
\le  L_f\mathbb{D}(\mathcal{F}_{\mathcal{N},\mathcal{M}},\mathcal{F}) + v_{\mathcal{N},\mathcal{M}}.
\end{flalign*}
Exchanging the roles of $z_{\mathcal{N},\mathcal{M}}$ and $z$, we obtain $v_{\mathcal{N},\mathcal{M}}\le L_f\mathbb{D}(\mathcal{F},\mathcal{F}_{\mathcal{N},\mathcal{M}})+ v$. Applying conclusion (i), for any $\mathcal{N}$ and $\mathcal{M}$, we have
$$
|v_{\mathcal{N},\mathcal{M}}-v| \le L_f \mathbb{H}(\mathcal{F}_{\mathcal{N},\mathcal{M}},\mathcal{F})
\le \frac{2D_Z L_f}{\theta}(L\alpha_{\mathcal{N}}+\gamma_{\mathcal{M}}).
$$
}

{
Now, we show \eqref{equ:0927-1}. Let $z\in \mathcal{S}$ and $z_{\mathcal{N},\mathcal{M}} \in \mathcal{S}_{\mathcal{N},\mathcal{M}}$. Denote by $\Pi_A(x)\in \arginf_{s\in A}\|x-s\|$ the projection of a point $x$ on a set $A$. If problem \eqref{shenme2} satisfies the second order growth condition, then
\begin{flalign*}
 f(z_{\mathcal{N},\mathcal{M}})-f(\Pi_{\mathcal{F}}(z_{\mathcal{N},\mathcal{M}}))
 &=f(z_{\mathcal{N},\mathcal{M}})-f(z)-\big(f(\Pi_{\mathcal{F}}(z_{\mathcal{N},\mathcal{M}}))-f(z)\big) \\
 &\le f(\Pi_{\mathcal{F}_{\mathcal{N},\mathcal{M}}}(z))-f(z)-\rho d(\Pi_{\mathcal{F}}(z_{\mathcal{N},\mathcal{M}}),\mathcal{S})^2.
\end{flalign*}
Since $f$ is Lipschitz continuous, we have
$$
d(\Pi_{\mathcal{F}}(z_{\mathcal{N},\mathcal{M}}),\mathcal{S})\le \sqrt{(L_f/\rho)(\|\Pi_{\mathcal{F}_{\mathcal{N},\mathcal{M}}}(z) -z \|+\|\Pi_{\mathcal{F}}(z_{\mathcal{N},\mathcal{M}})-z_{\mathcal{N},\mathcal{M}} \|)}.
$$
By triangle inequality and the definition of projection $\Pi$, we get
\begin{flalign*}
& d(z_{\mathcal{N},\mathcal{M}}, \mathcal{S})
 \le \|z_{\mathcal{N},\mathcal{M}}-\Pi_{\mathcal{F}}(z_{\mathcal{N},\mathcal{M}}) \|+d(\Pi_{\mathcal{F}}(z_{\mathcal{N},\mathcal{M}}),\mathcal{S}) \\
 \le~ & \|z_{\mathcal{N},\mathcal{M}}-\Pi_{\mathcal{F}}(z_{\mathcal{N},\mathcal{M}}) \|+\sqrt{(L_f/\rho)\big(\|\Pi_{\mathcal{F}_{\mathcal{N},\mathcal{M}}}(z) -z \|+\|\Pi_{\mathcal{F}}(z_{\mathcal{N},\mathcal{M}})-z_{\mathcal{N},\mathcal{M}} \| \big)} \\
 = ~ & d(z_{\mathcal{N},\mathcal{M}},\mathcal{F})+\sqrt{(L_f/\rho)\big(d(z,\mathcal{F}_{\mathcal{N},\mathcal{M}}
)+d(z_{\mathcal{N},\mathcal{M}},\mathcal{F})\big)} \\
\le ~& \mathbb{H}(\mathcal{F}_{\mathcal{N},\mathcal{M}},\mathcal{F})
+\sqrt{(2L_f/\rho)\mathbb{H}(\mathcal{F}_{\mathcal{N},\mathcal{M}},\mathcal{F})}
\le  \big(1+\sqrt{{2L_f}/{\rho}} \big)\sqrt{\mathbb{H}(\mathcal{F}_{\mathcal{N},\mathcal{M}},\mathcal{F})},
\end{flalign*}
where the last inequality holds as $\mathbb{H}(\mathcal{F}_{\mathcal{N},\mathcal{M}},\mathcal{F})\le \sqrt{\mathbb{H}(\mathcal{F}_{\mathcal{N},\mathcal{M}},\mathcal{F})}$ for sufficiently large $\mathcal{N}$ and $\mathcal{M}$. Since $z_{\mathcal{N},\mathcal{M}}$ is arbitrarily chosen from $\mathcal{S}_{\mathcal{N},\mathcal{M}}$, then \eqref{equ:0927-1} follows from (i).}
\end{proof}
%%%%%%%%%%%%%%%%%%%%%%%%%%%%%%%%%%%%%

Theorem \ref{the:0927-1} establishes the quantitative convergency for problem \eqref{0925} in the sense of the feasible set, the optimal value and the optimal solution set. For the case that the support set $\Xi$ is finite, the lower bound approximation \eqref{0925} is tight.

\begin{corollary}
If $\Xi$ is a finite set, $\Xi_{\mathcal{N}}=\Xi$, %~\mathcal{M}=\mathcal{N},~
and $\varGamma_{\mathcal{M}} = \{z_0^T\xi\mid\xi \in \Xi \}$, the optimal values of problem \eqref{shenme2} and problem \eqref{0925} are equal.
\end{corollary}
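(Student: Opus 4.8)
The plan is to show that, under the stated hypotheses, problems \eqref{shenme2} and \eqref{0925} have \emph{identical} feasible regions; since the two problems share the objective $f$ and the decision set $Z$, equality of optimal values follows at once. Two elementary identifications are all that is needed. First, I would argue that $\mathcal{Q}_{\mathcal{N}}=\mathcal{Q}$: when $\Xi$ is finite, every $P\in\mathscr{P}(\Xi)$ is a discrete measure carried by (a subset of) the finitely many atoms of $\Xi$, so $\mathscr{P}(\Xi_{\mathcal{N}})=\mathscr{P}(\Xi)$ once $\Xi_{\mathcal{N}}=\Xi$; moreover the Kantorovich distance $d_K(P,\widehat{P}_N)$ between two measures supported on $\Xi_{\mathcal{N}}$ is attained by couplings supported on $\Xi_{\mathcal{N}}^{2}$, so its value is unchanged when $P$ is regarded as an element of $\mathscr{P}(\Xi)$, and the two $\epsilon$-balls coincide. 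Second, $\varGamma_{\mathcal{M}}=\{z_0^T\xi\mid\xi\in\Xi\}=z_0^T\Xi=\mathcal{R}$ directly from the definition $\mathcal{R}:=z_0^T\Xi$.

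With these in hand, the constraint system of \eqref{0925} --- namely $\mathbb{E}_{P}[(\eta-z^T\xi)_+-(\eta-z_0^T\xi)_+]\le 0$ for every $\eta\in\varGamma_{\mathcal{M}}$ and every $P\in\mathcal{Q}_{\mathcal{N}}$ --- is literally the constraint system of \eqref{shenme2}, so $\mathcal{F}_{\mathcal{N},\mathcal{M}}=\mathcal{F}$ and hence $v_{\mathcal{N},\mathcal{M}}=v$. If one instead keeps the convention $\mathcal{R}=[\mathcal{R}_{\min},\mathcal{R}_{\max}]$ and reads \eqref{shenme2} with $\eta$ ranging over that whole interval, the second identification above is replaced by the equivalence of \eqref{equ:1} and \eqref{equ:2}: for any fixed $P$ the realizations of $z_0^T\xi$ lie in $z_0^T(\operatorname{supp}P)\subseteq z_0^T\Xi=\varGamma_{\mathcal{M}}$, so requiring the dominance inequality for all $\eta\in\varGamma_{\mathcal{M}}$ already forces it for all $\eta\in\mathbb{R}$, while the converse inclusion is trivial since $\varGamma_{\mathcal{M}}\subseteq\mathcal{R}$. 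In either reading the feasible sets match.

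I do not anticipate a genuine obstacle here: the whole content is the bookkeeping identification $\mathcal{Q}_{\mathcal{N}}=\mathcal{Q}$ and $\varGamma_{\mathcal{M}}=\mathcal{R}$, and the only point worth a sentence of care is that passing to discrete measures on the finite set $\Xi_{\mathcal{N}}$ loses nothing when $\Xi$ is already finite, together with reconciling the interval convention for $\mathcal{R}$ with the finite-support case. As a cross-check, the conclusion also drops out of Theorem \ref{the:0927-1}(i): here $\alpha_{\mathcal{N}}=0$ and $\gamma_{\mathcal{M}}=0$, so the stated bound gives $\mathbb{H}(\mathcal{F}_{\mathcal{N},\mathcal{M}},\mathcal{F})=0$ and therefore $v_{\mathcal{N},\mathcal{M}}=v$ --- but that route additionally invokes the Slater condition of Assumption \ref{ass:0926-Slater}, which the direct argument above does not require.
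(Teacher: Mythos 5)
Your argument is correct and follows essentially the same route as the paper: the paper's one-line proof cites \cite[Proposition 3.2]{D1}, which is precisely the reduction of the $\eta$-range to the realizations of the benchmark $z_0^T\xi$ that you carry out via the equivalence of \eqref{equ:1} and \eqref{equ:2}, while the identification $\mathcal{Q}_{\mathcal{N}}=\mathcal{Q}$ for finite $\Xi$ that you spell out is left implicit there. The only blemish is your closing cross-check: under the interval convention $\mathcal{R}=[\mathcal{R}_{\min},\mathcal{R}_{\max}]$ the quantity $\gamma_{\mathcal{M}}=\sup_{\eta\in\mathcal{R}}\inf_{\eta'\in\varGamma_{\mathcal{M}}}|\eta-\eta'|$ need not vanish for a finite $\varGamma_{\mathcal{M}}$, so Theorem \ref{the:0927-1}(i) does not directly yield the corollary (and would need Assumption \ref{ass:0926-Slater} anyway), but this aside does not affect your main argument.
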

\begin{proof}
The conclusion follows from \cite[Proposition 3.2]{D1}.
\end{proof}

\subsection{\texorpdfstring{Tractability of the lower bound approximation problem \eqref{0925}}{ }}\label{subsec:3.2}
{Recall that $\Xi_{\mathcal{N}} =\{\bar{\xi_j}\}_{j=1}^{\mathcal{N}}$ and $\varGamma_{\mathcal{M}}=\{\eta_k\}_{k=1}^{\mathcal{M}}$, problem \eqref{0925} can be rewritten as
\begin{flalign*}
&
&\min \limits_{z \in Z}  \quad &  f(z)& \\
& &\quad\text{s.t.} \,\quad & \sup_{P \in \mathcal{Q}_{\mathcal{N}}}\mathbb{E}_{P}[(\eta_k-z^T\xi)_{+}-(\eta_k-z_0^T \xi)_{+}] \le 0,~k=1,\cdots,\mathcal{M}. &
\end{flalign*}
}%
Then by Lemma \ref{lemma}, its optimal value is equal to that of
\begin{equation}\label{flnew}
    \begin{array}{cl}
\min \limits_{z \in Z,\lambda \in \mathbb{R}_+^{\mathcal{M}}}  & f(z) \\
 \text{s.t.}  & \lambda_k \epsilon- \frac{1}{N}\sum\limits_{i=1}^{N}\min\limits_{1\le j\le \mathcal{N}} \left[\lambda_k \|\bar{\xi_j}-\widehat{\xi}_i\|-(\eta_k-z^T\bar{\xi_j})_{+}+(\eta_k-z_0^T \bar{\xi_j})_{+} \right] \le 0,  \\
& \qquad\quad\quad\qquad\qquad\qquad\qquad\qquad~\qquad\qquad\qquad~\qquad\qquad\qquad~\quad\ ~k=1,\cdots,\mathcal{M}.
%&  z \in Z,\lambda \in \mathbb{R}_+^{\mathcal{M}}.\nonumber \quad \qquad
    \end{array}
\end{equation}
By introducing auxiliary variables $\beta_{ik},~i=1,\cdots,N,k=1,\cdots,\mathcal{M}$,
%(which can be written for simplicity as a matrix $\beta \in \mathbb{R}^{N \times \mathcal{M}}$),
problem \eqref{flnew} can be reformulated as
\begin{subequations}\label{problem3-8}
\begin{eqnarray}
&\min\limits_{z ,\lambda ,  \beta}   & f(z) \label{12}\\
& \text{s.t.}
&\lambda_k \epsilon- \frac{1}{N}\sum_{i=1}^{N}\beta_{ik} \le 0, ~ k=1,\cdots,\mathcal{M},\label{32add}\\
&& \beta_{ik} \le \lambda_k \|\bar{\xi_j}-\widehat{\xi}_i\|-(\eta_k-z^T\bar{\xi_j})_{+}
 +(\eta_k-z_0^T \bar{\xi_j})_{+} ,  \label{cococa}\\
&& \ \ \qquad\ i=1, \cdots, N,~ j=1,\cdots,\mathcal{N},~ k=1,\cdots,\mathcal{M}, \nonumber\\
&& z \in Z,\lambda \in \mathbb{R}_+^{\mathcal{M}}, \beta \in \mathbb{R}^{N \times \mathcal{M}}.\label{32}
\end{eqnarray}
\end{subequations}
Problem \eqref{problem3-8} is equivalent to problem \eqref{0925}, and thus is a lower bound approximation of problem ($P_{SSD}$).

By introducing auxiliary variables $s_{jk},~j=1,\cdots,\mathcal{N},k=1,\cdots,\mathcal{M}$, to handle $(\eta_k-z^T\bar{\xi_j})_+$ (refer to \cite{D1} (3.10)-(3.12)), we have a linear programming reformulation of problem \eqref{problem3-8}
\begin{flalign*}
&
&\min \limits_{z ,\lambda,  \beta ,s} \quad&  f(z) &\\
& &\text{s.t.} \ \quad &\lambda_k \epsilon- \sum_{i=1}^{N}\frac{1}{N}\beta_{ik} \le 0, ~k=1,\cdots,\mathcal{M}, & \\
(P_{SSD-L})& & &\beta_{ik}+s_{jk} \le \lambda_k \|\bar{\xi_j}-\widehat{\xi}_i\|
 +(\eta_k-z_0^T \bar{\xi_j})_{+} , &\\
& & & \qquad\ \ i=1, \cdots, N,~ j=1,\cdots,\mathcal{N},~ k=1,\cdots,\mathcal{M}, &\\
& & &s_{jk}\ge \eta_k-z^T\bar{\xi_j},~ j=1,\cdots,\mathcal{N},~ k=1,\cdots,\mathcal{M},& \\
&&&z \in Z,\lambda \in \mathbb{R}_+^{\mathcal{M}},  \beta \in \mathbb{R}^{N \times \mathcal{M}},s \in\mathbb{R}_+^{\mathcal{N} \times \mathcal{M}}.&
\end{flalign*}
%Problem ($P_{SSD-L}$) is equivalent to problem \eqref{0925}, and thus is a lower bound approximation of problem ($P_{SSD}$).
%Problem ($P_{SSD-L}$) is a linear programming problem as terms $\|\bar{\xi_j}-\widehat{\xi}_i\|$ and $(\eta_k-z_0^T \bar{\xi_j})_{+}$ are constants with given samples. }
%Problem ($P_{SSD-L}$) can be solved directly using many off-the-shelf optimization softwares.}
%However, if $\mathcal{N}$ and $\mathcal{M}$ are large, solving problem ($P_{SSD-L}$) by optimization solvers may be time-consuming.}
In fact, the dimension of $s$ is $\mathcal{N} \times \mathcal{M}$ and the number of constraints in problem ($P_{SSD-L}$) is $\mathcal{M}+ N \times \mathcal{N} \times \mathcal{M} + \mathcal{N} \times \mathcal{M}$. {Thus the size of problem ($P_{SSD-L}$) increases rapidly with the increase of the sample sizes $\mathcal{N}$ and $\mathcal{M}$.}

\begin{algorithm}[ht]
%\setstretch{1}
\caption{Cutting-plane Method}
\label{alg:11}
\begin{algorithmic}
\STATE \textbf{Start from} ${\iota}=1$ and $\mathcal{J}_1^{\iota}=\mathcal{J}_2^{\iota}=\emptyset$.
\WHILE{${\iota} \ge 1$}
\STATE Solve the approximate problem:
\begin{flalign}
&
&\min \limits_{z ,\lambda,  \beta ,s} \ &  f(z) &\nonumber\\
& &\text{s.t.} \ \
%&\lambda_k \epsilon- \sum_{i=1}^{N}\frac{1}{N}\beta_{ik} \le 0,   k=1,\cdots,\mathcal{M}, &  \nonumber\\
%& &
&\beta_{ik}+s_{jk} \le \lambda_k \|\bar{\xi_j}-\widehat{\xi}_i\|
 +(\eta_k-z_0^T \bar{\xi_j})_{+} , i=1, \cdots, N, j \in \mathcal{J}_1^{{\iota}},k \in \mathcal{J}_2^{{\iota}},& \nonumber\\
& & &s_{jk}\ge \eta_k-z^T\bar{\xi_j}, j\in \mathcal{J}_1^{{\iota}}, k \in \mathcal{J}_2^{l{\iota}},&  \label{approximate}\\
&&& \eqref{32add},~z \in Z,\lambda \in \mathbb{R}_+^{\mathcal{M}},  \beta \in \mathbb{R}^{N \times \mathcal{M}},s \in\mathbb{R}_+^{\mathcal{N} \times \mathcal{M}}.& \nonumber
\end{flalign}
\STATE Let $(z^{{\iota}},\lambda^{{\iota}}, \beta^{{\iota}},s^{{\iota}})$ denote the optimal solution of problem \eqref{approximate}.
\STATE Calculate
%\begin{footnotesize}
\begin{equation*}
\begin{split}
  &\delta^{{\iota}}:= \max \limits_{i \in \{1,\cdots,N\}, j\in \{1,\cdots,\mathcal{N}\},\atop k\in \{1,\cdots,\mathcal{M}\}}
  \left\{
   \beta_{ik}^{{\iota}}
  - \lambda_k^{{\iota}} \|\bar{\xi_j}-\widehat{\xi}_i\|
  + (\eta_k-(z^{{\iota}})^T\bar{\xi_j})_{+}
  - (\eta_k-z_0^T \bar{\xi_j})_{+} \right\}.
  \end{split}
\end{equation*}
%\end{footnotesize}
\IF{$\delta^{{\iota}} \le 0$}
\STATE \text{Stop}.\\
\ELSE{}
\STATE Determine
%\begin{scriptsize}
\begin{equation*}
\begin{split}
  &(i^{\iota},j^{\iota},k^{\iota})\in \\
  &\argmax \limits_{i \in \{1,\cdots,N\},j\in \{1,\cdots,\mathcal{N}\},\atop k\in \{1,\cdots,\mathcal{M}\}}
  \left\{
  \beta_{ik}^{{\iota}}
  - \lambda_k^{{\iota}} \|\bar{\xi_j}-\widehat{\xi}_i\|
  + (\eta_k-(z^{{\iota}})^T\bar{\xi_j})_{+}
  - (\eta_k-z_0^T \bar{\xi_j})_{+}
  \right\}.
  \end{split}
\end{equation*}
%\end{scriptsize}
\STATE Let $\mathcal{J}_1^{{\iota}+1} = \mathcal{J}_1^{{\iota}} \cup j^{\iota}$, $\mathcal{J}_2^{{\iota}+1} = \mathcal{J}_2^{{\iota}} \cup k^{\iota}$ and ${\iota} \leftarrow {\iota}+1$.
\ENDIF
\ENDWHILE
\end{algorithmic}
\end{algorithm}

In order to numerically solve problem \eqref{problem3-8} for large $\mathcal{N},\mathcal{M}$, we propose a cutting-plane method, see Algorithm \ref{alg:11}.
{At each iteration of the cutting-plane method, we solve problem \eqref{approximate}, a relaxation of problem \eqref{problem3-8}.} After solving \eqref{approximate}, we check whether all the constraints in \eqref{cococa} are satisfied or not. If all the constraints in \eqref{cococa} are satisfied, then the optimal solution we find for problem \eqref{approximate} is also optimal for problem \eqref{problem3-8}. {Otherwise, we add the violated constraint to the approximate problem \eqref{approximate} at the next iteration.}

\begin{proposition}
  Algorithm \ref{alg:11} stops at the optimal value and optimal solution of problem \eqref{problem3-8} within finite steps.
\end{proposition}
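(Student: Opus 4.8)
The plan is the classic cutting-plane correctness-plus-finiteness argument, the only subtlety being how the auxiliary variables $s$ encode constraint \eqref{cococa}. First I would observe that, for each $\iota$, problem \eqref{approximate} is a relaxation of \eqref{problem3-8}: eliminating $s$ shows its $(z,\lambda,\beta)$-feasible set is cut out by \eqref{32add}, the domain constraints \eqref{32}, and \eqref{cococa} restricted to $j\in\mathcal J_1^{\iota}$, $k\in\mathcal J_2^{\iota}$, hence contains the feasible set of \eqref{problem3-8} (which is nonempty, e.g.\ $z=z_0$). Since $Z$ is compact and $f$ continuous, \eqref{approximate} is solvable and $f(z^{\iota})\le v^\ast$, where $v^\ast$ denotes the optimal value of \eqref{problem3-8}. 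The one fact I would single out is elementary: $s_{jk}\ge 0$ together with $s_{jk}\ge\eta_k-z^T\bar{\xi_j}$ give $s_{jk}\ge(\eta_k-z^T\bar{\xi_j})_+$, so any point feasible for \eqref{approximate} satisfies $\beta_{ik}-\lambda_k\|\bar{\xi_j}-\widehat{\xi}_i\|+(\eta_k-z^T\bar{\xi_j})_+-(\eta_k-z_0^T\bar{\xi_j})_+\le 0$ for every $i$ and every $(j,k)\in\mathcal J_1^{\iota}\times\mathcal J_2^{\iota}$.

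Correctness at termination is then immediate: if $\delta^{\iota}\le 0$, the definition of $\delta^{\iota}$ says $(z^{\iota},\lambda^{\iota},\beta^{\iota})$ satisfies \eqref{cococa} for all $i,j,k$, and it satisfies \eqref{32add} and \eqref{32} because these belong to \eqref{approximate}; hence it is feasible for \eqref{problem3-8}, so $f(z^{\iota})\ge v^\ast$, and combined with $f(z^{\iota})\le v^\ast$ we obtain $f(z^{\iota})=v^\ast$ with $(z^{\iota},\lambda^{\iota},\beta^{\iota})$ optimal for \eqref{problem3-8}.

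For finiteness, suppose iteration $\iota$ does not stop, i.e.\ $\delta^{\iota}>0$ with maximizer $(i^{\iota},j^{\iota},k^{\iota})$. I claim $j^{\iota}\notin\mathcal J_1^{\iota}$ or $k^{\iota}\notin\mathcal J_2^{\iota}$: otherwise $(j^{\iota},k^{\iota})\in\mathcal J_1^{\iota}\times\mathcal J_2^{\iota}$, and the inequality isolated above, applied to the feasible point $(z^{\iota},\lambda^{\iota},\beta^{\iota},s^{\iota})$ at indices $(i^{\iota},j^{\iota},k^{\iota})$, reads precisely $\delta^{\iota}\le 0$ --- a contradiction. Therefore each non-terminating iteration strictly increases $|\mathcal J_1^{\iota}|+|\mathcal J_2^{\iota}|$, which starts at $0$ and never exceeds $\mathcal N+\mathcal M$ since $\mathcal J_1^{\iota}\subseteq\{1,\dots,\mathcal N\}$ and $\mathcal J_2^{\iota}\subseteq\{1,\dots,\mathcal M\}$; hence $\delta^{\iota}\le 0$ is reached after at most $\mathcal N+\mathcal M$ non-terminating iterations, and together with the correctness step the proposition follows. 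I do not expect a genuine obstacle here; the only point I would take care to write out is the bookkeeping that the index pair appended at each step is genuinely new, which is exactly what the observation about $s$ guarantees.
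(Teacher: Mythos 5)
Your proof is correct and follows essentially the same route as the paper's: the paper's (very terse) proof rests on exactly your key observation that each non-terminating iteration strictly enlarges $\mathcal{J}_1^{\iota}$ or $\mathcal{J}_2^{\iota}$, which it justifies via the same fact that $s_{jk}\ge(\eta_k-z^T\bar{\xi_j})_+$ forces \eqref{cococa} to hold for indices already in the cut sets. Your write-up merely spells out the relaxation/termination bookkeeping that the paper leaves implicit.
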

\begin{proof}
It is easy to see that $\mathcal{J}_1^{\iota} \subsetneqq \mathcal{J}_1^{{\iota}+1}$ or $\mathcal{J}_2^{{\iota}} \subsetneqq \mathcal{J}_2^{{\iota}+1}$.
\begin{comment}
We claim that $\mathcal{J}_1^{l} \subsetneqq \mathcal{J}_1^{l+1}$ or $\mathcal{J}_2^{l} \subsetneqq \mathcal{J}_2^{l+1}$. In fact, since $(z^{l},\lambda^{l}, \beta^{l},s^{l})$ is an optimal solution of problem \eqref{approximate}, we immediately have that
\begin{eqnarray*}
 & \beta_{ik}^{l} + s_{jk}^{l} - \lambda_k^{l} \|\bar{\xi_j}-\widehat{\xi}_i\|
 -(\eta_k-z_0^T \bar{\xi_j})_{+} \le 0, i=1, \cdots, N, j \in \mathcal{J}_1^{l},k \in \mathcal{J}_2^{l},& \\
 & s_{jk}^{l} \ge \eta_k-(z^l)^T\bar{\xi_j}, ~s_{jk}^{l} \ge 0, j \in \mathcal{J}_1^{l},k \in \mathcal{J}_2^{l}.&
\end{eqnarray*}
This implies that
\begin{flalign*}
  & \beta_{ik}^{l}
  - \lambda_k^{l} \|\bar{\xi_j}-\widehat{\xi}_i\|
  + (\eta_k-(z^{l})^T\bar{\xi_j})_{+}
  - (\eta_k-z_0^T \bar{\xi_j})_{+} \\
  \le ~ & \beta_{ik}^{l}
  - \lambda_k^{l} \|\bar{\xi_j}-\widehat{\xi}_i\|
  + s_{jk}^{l}
  - (\eta_k-z_0^T \bar{\xi_j})_{+} \le 0, i=1, \cdots, N, j \in \mathcal{J}_1^{l},k \in \mathcal{J}_2^{l}.
\end{flalign*}
On the other hand, $(i^l,j^l,k^l)$ is chosen such that for $(i,j,k)=(i^l,j^l,k^l)$, it holds that
\begin{equation*}
  \beta_{ik}^{l}
  - \lambda_k^{l} \|\bar{\xi_j}-\widehat{\xi}_i\|
  + (\eta_k-(z^{l})^T\bar{\xi_j})_{+}
  - (\eta_k-z_0^T \bar{\xi_j})_{+} > 0.
\end{equation*}
Therefore, $j^{l} \notin \mathcal{J}_1^{l}$ or $k^{l} \notin \mathcal{J}_2^{l}$. But $j^l \in \mathcal{J}_1^{l+1}$ and $k^{l} \in \mathcal{J}_2^{l+1}$. This tells us that $\mathcal{J}_1^{l} \subsetneqq \mathcal{J}_1^{l+1}$ or $\mathcal{J}_2^{l} \subsetneqq \mathcal{J}_2^{l+1}$.

\end{comment}
As the possible number of constraints that can be added is finite, Algorithm \ref{alg:11} must stop at the optimal value and optimal solution of problem \eqref{problem3-8} within finite steps.
\end{proof}

To conclude this section, we develop a lower bound approximation  \eqref{0925} for the distributionally robust {SSD} constrained optimization problem ($P_{SSD}$). Problem \eqref{0925} can be reformulated as problem \eqref{problem3-8}, which can be easily solved using linear programming formulation ($P_{SSD-L}$) or by Algorithm \ref{alg:11}.

\section{Upper bound approximation of distributionally robust SSD constrained problem}\label{3.2}
%We derive an upper bound approximation for problem \eqref{shenme2} in this section.

Notice that problem \eqref{shenme2} can be rewritten as
\begin{equation}\label{shenme3}\begin{array}{cl}
\min \limits_{z \in Z}  \quad &  f(z) \\
\quad\text{s.t.} \ \ \quad &\sup \limits_{P \in \mathcal{Q}}\sup \limits_{\eta \in \mathcal{R}} \mathbb{E}_{P}[(\eta-z^T\xi)_{+}-(\eta-z_0^T \xi)_{+}] \le 0.
\end{array}\end{equation}
{If we exchange the order of operators $\sup_{\eta \in \mathcal{R}}$ and $\mathbb{E}_{P}$ in problem \eqref{shenme3}, we obtain an upper bound approximation for problem \eqref{shenme3}. However, such an upper bound approximation might be loose {or even infeasible} since %for each $P \in \mathcal{Q}$,
the gap
\begin{equation}\label{equ:gap}
 \mathbb{E}_{P}\left[ \sup_{\eta \in \mathcal{R}}\{(\eta-z^T\xi)_{+}-(\eta-z_0^T \xi)_{+}\} \right]
-
\sup_{\eta \in \mathcal{R}} \mathbb{E}_{P}[(\eta-z^T\xi)_{+}-(\eta-z_0^T \xi)_{+}]
\end{equation}
might be large. This is because we determine an $\eta$ for all possible $\xi$'s in the latter supremum in \eqref{equ:gap}, while we determine an $\eta$ for each realization of $\xi$ in the former supremum in \eqref{equ:gap}. The larger the range $\mathcal{R}$ of $\eta$, the larger the gap in \eqref{equ:gap}. As an extreme case, when $\mathcal{R}$ reduces to a singleton, the gap \eqref{equ:gap} becomes $0$. This observation motivates us to divide $\mathcal{R}$ into small sub-intervals, and exchange the order of the expectation operator and the supremum over each sub-interval, which provides an upper bound approximation of the sub-problem %taking supremum %
in the sub-interval.
Summing all sub-problems in all sub-intervals, we obtain an improved upper bound approximation of problem \eqref{shenme3}. We name such a bounding method a split-and-dual framework.}

In detail, we divide $\mathcal{R}=[\mathcal{R}_{{\min}},\mathcal{R}_{{\max}}]$ into $\mathcal{K}$ intervals with disjoint interiors, $[\uline{\eta}_{k},\bar{\eta}_{k}]$, $k=1,\cdots,\mathcal{K}$, where the boundary points of the intervals are specified by $\uline{\eta}_{k}=\mathcal{R}_{{\min}}+(k-1)\frac{\mathcal{R}_{{\max}}-\mathcal{R}_{{\min}}}{\mathcal{K}}$,
$\bar{\eta}_{k}=\mathcal{R}_{{\min}}+k\frac{\mathcal{R}_{{\max}}-\mathcal{R}_{{\min}}}{\mathcal{K}}$, $k=1,\cdots,\mathcal{K}$. Notice that problem \eqref{shenme3} can also be reformulated as
\begin{flalign*}
&
&\min \limits_{z \in Z}  \quad & f(z)  &\\
%\sup_{P \in \mathcal{Q}} \mathbb{E}_{P}[-z^T \xi]& \\
& &\quad\text{s.t.} \,\ \ & \max_{1\le k\le\mathcal{K}}\sup_{\eta \in [\uline{\eta}_{k},\bar{\eta}_{k}]}\sup_{P \in \mathcal{Q}}\mathbb{E}_{P}[(\eta-z^T\xi)_{+}-(\eta-z_0^T \xi)_{+}] \le 0, &
\end{flalign*}
or, equivalently,
\begin{subequations}\label{problem4-3}
\begin{eqnarray}
&\min \limits_{z \in Z}   & f(z)  \label{equ:fl1}\\
 &\quad\text{s.t.} \,\,\ & \sup_{P \in \mathcal{Q}}\sup_{\eta \in [\uline{\eta}_{k},\bar{\eta}_{k}]}\mathbb{E}_{P}[(\eta-z^T\xi)_{+}-(\eta-z_0^T \xi)_{+}] \le 0,~k=1,\cdots,\mathcal{K}.\label{equ:fl2}
\end{eqnarray}
\end{subequations}
Exchanging the order of operators $\sup_{\eta \in [\uline{\eta}_{k},\bar{\eta}_{k}]}$ and $\mathbb{E}_P$ in \eqref{equ:fl2}, we have the following approximation problem
\begin{equation}\label{222}\begin{array}{cl}
\min \limits_{z \in Z}\, \  & f(z) \\
%\sup_{P \in \mathcal{Q}} \mathbb{E}_{P}[-z^T \xi]& \\
\quad\text{s.t.}\ \ \ & \sup \limits_{P \in \mathcal{Q}}\mathbb{E}_{P}\Big[\sup \limits_{\eta \in [\uline{\eta}_{k},\bar{\eta}_{k}] }\{(\eta-z^T\xi)_{+}-(\eta-z_0^T \xi)_{+}\}\Big] \le 0,~k=1,\cdots,\mathcal{K}.
\end{array}\end{equation}
The feasible set of problem \eqref{problem4-3} contains that of problem \eqref{222}.
Thus problem \eqref{222} provides an upper bound approximation for problem \eqref{problem4-3}.

%\subsection{\texorpdfstring{Asymptotic tightness of upper bound approximation}{ }}
\subsection{Quantitative analysis of the upper approximation}
%\eqref{equ:fl1}-\eqref{equ:fl2} $\xLongrightarrow[\text{bound}]{\text{upper}}$ \eqref{222}}{ }}
In what follows, we show that when the interval number $\mathcal{K}$ goes to infinity,
%the optimal value of problem ($P_{SSD-U}$) converges to that of problem \eqref{equ:fl1}-\eqref{equ:fl2}. Since problem ($P_{SSD-U}$) is a reformulation of problem \eqref{222}, as illustrated in Figure \ref{fig:111}, we next prove that
the optimal value of problem \eqref{222} converges to that of \eqref{problem4-3}. To this end, we first prove the convergence from
$$
g(z,\mathcal{K}):=\max_{1\le k \le \mathcal{K}} \sup_{P \in \mathcal{Q}}\mathbb{E}_{P}
         \Big[\sup_{\eta \in [\uline{\eta}_{k},\bar{\eta}_{k}] }\left\{(\eta-z^T\xi)_{+}-(\eta-z_0^T \xi)_{+}\right\}\Big]
$$
to
\begin{equation*}
\begin{split}
g(z):
&=\sup_{P \in \mathcal{Q}}\sup_{\eta \in \mathcal{R}} \mathbb{E}_{P}\left[(\eta-z^T\xi)_{+}-(\eta-z_0^T \xi)_{+}\right] \\
&= \max_{1\le k \le \mathcal{K}} \sup_{P \in \mathcal{Q}}\sup_{\eta \in [\uline{\eta}_{k},\bar{\eta}_{k}]}\mathbb{E}_{P}\left[(\eta-z^T\xi)_{+}-(\eta-z_0^T \xi)_{+}\right].
\end{split}
\end{equation*}
As $\cup_{k=1,\ldots,\mathcal{K}}[\uline{\eta}_{k},\bar{\eta}_{k}]=\mathcal{R}$, the function $g(z)$ does not depend on the splitting of $\mathcal{R}$.
%
%We also notice from \cite[page 164]{refnew} that $g(\cdot,\mathcal{K})$ and $g(\cdot)$ are Lipschitz continuous with respect to $z$.
\begin{proposition}\label{pro:lip}
Given Assumption \ref{ass:Xi}, for any positive integer $\mathcal{K}$, $g(\cdot,\mathcal{K})$ and $g(\cdot)$ are Lipschitz continuous {with modulus $\mathcal{C}=\sup_{P \in \mathcal{Q}}\mathbb{E}_P[\|\xi\|]< \infty$}.
\end{proposition}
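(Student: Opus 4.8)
The plan is to isolate one elementary pointwise Lipschitz estimate for the integrand and then propagate it through all the outer operators, each of which is monotone and subadditive in the sense that $\sup_t a(t)-\sup_t b(t)\le\sup_t\big(a(t)-b(t)\big)$ and analogously for $\max_k$ and for $\mathbb{E}_P$. For fixed $\xi\in\Xi$ the term $(\eta-z_0^T\xi)_+$ is independent of $z$, while $z\mapsto(\eta-z^T\xi)_+$ is nonexpansive after composing with the affine map $z\mapsto\eta-z^T\xi$, so that for all $z_1,z_2\in Z$ and any range of $\eta$ contained in $\mathcal{R}$ one has
\begin{equation*}
\sup_{\eta}\big\{(\eta-z_1^T\xi)_+-(\eta-z_0^T\xi)_+\big\}-\sup_{\eta}\big\{(\eta-z_2^T\xi)_+-(\eta-z_0^T\xi)_+\big\}\le\sup_{\eta}\big\{(\eta-z_1^T\xi)_+-(\eta-z_2^T\xi)_+\big\}\le(z_2^T\xi-z_1^T\xi)_+\le\|z_1-z_2\|\,\|\xi\|,
\end{equation*}
where the suprema over $\eta$ range over whichever of $[\uline{\eta}_k,\bar{\eta}_k]$ or $\mathcal{R}$ is under consideration, and the middle inequality comes from enlarging that range to all of $\mathbb{R}$ together with the one-line identity $\sup_{\eta\in\mathbb{R}}\{(\eta-a)_+-(\eta-b)_+\}=(b-a)_+$. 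Assumption \ref{ass:Xi} enters only to guarantee that $\mathcal{R}$ is a compact interval, so that these suprema and their restrictions to $[\uline{\eta}_k,\bar{\eta}_k]$ are well posed.

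Next I would verify $\mathcal{C}:=\sup_{P\in\mathcal{Q}}\mathbb{E}_P[\|\xi\|]<\infty$. Since $\|\cdot\|$ is Lipschitz on $\Xi$ with modulus $1$ by the reverse triangle inequality, Proposition \ref{def:Kantorovich} gives $\mathbb{E}_P[\|\xi\|]-\mathbb{E}_{\widehat P_N}[\|\xi\|]\le d_K(P,\widehat P_N)\le\epsilon$ for every $P\in\mathcal{Q}$; each $\mathbb{E}_P[\|\xi\|]$ is finite by the definition of $\mathscr{P}(\Xi)$, so the subtraction is legitimate and $\mathcal{C}\le\frac1N\sum_{i=1}^N\|\widehat\xi_i\|+\epsilon<\infty$.

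Then for $g$ I would apply subadditivity of $\sup$ over $P$ and over $\eta$, then linearity and monotonicity of $\mathbb{E}_P$, and finally the displayed estimate with $\eta$ held fixed:
\begin{equation*}
g(z_1)-g(z_2)\le\sup_{P\in\mathcal{Q}}\sup_{\eta\in\mathcal{R}}\Big(\mathbb{E}_P\big[(\eta-z_1^T\xi)_+-(\eta-z_0^T\xi)_+\big]-\mathbb{E}_P\big[(\eta-z_2^T\xi)_+-(\eta-z_0^T\xi)_+\big]\Big)\le\sup_{P\in\mathcal{Q}}\mathbb{E}_P\big[\|z_1-z_2\|\,\|\xi\|\big]=\mathcal{C}\|z_1-z_2\|,
\end{equation*}
and exchanging $z_1$ and $z_2$ yields $|g(z_1)-g(z_2)|\le\mathcal{C}\|z_1-z_2\|$. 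For $g(\cdot,\mathcal{K})$ the chain is identical with one extra outer operator $\max_{1\le k\le\mathcal{K}}$ (again subadditive) and with the supremum over $\eta$ now sitting inside the expectation: apply $\max_k$-subadditivity, then $\sup_P$-subadditivity, then monotonicity of $\mathbb{E}_P$, and finally the displayed pointwise estimate for $\sup_{\eta\in[\uline{\eta}_k,\bar{\eta}_k]}$ inside the integrand; this bounds $g(z_1,\mathcal{K})-g(z_2,\mathcal{K})$ by $\mathcal{C}\|z_1-z_2\|$, and symmetry finishes the proof.

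I expect the only delicate point to be the uniform finiteness of $\mathcal{C}$ over the whole Wasserstein ball; the rest is bookkeeping in commuting $\le$ through $\sup$, $\max$ and $\mathbb{E}_P$, where the one thing to watch is keeping the $z_0$-dependent term attached to each supremand so that it visibly cancels and never contributes to the modulus.
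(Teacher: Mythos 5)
Your argument is correct. Every step checks out: the identity $\sup_{\eta\in\mathbb{R}}\{(\eta-a)_+-(\eta-b)_+\}=(b-a)_+$ is right (both cases $a\le b$ and $a>b$ verify), the chain of subadditivity inequalities $\sup a-\sup b\le\sup(a-b)$ propagates correctly through $\max_k$, $\sup_P$, $\sup_\eta$ and the (monotone, linear) expectation, Cauchy--Schwarz gives $(z_2^T\xi-z_1^T\xi)_+\le\|z_1-z_2\|\,\|\xi\|$, and your finiteness argument for $\mathcal{C}$ via the Kantorovich--Rubinstein form of the metric (Proposition \ref{def:Kantorovich} applied to the $1$-Lipschitz function $\|\cdot\|$) is exactly what is needed and is the one point where something could have gone wrong.

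Where you differ from the paper: the paper does not prove this proposition at all --- it disposes of it with a one-line citation to page 164 of a reference, presumably invoking a known Lipschitz-propagation result for expectations of max-type functions. Your proof is therefore genuinely self-contained where the paper's is not, and it has the additional merit of making visible exactly where each hypothesis is used: Assumption \ref{ass:Xi} only to make $\mathcal{R}$ a compact interval so the inner suprema are attained and finite, and the Wasserstein-ball structure of $\mathcal{Q}$ only to bound $\mathcal{C}$ by $\frac{1}{N}\sum_{i=1}^N\|\widehat{\xi}_i\|+\epsilon$. That last bound is in fact a small bonus over the statement as given, since it produces an explicit computable value for the modulus rather than merely asserting $\mathcal{C}<\infty$. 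The only cosmetic gap is that you do not remark on measurability of $\xi\mapsto\sup_{\eta\in[\uline{\eta}_{k},\bar{\eta}_{k}]}\{(\eta-z^T\xi)_+-(\eta-z_0^T\xi)_+\}$ before taking its expectation, but this is immediate since it is a supremum of a jointly continuous function over a compact interval and hence continuous in $\xi$; it does not affect the validity of the argument.
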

\begin{proof}
  It follows from \cite[page 164]{refnew}.
  % that $g(\cdot,\mathcal{K})$ and $g(\cdot)$ are Lipschitz continuous with respect to $z$.
\end{proof}

Next, we prove that $g(z,\mathcal{K})$ converges to $g(z)$ when $\mathcal{K}$ goes to infinity.
\begin{proposition}\label{propp}
Given Assumption \ref{ass:Xi}, we have that
$$ g(z,\mathcal{K})-g(z) \leq 2 \frac{\mathcal{R}_{{\max}}-\mathcal{R}_{{\min}}}{\mathcal{K}}, $$
and $\lim_{\mathcal{K} \to \infty}g(z,\mathcal{K}) {=}g(z)$, uniformly with respect to $z \in Z$.
\end{proposition}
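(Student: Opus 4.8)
The plan is to prove the pointwise estimate $0\le g(z,\mathcal{K})-g(z)\le 2(\mathcal{R}_{\max}-\mathcal{R}_{\min})/\mathcal{K}$ and then observe that the upper bound does not involve $z$, which gives the uniform convergence for free. Write $\phi(\eta,z,\xi):=(\eta-z^T\xi)_{+}-(\eta-z_0^T\xi)_{+}$. I would first record two elementary facts. The lower bound $g(z,\mathcal{K})\ge g(z)$ holds because for every fixed $\eta_0\in[\uline{\eta}_{k},\bar{\eta}_{k}]$ we have $\sup_{\eta\in[\uline{\eta}_{k},\bar{\eta}_{k}]}\phi(\eta,z,\xi)\ge\phi(\eta_0,z,\xi)$ pointwise in $\xi$; integrating against $P$ and then taking the supremum over $\eta_0$, over $P\in\mathcal{Q}$, and the maximum over $k$ yields the inequality. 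Second, $\phi(\cdot,z,\xi)$ is Lipschitz in $\eta$ with modulus $2$, uniformly in $(z,\xi)$, since each of $(\eta-z^T\xi)_{+}$ and $(\eta-z_0^T\xi)_{+}$ is $1$-Lipschitz in $\eta$. So only the upper bound on $g(z,\mathcal{K})-g(z)$ remains.

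For the upper bound I would exploit that every sub-interval has length exactly $(\mathcal{R}_{\max}-\mathcal{R}_{\min})/\mathcal{K}$. Fix $z\in Z$, $k\in\{1,\dots,\mathcal{K}\}$ and $\xi\in\Xi$; by continuity of $\phi(\cdot,z,\xi)$ and compactness of $[\uline{\eta}_{k},\bar{\eta}_{k}]$ the supremum over the sub-interval is attained at some $\eta^{*}\in[\uline{\eta}_{k},\bar{\eta}_{k}]$, and the Lipschitz bound gives
$$
\sup_{\eta\in[\uline{\eta}_{k},\bar{\eta}_{k}]}\phi(\eta,z,\xi)=\phi(\eta^{*},z,\xi)\le\phi(\uline{\eta}_{k},z,\xi)+2\,|\eta^{*}-\uline{\eta}_{k}|\le\phi(\uline{\eta}_{k},z,\xi)+2\,\frac{\mathcal{R}_{\max}-\mathcal{R}_{\min}}{\mathcal{K}}.
$$
Integrating this pointwise inequality against an arbitrary $P\in\mathcal{Q}$ (the left-hand side is measurable and $P$-integrable since $|\phi(\eta,z,\xi)|\le L\|\xi\|$ with $L=\sup_{z\in Z}(\|z\|+\|z_0\|)$, and $\mathbb{E}_P[\|\xi\|]\le\mathcal{C}<\infty$ by Proposition \ref{pro:lip}), and using $\uline{\eta}_{k}\in[\uline{\eta}_{k},\bar{\eta}_{k}]$, I obtain
$$
\mathbb{E}_{P}\Big[\sup_{\eta\in[\uline{\eta}_{k},\bar{\eta}_{k}]}\phi(\eta,z,\xi)\Big]\le\mathbb{E}_{P}\big[\phi(\uline{\eta}_{k},z,\xi)\big]+2\,\frac{\mathcal{R}_{\max}-\mathcal{R}_{\min}}{\mathcal{K}}\le\sup_{\eta\in[\uline{\eta}_{k},\bar{\eta}_{k}]}\mathbb{E}_{P}\big[\phi(\eta,z,\xi)\big]+2\,\frac{\mathcal{R}_{\max}-\mathcal{R}_{\min}}{\mathcal{K}}.
$$

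Finally I would take $\sup_{P\in\mathcal{Q}}$ on both sides (which leaves the additive constant unchanged) and then $\max_{1\le k\le\mathcal{K}}$; since the constant $2(\mathcal{R}_{\max}-\mathcal{R}_{\min})/\mathcal{K}$ is the same for all $k$, this produces $g(z,\mathcal{K})\le g(z)+2(\mathcal{R}_{\max}-\mathcal{R}_{\min})/\mathcal{K}$, and combined with $g(z,\mathcal{K})\ge g(z)$ this is the claimed estimate. Because the right-hand side is independent of $z$, letting $\mathcal{K}\to\infty$ gives $\sup_{z\in Z}|g(z,\mathcal{K})-g(z)|\le 2(\mathcal{R}_{\max}-\mathcal{R}_{\min})/\mathcal{K}\to 0$, i.e.\ uniform convergence on $Z$. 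I do not expect a genuine obstacle here: the only points that need a little care are the measurability and uniform integrability of $\xi\mapsto\sup_{\eta}\phi(\eta,z,\xi)$ (handled by its Lipschitz dependence on $\xi$ together with the uniform first-moment bound $\mathcal{C}$ on $\mathcal{Q}$) and the bookkeeping that both $\sup_{P\in\mathcal{Q}}$ and $\max_{1\le k\le\mathcal{K}}$ preserve an inequality with a fixed additive gap.
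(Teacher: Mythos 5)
Your proof is correct and follows essentially the same route as the paper's: both arguments reduce the sup--expectation exchange on each sub-interval to the $2$-Lipschitz continuity of $(\eta-z^T\xi)_{+}-(\eta-z_0^T\xi)_{+}$ in $\eta$ together with the fact that every sub-interval has length $(\mathcal{R}_{\max}-\mathcal{R}_{\min})/\mathcal{K}$. The only (cosmetic, and arguably favorable) difference is that you anchor at the fixed endpoint $\uline{\eta}_{k}$ rather than at the maximizer $\eta_k^{**}$ of the expected value as the paper does, which lets you bypass the measurable-selection question for the random maximizer $\eta_k^{*}(\omega)$ that the paper's expectation $\mathbb{E}_{P}\big|\phi(\eta_k^{*},z,\xi)-\phi(\eta_k^{**},z,\xi)\big|$ implicitly requires.
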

\begin{proof}
  Denote
  $$
  \eta^{*}_{k}(\omega)\in\argsup_{\eta \in [\uline{\eta}_{k},\bar{\eta}_{k}] }\left \{(\eta-z^T\xi(\omega))_{+}-(\eta-z_0^T \xi(\omega))_{+}\right\},~\omega \in \Omega,~k=1,\cdots,\mathcal{K}
  $$
  Denote
  $$
  \eta^{**}_{k} \in \argsup_{\eta \in [\uline{\eta}_{k},\bar{\eta}_{k}]}\mathbb{E}_{P}\left[(\eta-z^T\xi)_{+}-(\eta-z_0^T \xi)_{+}\right],~k=1,\cdots,\mathcal{K}.
  $$
Notice that $\eta^{*}_{k}$ is a random variable, while
  $\eta^{**}_{k}$ is a real number.
 Since $\eta^{*}_{k}(\omega)$ and $\eta^{**}_{k}$ take values in the same interval $[\uline{\eta}_{k},\bar{\eta}_{k}]$, for any $\omega \in \Omega$, we have
  $|\eta^{*}_{k}(\omega)-\eta^{**}_{k}| \le \bar{\eta}_{k}-\uline{\eta}_{k}=\frac{\mathcal{R}_{{\max}}-\mathcal{R}_{{\min}}}{\mathcal{K}}$.
  Then we obtain
 % \begin{small}
  \begin{flalign}
   &   g(z,\mathcal{K})-g(z) \label{equ:bbb}
% % = & \max_{1\le k \le \mathcal{K}} \sup_{P \in \mathcal{Q}}\mathbb{E}_{P}
%%         \Big\{\sup_{\eta \in [\uline{\eta}_{k},\bar{\eta}_{k}] }\left [(\eta-z^T\xi)_{+}-(\eta-z_0^T \xi)_{+}\right]\Big\}  \nonumber\\
%%     &    -  \max_{1\le k \le \mathcal{K}} \sup_{P \in \mathcal{Q}}\sup_{\eta \in [\uline{\eta}_{k},\bar{\eta}_{k}]}\mathbb{E}_{P}\left[(\eta-z^T\xi)_{+}-(\eta-z_0^T \xi)_{+}\right] \nonumber\\
  \le  \max_{1\le k \le \mathcal{K}} \sup_{P \in \mathcal{Q}}
 \left\{ \mathbb{E}_{P}
         \Big[\sup_{\eta \in [\uline{\eta}_{k},\bar{\eta}_{k}] }\left \{(\eta-z^T\xi)_{+}-(\eta-z_0^T \xi)_{+}\right\}\Big] \right. \nonumber \\
         & \qquad\qquad\qquad\ \ \  -   \left. \sup_{\eta \in [\uline{\eta}_{k},\bar{\eta}_{k}]}\mathbb{E}_{P}\left[(\eta-z^T\xi)_{+}-(\eta-z_0^T \xi)_{+}\right]
         \right\} \nonumber \\
   \le & \max_{1\le k \le \mathcal{K}} \sup_{P \in \mathcal{Q}}
          \mathbb{E}_{P} \Big| \left [(\eta^*_{k}-z^T\xi)_{+}-(\eta^*_{k}-z_0^T \xi)_{+}\right]
          -
          \left[(\eta^{**}_{k}-z^T\xi)_{+}-(\eta^{**}_{k}-z_0^T \xi)_{+}\right] \Big| \nonumber\\
   \le & ~ 2 \frac{\mathcal{R}_{{\max}}-\mathcal{R}_{{\min}}}{\mathcal{K}}, \nonumber
  \end{flalign}
 % \end{small}%
  where the last inequality is due to the Lipschitz continuity of the positive part function $(\cdot)_{+}$.
  Then the conclusion immediately follows.
\end{proof}

{Proposition \ref{propp} shows that to control the approximation error of the constraint function, the interval number $\mathcal{K}$ should be large enough when the range $\mathcal{R}$ is large.}

We denote the feasible sets of problem \eqref{problem4-3} and problem \eqref{222} by $\mathcal{F}$ and $\mathcal{F}_{\mathcal{K}}$, the optimal solution sets by $\mathcal{S}$ and $\mathcal{S}_{\mathcal{K}}$, and the optimal values by $v$ and $v_{\mathcal{K}}$, respectively. It is clear that $\mathcal{F}_{\mathcal{K}} \subset \mathcal{F},~\forall \mathcal{K}$. To derive the convergence from $v_{\mathcal{K}}$ to $v$, as well as the quantitative approximation error estimation, we need some constraint qualification, e.g., Mangasarian Fromovitz constraint qualification (MFCQ) \cite{mangasarian1967fritz}.
However, classical MFCQ works only in the differentiable case (e.g., \cite{LPL20}), while function $g(\cdot,\mathcal{K})$ here is non-smooth.
We can find that $g(\cdot,\mathcal{K})$ and $g(\cdot)$ are continuous and convex, and thus subdifferentiable everywhere. Therefore, it is reasonable for us to extend MFCQ to the subdifferentiable case.

\begin{definition}\label{def:11}{\rm (ND-MFCQ)}
  Let $F(t) := \{x \in \mathbb{R}^n \mid g_j(x, t) \le 0, j \in J\}$ with subdifferentiable $g_j$, here $t$ is the parameter in the constraints. Given $\bar{t}$ and $\bar{x}\in F(\bar{t})$, if there exist some vector $\theta$ and real constants $\sigma<0$, $\alpha_1 >0$, $\alpha_2 >0$ such that
$$
\langle \varsigma, \theta \rangle \le \sigma<0,~ \forall \varsigma \in \partial g_j({x},{t}), \forall x:\|x-\bar{x}\| \le \alpha_1  ,
  \forall t: \|t-\bar{t}\| \le \alpha_2,\forall j \in J_0(\bar{x},\bar{t}),
$$
where $J_0(\bar{x},\bar{t}):=\{j\in J \mid g_j(\bar{x},\bar{t})=0\}$, then we say that non-differentiable MFCQ (ND-MFCQ) holds at $(\bar{x},\bar{t} )$ with $\theta, \sigma, \alpha_1$ and $\alpha_2$,
\end{definition}

We notice that the MFCQ condition under nonsmooth cases has been discussed in some literature, such as \cite[Page 14]{doi}. {In} fact, the ND-MFCQ in Definition \ref{def:11} is more strict than that in \cite{doi}.
%as we consider the effect of the parameter $t$ in the constraints.
We require ND-MFCQ hold uniformly in a neighborhood of $(\bar{x},\bar{t})$ with the same vector $\theta$, while \cite{doi} only requires ND-MFCQ hold at the point $\bar{x}$.
%Firstly, Definition \ref{def:11} defines non-differentiable MFCQ under a parameterized case, while \cite{doi} defines it under a case without parameters. Most importantly, Definition \ref{def:11} requires the non-differentiable MFCQ holds in a neighborhood of $(\bar{x},\bar{t})$ with a unified vector $\theta$, while \cite{doi} only requires the non-differentiable MFCQ holds at a point $\bar{x}$.
ND-MFCQ %defined in Definition \ref{def:11}
is an extension of MFCQ \cite{mangasarian1967fritz}, and is equivalent to MFCQ if the constraint functions are differentiable.

In this paper, we only have one constraint and thus $J=\{1\}$. Our decision variable $z$ corresponds to $x$
and our parameter $\frac{1}{\mathcal{K}}$ corresponds to $t$ in Definition \ref{def:11}.
To arrive at the convergence result, we also require the following  assumption.

\begin{assumption}\label{ass:new}
%  a) The objective function $f(z)$ is continuous and differentiable, and its gradients are bounded by $\mathcal{C}_f=\max_{z \in Z} \|\nabla f(z)\|_2 < \infty$;
The optimal solution set of problem \eqref{222} with $\mathcal{K}=1$, denoted by  $\mathcal{S}_{1}$, is nonempty.
\end{assumption}

From \eqref{equ:fl2} and the constraints in \eqref{222}, we have that $\mathcal{S} \supset \mathcal{S}_{\mathcal{K}} \supset \mathcal{S}_1$ for any $\mathcal{K}$. Therefore, if $\mathcal{S}_1$ is nonempty, then $\mathcal{S}$ and $\mathcal{S}_{\mathcal{K}},\forall \mathcal{K}$, are also nonempty.

\begin{theorem}\label{pro:cite}
Given Assumptions \ref{ass:Xi} and \ref{ass:new}. For some $z^* \in \mathcal{S}$, assume that ND-MFCQ holds at $(z^*,0)$ with $\theta$, $\sigma$, $\alpha_1$, and $\alpha_2$ as is defined in Definition \ref{def:11}.
If the objective function $f$ is Lipschitz continuous with modulus $L_f$, then for $\mathcal{K} \ge \max\left\{\frac{1}{\alpha_2}, \frac{2}{|\sigma|} \frac{\mathcal{R}_{\rm{max}}-\mathcal{R}_{\rm{min}}}{\alpha_1} \|\theta\|,-2 \frac{\mathcal{R}_{\rm{max}}-\mathcal{R}_{\rm{min}}}{g(z^*)}\left(\mathcal{C}\frac{\|\theta\|}{|\sigma|}+1 \right)\right\},$ we have that
  $$|v_{\mathcal{K}} -v| \le L_f  \frac{2\|\theta\|}{|\sigma|} \frac{\mathcal{R}_{\rm{max}}-\mathcal{R}_{\rm{min}}}{\mathcal{K}},$$
and $\lim_{\mathcal{K} \to \infty} v_{\mathcal{K}}=v$.
\end{theorem}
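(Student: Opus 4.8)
The plan is to prove the two-sided estimate by first disposing of one inequality for free and then making the other one constructive. Since Proposition~\ref{propp} gives $g(z)\le g(z,\mathcal{K})$ for every $z$ and every $\mathcal{K}$, the feasible set $\mathcal{F}_{\mathcal{K}}$ of \eqref{222} is contained in the feasible set $\mathcal{F}$ of \eqref{problem4-3}, so $v_{\mathcal{K}}\ge v$ automatically; moreover both optimal values are finite because $\mathcal{S}_{1}\subset\mathcal{S}_{\mathcal{K}}\subset\mathcal{S}$ is nonempty by Assumption~\ref{ass:new}, $Z$ is compact, and $f$ is continuous. It therefore remains to produce, for each $\mathcal{K}$ above the stated threshold, a point $z_{\mathcal{K}}$ that is feasible for \eqref{222} and lies within distance $\tfrac{2\|\theta\|}{|\sigma|}\tfrac{\mathcal{R}_{\max}-\mathcal{R}_{\min}}{\mathcal{K}}$ of $z^{*}$; then $v_{\mathcal{K}}\le f(z_{\mathcal{K}})\le f(z^{*})+L_f\|z_{\mathcal{K}}-z^{*}\|=v+L_f\tfrac{2\|\theta\|}{|\sigma|}\tfrac{\mathcal{R}_{\max}-\mathcal{R}_{\min}}{\mathcal{K}}$, and the claimed bound, together with the limit, follows.

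The natural candidate is the ND-MFCQ shift $z_{\mathcal{K}}:=z^{*}+t_{\mathcal{K}}\theta$ with step $t_{\mathcal{K}}:=\tfrac{2}{|\sigma|}\tfrac{\mathcal{R}_{\max}-\mathcal{R}_{\min}}{\mathcal{K}}$, chosen precisely so that $\|z_{\mathcal{K}}-z^{*}\|=t_{\mathcal{K}}\|\theta\|$ equals the target distance. The real work is to verify $g(z_{\mathcal{K}},\mathcal{K})\le 0$ (and $z_{\mathcal{K}}\in Z$). Using Proposition~\ref{propp} one reduces this to bounding $g(z^{*}+t_{\mathcal{K}}\theta)$, since $g(z_{\mathcal{K}},\mathcal{K})\le g(z_{\mathcal{K}})+2\tfrac{\mathcal{R}_{\max}-\mathcal{R}_{\min}}{\mathcal{K}}$. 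When the constraint is active at $z^{*}$, i.e.\ $g(z^{*})=0$, I would invoke ND-MFCQ along the whole segment: the threshold $\mathcal{K}\ge\tfrac{1}{\alpha_2}$ places the parameter $1/\mathcal{K}$ in the $\alpha_2$-neighbourhood of $0$, and $\mathcal{K}\ge\tfrac{2\|\theta\|}{|\sigma|}\tfrac{\mathcal{R}_{\max}-\mathcal{R}_{\min}}{\alpha_1}$ keeps $z^{*}+s\theta$ in the $\alpha_1$-ball for all $s\in[0,t_{\mathcal{K}}]$, so Definition~\ref{def:11} applies. Since $s\mapsto g(z^{*}+s\theta)$ is finite, convex and (Proposition~\ref{pro:lip}) Lipschitz, hence absolutely continuous, its a.e.\ derivative is $\langle\varsigma_s,\theta\rangle$ for some $\varsigma_s\in\partial g(z^{*}+s\theta)$, which ND-MFCQ bounds by $\sigma$; integrating yields $g(z^{*}+t_{\mathcal{K}}\theta)\le g(z^{*})+\sigma t_{\mathcal{K}}$, and substituting $t_{\mathcal{K}}$ and $\sigma=-|\sigma|$ gives $g(z_{\mathcal{K}},\mathcal{K})\le 0$. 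When instead $g(z^{*})<0$, ND-MFCQ is vacuous, but the global Lipschitz bound of Proposition~\ref{pro:lip} gives $g(z^{*}+t_{\mathcal{K}}\theta)\le g(z^{*})+\mathcal{C}t_{\mathcal{K}}\|\theta\|$, and then $g(z_{\mathcal{K}},\mathcal{K})\le g(z^{*})+\tfrac{2(\mathcal{R}_{\max}-\mathcal{R}_{\min})}{\mathcal{K}}\big(\mathcal{C}\tfrac{\|\theta\|}{|\sigma|}+1\big)\le 0$ exactly under the third threshold $\mathcal{K}\ge-\tfrac{2(\mathcal{R}_{\max}-\mathcal{R}_{\min})}{g(z^{*})}\big(\mathcal{C}\tfrac{\|\theta\|}{|\sigma|}+1\big)$. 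Finally $z_{\mathcal{K}}\in Z$ is obtained by choosing $\theta$ to be (also) a feasible direction for the polyhedron $Z$ at $z^{*}$, so that the small shift stays in $Z$.

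Combining the pieces, for $\mathcal{K}$ above the stated threshold we have $z_{\mathcal{K}}\in\mathcal{F}_{\mathcal{K}}$, hence $v\le v_{\mathcal{K}}\le f(z_{\mathcal{K}})\le v+L_f t_{\mathcal{K}}\|\theta\|=v+L_f\tfrac{2\|\theta\|}{|\sigma|}\tfrac{\mathcal{R}_{\max}-\mathcal{R}_{\min}}{\mathcal{K}}$, which gives $|v_{\mathcal{K}}-v|\le L_f\tfrac{2\|\theta\|}{|\sigma|}\tfrac{\mathcal{R}_{\max}-\mathcal{R}_{\min}}{\mathcal{K}}$ and $v_{\mathcal{K}}\to v$. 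I expect the main obstacle to be the feasibility verification $g(z_{\mathcal{K}},\mathcal{K})\le 0$: it is where Proposition~\ref{propp}, the Lipschitz modulus $\mathcal{C}$ from Proposition~\ref{pro:lip}, and the three components of the lower bound on $\mathcal{K}$ must be matched up precisely, and where the mean-value/integration argument for the nonsmooth convex $g$ along the segment $z^{*}+[0,t_{\mathcal{K}}]\theta$ — together with keeping the shifted point inside $Z$ — has to be carried out carefully.
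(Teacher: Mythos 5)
Your proposal follows essentially the same route as the paper's proof: the same shifted point $z_{\mathcal{K}}=z^{*}+\frac{2}{|\sigma|}\frac{\mathcal{R}_{\max}-\mathcal{R}_{\min}}{\mathcal{K}}\,\theta$, the same two-case feasibility check (active versus inactive constraint at $z^{*}$) matched to the three components of the threshold on $\mathcal{K}$, and the same final Lipschitz estimate $|v_{\mathcal{K}}-v|\le L_f\|z_{\mathcal{K}}-z^{*}\|$; the only cosmetic difference is that you integrate the a.e.\ directional derivative of the convex function $g$ along the segment and then transfer to $g(\cdot,\mathcal{K})$ via Proposition \ref{propp}, where the paper applies the nonsmooth mean-value theorem directly to $g(\cdot,\mathcal{K})$. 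Your observation that $z_{\mathcal{K}}\in Z$ must also be verified (e.g.\ by taking $\theta$ to be a feasible direction of $Z$ at $z^{*}$) is a legitimate detail that the paper's proof silently skips.
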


\begin{proof}
  For any $\mathcal{K}$, let $z_\mathcal{K}=z^* - \frac{2}{\sigma} \frac{\mathcal{R}_{{\max}}-\mathcal{R}_{{\min}}}{\mathcal{K}} \theta.$
  Since $z^* \in \mathcal{S}$, then obviously $g(z^*) \le 0$.

  Firstly, we claim that $z_\mathcal{K} \in \mathcal{F}_{\mathcal{K}}$. To prove this, we examine two cases on whether the constraint $g(z) \le 0$ is active at $z^*$.

  $\bullet$  Case 1: $g(z^*)=0$.
  By Proposition \ref{propp}, we have
    \begin{flalign}
   & g(z_\mathcal{K},\mathcal{K})
     = g(z_\mathcal{K},\mathcal{K}) - g(z^*, \mathcal{K}) + g(z^*, \mathcal{K}) - g(z^*) + g(z^*) \nonumber\\
     = &  g(z_\mathcal{K},\mathcal{K}) - g(z^*, \mathcal{K}) + g(z^*, \mathcal{K})- g(z^*)
     \le g(z_\mathcal{K},\mathcal{K}) - g(z^*, \mathcal{K}) + 2 \frac{\mathcal{R}_{{\max}}-\mathcal{R}_{{\min}}}{\mathcal{K}}. \nonumber
   \end{flalign}
  By the extended mean-value theorem \cite[Theorem 10.48]{Rockafellar2009Variational}, for some $\tau \in (0,1)$ and the corresponding point $z^{\tau}_{\mathcal{K}} = (1-\tau)z_{\mathcal{K}} + \tau z^*$, there is a vector $\varsigma \in \partial g(z^{\tau}_{\mathcal{K}},\mathcal{K})$ satisfying
   \begin{flalign}
    g(z_\mathcal{K},\mathcal{K})
    \le g(z_\mathcal{K},\mathcal{K}) - g(z^*, \mathcal{K}) + 2 \frac{\mathcal{R}_{{\max}}-\mathcal{R}_{{\min}}}{\mathcal{K}}
    = \langle \varsigma, z_\mathcal{K}-z^* \rangle + 2 \frac{\mathcal{R}_{{\max}}-\mathcal{R}_{{\min}}}{\mathcal{K}}. \nonumber
  \end{flalign}
  For any $\mathcal{K}\ge \frac{2}{|\sigma|} \frac{\mathcal{R}_{{\max}}-\mathcal{R}_{{\min}}}{\alpha_1} \|\theta\|$, $z^{\tau}_{\mathcal{K}}$ is in the $\alpha_1$-neighborhood of $z^*$,
  which can be seen from
  \begin{flalign*}
  \|z^{\tau}_{\mathcal{K}} - z^*\|
  = \|(1-\tau)(z_{\mathcal{K}}-z^*)\|
  \le \|z_{\mathcal{K}}-z^*\|
  = \frac{2}{|\sigma|} \frac{\mathcal{R}_{{\max}}-\mathcal{R}_{{\min}}}{\mathcal{K}} \|\theta\|
  \le \alpha_1.
  \end{flalign*}
  Therefore, by ND-MFCQ at $(z^*,0)$, we have $\langle \varsigma,\theta\rangle\leq\sigma $ for $\varsigma\in \partial g(z^{\tau}_{\mathcal{K}},\mathcal{K})$ and $\mathcal{K} \ge \max\left\{\frac{2}{|\sigma|} \frac{\mathcal{R}_{{\max}}-\mathcal{R}_{{\min}}}{\alpha_1} \|\theta\|,\frac{1}{\alpha_2}\right\}$.
  Then we have
  \begin{flalign*}
    g(z_\mathcal{K},\mathcal{K})
    & \le \langle \varsigma, z_\mathcal{K}-z^* \rangle + 2 \frac{\mathcal{R}_{{\max}}-\mathcal{R}_{{\min}}}{\mathcal{K}}
     = - \frac{2}{\sigma} \frac{\mathcal{R}_{{\max}}-\mathcal{R}_{{\min}}}{\mathcal{K}}
    \langle \varsigma,  \theta \rangle + 2 \frac{\mathcal{R}_{{\max}}-\mathcal{R}_{{\min}}}{\mathcal{K}} \\
    & \le - \frac{2}{\sigma} \frac{\mathcal{R}_{{\max}}-\mathcal{R}_{{\min}}}{\mathcal{K}} \sigma + 2 \frac{\mathcal{R}_{{\max}}-\mathcal{R}_{{\min}}}{\mathcal{K}}=0. \\
  \end{flalign*}
  Then $z_\mathcal{K} \in \mathcal{F}_{\mathcal{K}}$.

  $\bullet$ Case 2: $g(z^*) <0$. Let $\delta: = -g(z^*)>0$.
  If $\mathcal{K}\ge 2 \frac{\mathcal{R}_{{\max}}-\mathcal{R}_{{\min}}}{\delta}\left(\mathcal{C}\frac{\|\theta\|}{|\sigma|}+1 \right)$, then we obtain from Propositions \ref{pro:lip} and \ref{propp} that
  \begin{flalign*}
   & |g(z_\mathcal{K},\mathcal{K}) -g(z^*)|
     \le
    |g(z_\mathcal{K},\mathcal{K}) -g(z^*,\mathcal{K}) | +|g(z^*,\mathcal{K})-g(z^*) | \\
    \le
  &  \mathcal{C} \|z_\mathcal{K}-z^* \| +2 \frac{\mathcal{R}_{{\max}}-\mathcal{R}_{{\min}}}{\mathcal{K}}
  =2 \frac{\mathcal{R}_{{\max}}-\mathcal{R}_{{\min}}}{\mathcal{K}} \left(\mathcal{C}\frac{\|\theta\|}{|\sigma|}+1 \right) \le \delta.
  \end{flalign*}
  This indicates that
  $  g(z_\mathcal{K},\mathcal{K}) \le g(z^*)+\delta =0.  $
Thus $z_\mathcal{K} \in \mathcal{F}_{\mathcal{K}}$ under this case.

  Next, we estimate the approximation error $|v_{\mathcal{K}}-v|$ of the optimal values. Since $\mathcal{F}_{\mathcal{K}} \subset \mathcal{F}$, then $v_{\mathcal{K}} \ge v$. By Assumption \ref{ass:new}, we can choose $z^*_{\mathcal{K}} \in \mathcal{S}_{\mathcal{K}}$. % From Assumption \ref{ass:new} a) and the mean-value theorem, for some $\tau \in (0,1)$ and the corresponding point $z^{\tau}_{\mathcal{K}} = (1-\tau)z_{\mathcal{K}} + \tau z^*$, we have
  Then we have
  \begin{flalign*}
    0 & \le v_{\mathcal{K}} -v
     = f(z^*_{\mathcal{K}}) -f (z^*) \le f(z_{\mathcal{K}}) -f(z^*)
     %= \langle \nabla f(z^{\tau}_{\mathcal{K}}),z_{\mathcal{K}} -z^* \rangle
     \\
     &\le L_f \|z_{\mathcal{K}} -z^* \| =L_f  \frac{2\|\theta\|}{|\sigma|} \frac{\mathcal{R}_{\max}-\mathcal{R}_{{\min}}}{\mathcal{K}}.
  \end{flalign*}
  The conclusions follow immediately.
\end{proof}

Proposition \ref{pro:cite} quantitatively estimates the approximation error between the optimal values of problem \eqref{222} and problem \eqref{problem4-3}.

\subsection{\texorpdfstring{Reformulation of problem \eqref{222}}{ }}
By applying Lemma \ref{lemma} to each supremum problem w.r.t. $P$ for $k=1,\ldots,\mathcal{K}$, we have a reformulation of problem \eqref{222}
\begin{subequations}\label{problem4-6}
\begin{eqnarray}
&\quad\ \min_{z \in Z, \lambda \in \mathbb{R}_{+}^{
 \mathcal{K}}} & f(z) \label{obj}\\
&\quad\ \text{s.t.}  &
 \lambda_{k} \epsilon +\frac{1}{N}\sum_{i=1}^{N}\sup_{\xi \in \Xi} \bigg\{ \label{constraint} \\
 &&\sup_{\eta \in [\uline{\eta}_{k},\bar{\eta}_{k}]}\{(\eta-z^T\xi)_+-(\eta-z_0^T\xi)_+\}-\lambda_{k} \|\xi-\widehat{\xi}_i\| \bigg\}\le 0,~k=1,\cdots,\mathcal{K}.\nonumber
\end{eqnarray}
\end{subequations}
To simplify the notation, we write \eqref{constraint} as
\begin{equation}\label{equ:sss}
  \lambda_{k} \epsilon +\frac{1}{N}\sum_{i=1}^{N} V^{ik}_{S} \le 0,~k=1,\cdots,\mathcal{K},
\end{equation}
where
\begin{equation*}
V^{ik}_{S}:=\sup \limits_{(\xi,\eta)\in \Xi \times [\uline{\eta}_{k},\bar{\eta}_{k}]} (\eta-z^T\xi)_+-(\eta-z_0^T\xi)_+-\lambda_{k} \|\xi-\widehat{\xi}_i\|,~i=1,\cdots,N,~k=1,\cdots,\mathcal{K}.
\end{equation*}

In what follows, we derive a reformulation for $V^{ik}_{S}$. According to Assumption \ref{ass:Xi}, $V^{ik}_{S}$ is equivalent to
\begin{equation}\label{equ:SOCP}
\begin{split}
\sup \limits_{\xi,\eta ,s ,m}  \quad & (\eta-z^T\xi)_{+}-s-\lambda_{k} m\\
\quad\text{s.t.} \,\quad   & s \ge \eta-z_0^T \xi,~C\xi \le d,\\
                         &  \eta \ge \uline{\eta}_{k},\eta \le \bar{\eta}_{k},
                         ~ \|\xi-\widehat{\xi}_i\|\le m,\\
                         & \xi \in \mathbb{R}^n,\eta \in \mathbb{R},s \in \mathbb{R}_+, m\in \mathbb{R}.
\end{split}
\end{equation}
Problem \eqref{equ:SOCP} is a non-convex optimization problem with a piecewise linear objective function with two pieces. Examining the two pieces of the objective function separately,
we can split problem \eqref{equ:SOCP} into two convex sub-problems:
\begin{flalign*}
& \quad V^{ik}_{S1}=  &\sup_{\xi ,\eta,s,m} \quad  \eta-z^T\xi-s-\lambda_{k} m\,& &
& \quad V^{ik}_{S2}=  &\sup_{\xi ,\eta,s,m} \quad -s-\lambda_{k} m \quad \ \\
& &\qquad\text{s.t.} \quad\  s\ge\eta-z_0^T\xi,\qquad\ \ \ &&
  &&\qquad\text{s.t.} \quad\ s \ge\eta-z_0^T\xi, \ \\
& &\eta-z^T \xi \ge 0,\ \ \  \qquad& \qquad & & &\eta-z^T\xi\le0,\ \\
&(P_{SSD-1}^{ik}) &C\xi \le d,\qquad \qquad\ \ \ & &
  &(P_{SSD-2}^{ik})  &C\xi \le d, \qquad \ \\
& & s \ge 0,\qquad\quad \qquad\ \ \ & & & & s \ge 0,\quad\qquad \ \\
& & \eta \ge \uline{\eta}_{k},\ \,\quad\quad \qquad\ \ \ & & & & \eta \ge \uline{\eta}_{k},\ \ \ \ \ \,\quad\\
& & \eta \le \bar{\eta}_{k}, \ \, \quad \quad\qquad \ \ \ & & & & \eta \le \bar{\eta}_{k}, \ \ \ \ \ \,\quad\\
& & \|\xi-\widehat{\xi}_i\|\le m.\quad\ \ \ \ \  & & & & \|\xi-\widehat{\xi}_i\|\le m.
\end{flalign*}
%\begin{flalign*}
%&  V^{ik}_{S1}=  \sup_{\xi ,\eta,s,m} \quad  \eta-z^T\xi-s-\lambda_{k} m\\
%&  \qquad \qquad \text{s.t.} \quad  s \ge\eta-z_0^T\xi,   \\
%(P_{SSD-1}^{ik})& \qquad \qquad \quad \quad\ \eta-z^T \xi \ge 0,  \\
%& \qquad \qquad \quad \quad\ C\xi \le d,\\
%& \qquad \qquad \quad \quad\ s \ge 0, \\
%& \qquad \qquad \quad \quad\ \eta \ge \uline{\eta}_{k},\\
%& \qquad \qquad \quad \quad\ \eta \le \bar{\eta}_{k}, \\
%& \qquad \qquad \quad \quad\ \|\xi-\widehat{\xi}_i\|\le m.
%\end{flalign*}
%and
%\begin{flalign*}
%   & V^{ik}_{S2}=\sup_{\xi ,\eta,s,m} \quad  -s-\lambda_{k} m \\
%   & \qquad \qquad \text{s.t.} \quad s \ge\eta-z_0^T\xi, \\
% (P_{SSD-2}^{ik}) &  \qquad \qquad \quad \quad\   \eta-z^T\xi\le0, \\
%   &  \qquad \qquad \quad \quad\ C\xi \le d, \\
%   &  \qquad \qquad \quad \quad\ s \ge 0, \\
%   &  \qquad \qquad \quad \quad\ \eta \ge \uline{\eta}_{k},\\
%   &  \qquad \qquad \quad \quad\ \eta \le \bar{\eta}_{k}, \\
%   &  \qquad \qquad \quad \quad\ \|\xi-\widehat{\xi}_i\|\le m.
%\end{flalign*}
%Denote the optimal values of ($P_{SSD-1}^{ik}$) and ($P_{SSD-2}^{ik}$) by $V^{ik}_{S1}$ and $V_{S2}^{ik}$, respectively.
And we have
\begin{equation}\label{equ:V}
V^{ik}_{S}=\max\{V^{ik}_{S1},V^{ik}_{S2}\}.
\end{equation}

Using conic duality theory, we derive the dual problem of ($P_{SSD-1}^{ik}$) as follows:
%\begin{small}
\begin{flalign*}
     \tilde{V}_{S1}^{ik}& &=\inf_{\mu,\nu}\
    & d^T\nu-\widehat{\xi}_i^T (z -\mu_1 z_0 +\mu_2 z +C^T\nu) -\mu_3 \uline{\eta}_{k} +(1\!-\!\mu_1+\mu_2+\mu_3) \bar{\eta}_{k} &\\
 (D_{SSD-1}^{ik})
 & &\text{s.t.} \    &\mu_1 \le 1, ~1-\mu_1+\mu_2+\mu_3\ge 0, \\
  & &  & \|z -\mu_1 z_0+ \mu_2 z + C^T \nu\| \le \lambda_{k},\\
  &&& \mu \in \mathbb{R}_+^3,~\nu \in \mathbb{R}_+^l.&
\end{flalign*}
%\end{small}%
Likewise, the dual problem of ($P_{SSD-2}^i$) is
%\begin{small}
\begin{align*}
 \tilde{V}_{S2}^{ik}& & =  \inf_{\mu,\nu}\
  & d^T\nu-\widehat{\xi}_i^T (-\mu_1z_0-\mu_2z+C^T \nu)-\mu_3 \uline{\eta}_{k} +( -\mu_1-\mu_2+\mu_3) \bar{\eta}_{k} &\ \ \ \\
(D_{SSD-2}^{ik})\ & &  \text{s.t.} \
   & \mu_1\le 1,~ -\mu_1-\mu_2+\mu_3\ge 0, \\
  & &  &\|-\mu_1z_0-\mu_2z + C^T \nu\| \le \lambda_{k},\\
  && & \mu \in \mathbb{R}_+^3,~\nu \in \mathbb{R}_+^l. &
\end{align*}
By equation \eqref{equ:V} and the duality theory, we have that $V^{ik}_{S} \le \max \{\tilde{V}_{S1}^{{ik}},\tilde{V}_{S2}^{ik}\}$, $i=1,\cdots,N,~k=1,\cdots,\mathcal{K}$.
\begin{assumption}\label{ass:strictlyfeasible}
  For any $i=1,\cdots,N,~k=1,\cdots,\mathcal{K}$,
  problems \emph{(}$P_{SSD-1}^{ik}$\emph{)} and \emph{(}$P_{SSD-2}^{ik}$\emph{)} are strictly feasible.
\end{assumption}

Given Assumption \ref{ass:strictlyfeasible}, the strong duality holds. Thus, the duality gap between ${V}_{S1}^{{ik}}$  (resp. ${V}_{S2}^{{ik}}$) and $\tilde{V}_{S1}^{{ik}}$ (resp. $\tilde{V}_{S2}^{{ik}}$) is zero,
and $V^{ik}_{S} = \max \{\tilde{V}_{S1}^{ik},\tilde{V}_{S2}^{ik}\}$, $i=1,\cdots,N,~k=1,\cdots,\mathcal{K}$.
Introducing auxiliary variables $V^{ik},~i=1,\cdots,N,~k=1,\cdots,\mathcal{M} $, constraints
\begin{equation}\label{problem4-10}
    \begin{array}{l}
        \lambda_{k} \epsilon + \frac{1}{N}\sum \limits_{i=1}^{N}V^{ik} \le 0, k = 1,\cdots, \mathcal{K},\\ %\label{123}
V^{ik} \ge \tilde{V}_{S1}^{ik},~i=1,\cdots,N,k = 1,\cdots, \mathcal{K},\\ %\label{231}\
V^{ik} \ge \tilde{V}_{S2}^{ik},~i=1,\cdots,N,k = 1,\cdots, \mathcal{K} %\label{213}.
    \end{array}
\end{equation}
are equivalent to constraints \eqref{equ:sss}.
\begin{comment}
To prove the assertion, first, if there exist $V^{ik}$, $i=1,\cdots,N,~k=1,\cdots,\mathcal{K}$, such that constraints \eqref{problem4-10} hold, then
\begin{small}
\begin{equation*}
  \begin{split}
\lambda_{k} \epsilon + \frac{1}{N}\sum_{i=1}^{N}V^{ik}_{S} =
\lambda_{k} \epsilon + \frac{1}{N}\sum_{i=1}^{N}
 \max \{\tilde{V}_{S1}^{{ik}},\tilde{V}_{S2}^{ik}\}
\le \lambda_{k} \epsilon + \frac{1}{N}\sum_{i=1}^{N}V^{ik} \le 0,~ k = 1,\cdots, \mathcal{K},
\end{split}
\end{equation*}
\end{small}%
and thus, the constraints in \eqref{equ:sss} hold. On the other hand, if $V^{ik}_{S}$, $i=1,\cdots,N,~k=1,\cdots,\mathcal{K}$ satisfy the constraints in \eqref{equ:sss}, let $V^{ik} = \max \{\tilde{V}_{S1}^{ik},\tilde{V}_{S2}^{ik}\}$, $i=1,\cdots,N,~k=1,\cdots,\mathcal{K}$. Then by strong duality condition between ${V}_{S1}^{{ik}}$  (resp. ${V}_{S2}^{{ik}}$) and $\tilde{V}_{S1}^{{ik}}$ (resp. $\tilde{V}_{S2}^{{ik}}$), constraints  \eqref{123}-\eqref{213} hold.

\end{comment}
Taking the formulations ($D_{SSD-1}^{ik}$) and ($D_{SSD-2}^{ik}$) of $\tilde{V}_{S1}^{{ik}}$ and $\tilde{V}_{S2}^{{ik}}$
into constraints \eqref{problem4-10} gives the following theorem.
\begin{theorem}\label{the:4}
Given Assumptions \ref{ass:Xi} and \ref{ass:strictlyfeasible},
the optimal value of the following optimization problem
\begin{footnotesize}
\begin{equation*}
\begin{split}
\min \ &\ \ f(z)   \\
{\rm s.t.} \ &\ \ \lambda_{k} \epsilon + \frac{1}{N}\sum_{i=1}^{N}V^{ik} \le 0,~ k=1, \cdots,\mathcal{K},\\
(P_{SSD-U}) & \left.
\begin{array}{l}
  \mu_{1}^{ik} \le 1,~\tilde{\mu}_{1}^{ik} \le 1,~1-\mu_1^{ik}+\mu_2^{ik}+\mu_3^{ik}\ge 0,~-\tilde{\mu}_1^{ik}-\tilde{\mu}_2^{ik}+\tilde{\mu}_3^{ik}\ge 0, \\
  V^{ik} \ge d^T\nu^{ik}\!-\widehat{\xi}_i^T (z\!-\mu_{1}^{ik}z_{0}\!+\mu_2^{ik}z\!+C^T \nu^{ik}) \!-\mu_{3}^{ik}\uline{\eta}_{k}\!+(1\!-\mu_1^{ik}\!+\mu_2^{ik}\!+\mu_3^{ik}) \bar{\eta}_{k}, \\
  V^{ik} \ge d^T\tilde{\nu}^{ik}-\widehat{\xi}_i^T (-\tilde{\mu}_1^{ik}z_0-\tilde{\mu}_2^{ik}z+C^T \tilde{\nu}^{ik})-\tilde{\mu}_3^{ik} \uline{\eta}_{k} +(-\tilde{\mu}_1^{ik}-\tilde{\mu}_2^{ik}+\tilde{\mu}_3^{ik}) \bar{\eta}_{k}, \\
  \|z -\mu_1^{ik} z_0+ \mu_2^{ik} z + C^T \nu^{ik}\| \le \lambda_{k},\ \|-\tilde{\mu}_1^{ik}z_0-\tilde{\mu}_2^{ik}z+C^T \tilde{\nu}^{ik}\|\le\lambda_{k}, \\
  \mu^{ik} \in \mathbb{R}_{+}^{3},
  \nu^{ik}\in \mathbb{R}^{l}_+,
  \tilde{\mu}^{ik} \in \mathbb{R}_{+}^{3},
  \tilde{\nu}^{ik}\in \mathbb{R}^{l}_+,
  V^{ik} \in \mathbb{R},
\end{array}
\right\}\\
&\qquad \qquad\qquad \qquad\qquad \qquad\qquad \qquad\qquad \qquad \qquad \qquad i=1,\cdots,N,~k=1,\cdots,\mathcal{K}, \\
&\ \ z\in Z,\lambda \in \mathbb{R}_{+}^{\mathcal{K}}.
\end{split}
\end{equation*}
\end{footnotesize}%
%\begin{flalign*}
%& &\min \ & f(z)  & \\
%& &{\rm s.t.} \ & \lambda_{k} \epsilon + \frac{1}{N}\sum_{i=1}^{N}V^{ik} \le 0,~ k=1, \cdots,\mathcal{K},&\\
%& & & \mu_{1}^{ik} \le 1,~\tilde{\mu}_{1}^{ik} \le 1,~1-\mu_1^{ik}+\mu_2^{ik}+\mu_3^{ik}\ge 0,~-\tilde{\mu}_1^{ik}-\tilde{\mu}_2^{ik}+\tilde{\mu}_3^{ik}\ge 0, &\\
%& & & \qquad\qquad  i=1,\cdots,N,~k=1,\cdots,\mathcal{K},&\\
%(P_{SSD-U})
%& & & V^{ik} \ge d^T\nu^{ik}\!-\widehat{\xi}_i^T (z\!-\mu_{1}^{ik}z_{0}\!+\mu_2^{ik}z\!+C^T \nu^{ik}) \!-\mu_{3}^{ik}\uline{\eta}_{k}\!+(1\!-\mu_1^{ik}\!+\mu_2^{ik}\!+\mu_3^{ik}) \bar{\eta}_{k},&\\
%& & &\qquad \qquad i=1,\cdots,N,~k=1,\cdots,\mathcal{K},&\\
%& & &  \|z -\mu_1^{ik} z_0+ \mu_2^{ik} z + C^T \nu^{ik}\| \le \lambda_{k},\ \|-\tilde{\mu}_1^{ik}z_0-\tilde{\mu}_2^{ik}z+C^T \tilde{\nu}^{ik}\|\le\lambda_{k},&\\
%& & &\qquad \qquad i=1,\cdots,N,~k=1,\cdots,\mathcal{K},&\\
%& & & V^{ik} \ge d^T\tilde{\nu}^{ik}-\widehat{\xi}_i^T (-\tilde{\mu}_1^{ik}z_0-\tilde{\mu}_2^{ik}z+C^T \tilde{\nu}^{ik})-\tilde{\mu}_3^{ik} \uline{\eta}_{k} +(-\tilde{\mu}_1^{ik}-\tilde{\mu}_2^{ik}+\tilde{\mu}_3^{ik}) \bar{\eta}_{k},& \\
%& & &\qquad \qquad i=1,\cdots,N,~k=1,\cdots,\mathcal{K},& \\
%& & & z\in Z,\lambda \in \mathbb{R}_{+}^{\mathcal{K}},
%\mu^{ik} \in \mathbb{R}_{+}^{3},
%\nu^{ik}\in \mathbb{R}^{l}_+,
%\tilde{\mu}^{ik} \in \mathbb{R}_{+}^{3},
%\tilde{\nu}^{ik}\in \mathbb{R}^{l}_+,
%V^{ik} \in \mathbb{R},&\\
%& & &\qquad \qquad i=1,\cdots,N,~k=1,\cdots,\mathcal{K}.&
%\end{flalign*}
is an upper bound to that of problem ($P_{SSD}$).
\end{theorem}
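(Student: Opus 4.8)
The plan is to compose the chain of reformulations and one–sided approximations collected in Figure~\ref{fig:111}, checking at each arrow whether it preserves the optimal value or only enlarges it (equivalently, shrinks the feasible set), so that the composite map from $(P_{SSD})$ to $(P_{SSD-U})$ can only raise the optimal value. First I would recall that $(P_{SSD})\Leftrightarrow\eqref{shenme}\Leftrightarrow\eqref{shenme2}$ by the definition of distributionally robust SSD together with the equivalence of \eqref{equ:1} and \eqref{equ:2}; that $\eqref{shenme2}\Leftrightarrow\eqref{shenme3}$ is a pure rewrite and $\eqref{shenme3}\Leftrightarrow\eqref{problem4-3}$ merely uses $\mathcal{R}=\bigcup_{k=1}^{\mathcal{K}}[\uline{\eta}_k,\bar{\eta}_k]$; and that passing from \eqref{problem4-3} to \eqref{222} only tightens the constraints, since $\mathbb{E}_P\bigl[\sup_\eta\{\cdot\}\bigr]\ge\sup_\eta\mathbb{E}_P[\cdot]$, as already observed right after \eqref{222}. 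Consequently the optimal value of \eqref{222} is an upper bound for that of $(P_{SSD})$, and it remains to identify the optimal value of \eqref{222} with that of $(P_{SSD-U})$.

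For this I would apply Lemma~\ref{lemma} to each of the $\mathcal{K}$ worst–case–expectation constraints of \eqref{222}: the integrand $\Psi(\xi)=\sup_{\eta\in[\uline{\eta}_k,\bar{\eta}_k]}\{(\eta-z^T\xi)_+-(\eta-z_0^T\xi)_+\}$ is proper, continuous and piecewise linear in $\xi$, hence has finite growth rate, so \eqref{222} is equivalent to \eqref{problem4-6}, which in the $V_S^{ik}$ notation is \eqref{equ:sss}. Next I would justify \eqref{equ:V}: in \eqref{equ:SOCP} the only nonlinearity is the two–piece term $(\eta-z^T\xi)_+$, so partitioning the feasible region of \eqref{equ:SOCP} according to the sign of $\eta-z^T\xi$ splits the problem into the two convex conic programs $(P_{SSD-1}^{ik})$ and $(P_{SSD-2}^{ik})$; since these two pieces cover the whole region (they overlap on $\eta=z^T\xi$, so nothing is lost), $V_S^{ik}=\max\{V_{S1}^{ik},V_{S2}^{ik}\}$. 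Conic weak duality gives $V_{S1}^{ik}\le\tilde V_{S1}^{ik}$ and $V_{S2}^{ik}\le\tilde V_{S2}^{ik}$, the duals $(D_{SSD-1}^{ik})$, $(D_{SSD-2}^{ik})$ being obtained from the Lagrangian/conic–dual computation (the second–order cone constraint $\|\xi-\widehat{\xi}_i\|\le m$ producing the norm inequalities $\|\cdot\|\le\lambda_k$ after eliminating the redundant multipliers); under Assumption~\ref{ass:strictlyfeasible} each primal has a Slater point, so strong duality upgrades these to equalities and $V_S^{ik}=\max\{\tilde V_{S1}^{ik},\tilde V_{S2}^{ik}\}$.

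Finally I would show that replacing $V_S^{ik}$ in \eqref{equ:sss} by the epigraph variables $V^{ik}$ subject to \eqref{problem4-10} yields an equivalent constraint system: if \eqref{problem4-10} holds then $\lambda_k\epsilon+\frac1N\sum_{i}V_S^{ik}=\lambda_k\epsilon+\frac1N\sum_i\max\{\tilde V_{S1}^{ik},\tilde V_{S2}^{ik}\}\le\lambda_k\epsilon+\frac1N\sum_i V^{ik}\le0$, and conversely $V^{ik}=\max\{\tilde V_{S1}^{ik},\tilde V_{S2}^{ik}\}$ is admissible; since $\tilde V_{S1}^{ik}$ and $\tilde V_{S2}^{ik}$ are themselves infima over their dual variables, the constraints $V^{ik}\ge\tilde V_{S1}^{ik}$, $V^{ik}\ge\tilde V_{S2}^{ik}$ are equivalent to the existence of dual–feasible $(\mu^{ik},\nu^{ik})$ and $(\tilde\mu^{ik},\tilde\nu^{ik})$ satisfying the two explicit affine lower bounds on $V^{ik}$ — which is precisely the constraint block of $(P_{SSD-U})$ after inserting the explicit forms of $(D_{SSD-1}^{ik})$ and $(D_{SSD-2}^{ik})$. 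Chaining the equalities with the single inequality $v_{\eqref{222}}\ge v_{\eqref{problem4-3}}=v_{(P_{SSD})}$ then gives the claim. The step I expect to need the most care is the decomposition and dualization of $V_S^{ik}$: one must verify that splitting on $\operatorname{sign}(\eta-z^T\xi)$ keeps the common boundary, that the conic dual is transcribed correctly, and that Assumption~\ref{ass:strictlyfeasible} closes the duality gap at every index $(i,k)$; I would also remark that for the \emph{upper bound} alone weak conic duality already suffices (it only makes \eqref{problem4-10} harder to satisfy), with strong duality needed merely for the clean equivalence asserted in the theorem.
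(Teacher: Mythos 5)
Your proof is correct and follows essentially the same route as the paper: the paper's own proof of Theorem \ref{the:4} is a two-line summary that defers to the chain \eqref{problem4-3} $\xLongrightarrow{}$ \eqref{222} $\Leftrightarrow$ \eqref{problem4-6} $\Leftrightarrow$ \eqref{equ:sss}, the splitting \eqref{equ:V}, and conic duality under Assumption \ref{ass:strictlyfeasible}, which is exactly the argument you reconstruct in detail. Your closing observation that weak conic duality already suffices for the upper-bound claim itself --- strong duality and dual attainment (which the paper secures via Assumption \ref{ass:strictlyfeasible} and the closedness of the dual feasible sets) are needed only for the exact equivalence of ($P_{SSD-U}$) with \eqref{222} --- is a correct and worthwhile refinement.
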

\begin{proof}
Problem ($P_{SSD-U}$) is a reformulation of problem \eqref{222}, and thus an upper bound approximation of problem ($P_{SSD}$). The infimum in ($D_{SSD-1}^{ik}$) and ($D_{SSD-2}^{ik}$) can be reached by the corresponding minimization problems due to the closeness of the feasible sets, given that the finite optimal values exists.
%\eqref{equ:fl1}-\eqref{equ:fl2}.
%The latter problem is a reformulation of problem ($P_{SSD}$), as illustrated in Figure \ref{fig:111}. Therefore, problem ($P_{SSD-U}$) provides an upper bound approximation for problem ($P_{SSD}$).
\end{proof}

\subsection{\texorpdfstring{Sequential convex approximation for ($P_{SSD-U}$)}{ }}
We observe that bilinear terms $\mu_2^{ik} z$ and $\tilde{\mu}_2^{ik}z$ in problem ($P_{SSD-U}$) make it difficult to solve problem ($P_{SSD-U}$) directly. We apply a sequential convex approximation method to solve problem ($P_{SSD-U}$), see Algorithm \ref{alg:1}. The idea is to separate coupling variables. At each iteration,
we fix $z$ and optimize w.r.t. $\mu,\tilde{\mu}$; then fix  $\mu,\tilde{\mu}$ and optimize w.r.t. $z$. The sequential convex approximation method finally generates a sequence of decisions whose objective values converge to an upper bound of the optimal value of  ($P_{SSD-U}$).

\begin{algorithm}
\setstretch{1}
\caption{Sequential convex approximation}
\label{alg:1}
\begin{algorithmic}
\STATE \textbf{Start from} $z^{\iota} \in Z$,~${\iota}=1$.
\WHILE{${\iota} \ge 1$}
\STATE Solve problem ($P_{SSD-U}$) with
an additional constraint $z = z^{\iota}$.
Denote the optimal $\mu,\tilde{\mu}$ by $\mu^{{\iota}},\tilde{\mu}^{{\iota}}$, respectively.
\STATE Solve problem ($P_{SSD-U}$) with
additional constraints $\mu=\mu^{{\iota}},\tilde{\mu} = \tilde{\mu}^{{\iota}}$.
Denote the optimal $z$ by $z^{{\iota}+1}$.
\IF{$z^{{\iota}+1} = z^{{\iota}}$}
\STATE \text{Break}.\\
\ELSE{}
\STATE ${\iota} \leftarrow {\iota}+1$.
\ENDIF
\ENDWHILE
\end{algorithmic}
\end{algorithm}

\begin{proposition}\label{the:5}
  Suppose that the optimal value of problem ($P_{SSD}$) is finite. Given a starting point $z^1$. Algorithm \ref{alg:1} generates a sequence of decisions whose objective values converge to an upper bound of the optimal value of problem ($P_{SSD-U}$).
\end{proposition}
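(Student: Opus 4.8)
The plan is to treat Algorithm \ref{alg:1} as an alternating (block-coordinate) minimization of $(P_{SSD-U})$ over the two blocks $z$ and $(\mu,\tilde\mu,\nu,\tilde\nu,\lambda,V)$, and to show that the objective value it produces is non-increasing and bounded below, hence convergent, with a limit that dominates the optimal value $v_{(P_{SSD-U})}$ of $(P_{SSD-U})$. The first observation I would record is structural: the objective $f(z)$ depends only on the block $z$, and each of the two problems solved inside iteration $\iota$ is merely $(P_{SSD-U})$ with an extra equality constraint ($z=z^{\iota}$ in the first, $\mu=\mu^{\iota},\tilde\mu=\tilde\mu^{\iota}$ in the second), so any point feasible for either subproblem is feasible for $(P_{SSD-U})$ itself.

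Next I would chase the monotonicity. In the first subproblem $z$ is frozen at $z^{\iota}$, so its optimal value is exactly $f(z^{\iota})$, and the returned tuple $(z^{\iota},\mu^{\iota},\tilde\mu^{\iota},\dots)$ is a feasible point of the second subproblem with objective value $f(z^{\iota})$; consequently the optimum $z^{\iota+1}$ of the second subproblem satisfies $f(z^{\iota+1})\le f(z^{\iota})$, and $(z^{\iota+1},\mu^{\iota},\tilde\mu^{\iota},\dots)$ is feasible for $(P_{SSD-U})$. Then I would establish the lower bound: $Z$ is a bounded polyhedron, hence compact, and $f$ is proper and continuous, so $f$ attains a finite minimum on $Z$ and $f(z^{\iota})\ge\inf_{z\in Z}f(z)>-\infty$ for every $\iota$ (this is where the finiteness hypothesis enters). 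A non-increasing sequence bounded below converges, so $f(z^{\iota})\to\bar v$ for some $\bar v\in\mathbb{R}$; if instead the stopping rule $z^{\iota+1}=z^{\iota}$ fires at a finite step, the sequence is eventually constant and the claim is immediate.

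To finish, I would argue $\bar v\ge v_{(P_{SSD-U})}$: by the monotonicity step, for all $\iota\ge 2$ the tuple $(z^{\iota},\mu^{\iota-1},\tilde\mu^{\iota-1},\dots)$ is feasible for $(P_{SSD-U})$, hence $f(z^{\iota})\ge v_{(P_{SSD-U})}$, and letting $\iota\to\infty$ gives $\bar v\ge v_{(P_{SSD-U})}$; combining with Theorem \ref{the:4} shows $\bar v$ also upper bounds the optimal value of $(P_{SSD})$. The step I expect to need the most care is not monotonicity or boundedness but well-definedness of the iteration: one must know that every subproblem encountered is feasible. I would handle this by noting that feasibility propagates — once $(z^{\iota},\mu^{\iota-1},\tilde\mu^{\iota-1},\dots)$ is feasible for $(P_{SSD-U})$, the first subproblem of iteration $\iota$ (fixing $z=z^{\iota}$) and then its second subproblem are feasible — so the only genuine requirement is that the very first subproblem ($z=z^{1}$) be feasible, which I would build into the initialization, e.g.\ by taking $z^{1}$ feasible for $(P_{SSD-U})$ (possible whenever Assumption \ref{ass:new} holds, via $z^1\in\mathcal{S}_1$). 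I would also flag the modest scope of the statement: only the objective values converge, and only to \emph{some} upper bound of $v_{(P_{SSD-U})}$; because of the bilinear coupling the scheme is not guaranteed to reach the global optimum of $(P_{SSD-U})$ nor to have $\{z^{\iota}\}$ converge to a minimizer.
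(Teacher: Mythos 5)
Your proposal is correct and follows essentially the same route as the paper's proof: each subproblem is $(P_{SSD-U})$ with an added equality constraint (so every iterate's value upper-bounds the optimum), the iterate from the first subproblem is feasible for the second so $f(z^{\iota+1})\le f(z^{\iota})$, and the monotone sequence is bounded below (the paper uses the finite optimal value of $(P_{SSD})$ where you use compactness of $Z$), hence convergent. Your added remark on well-definedness of the iteration (feasibility of the first subproblem at $z^1$) is a point the paper leaves implicit and is a worthwhile observation, but it does not change the argument.
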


\begin{proof}
  Denote the feasible set of problem ($P_{SSD-U}$) by $\mathcal{F}_{U}$. We write all the decision variables excluding $z,\mu,\tilde{\mu}$ by $y$. We can thus write problem ($P_{SSD-U}$) in a compact form $\min \{f(z)|~(z,\mu,\tilde{\mu},y) \in \mathcal{F}_{U}\}$.

  Firstly, observe that each problem we solve in Algorithm \ref{alg:1} has an additional constraint compared with problem ($P_{SSD-U}$). Therefore, $f(z^{\iota}),~{\iota}=1,\cdots,$ are upper bounds to the optimal value of problem ($P_{SSD-U}$).

  Next, the sequence $\{f(z^{\iota})\}$ has a finite lower bound, the optimal value of problem ($P_{SSD}$).
  Thus in order to show the convergence of $\{f(z^{\iota})\}$, it is sufficient to prove that $\{f(z^{\iota})\}$ is nonincreasing.
  From Algorithm \ref{alg:1}, there exists $y'$ such that
  $(\mu^{\iota},\tilde{\mu}^{\iota},  y')=\argmin_{\mu,\tilde{\mu},  y} \{f(z^{\iota})|~(z^{\iota},\mu,\tilde{\mu}, y) \in \mathcal{F}_U\}$.
  It follows immediately that $(z^{\iota}, \mu,\tilde{\mu}, y') \in \mathcal{F}_U$.
  Also there exists $y''$ such that
   $(z^{{\iota}+1},y'') = \argmin_{z,y}\{ f(z)|~(z, \mu^{\iota},\tilde{\mu}^{\iota},y) \in \mathcal{F}_U \}$.
  Since   $(z^{\iota}, \mu^{\iota},\tilde{\mu}^{\iota}, y') \in \mathcal{F}_U$,
  we have
  $
  f(z^{{\iota}+1}) \le f(z^{{\iota}}).
  $
\end{proof}

Here it is necessary to point out that any element in the sequence of optimal values generated by Algorithm \ref{alg:1} is an upper bound of the optimal value of problem ($P_{SSD-U}$). Each problem we solve in Algorithm \ref{alg:1} is a second-order cone programming and thus is computationally tractable.

To conclude this section, we divide $\mathcal{R}$ into sub-intervals and exchange the order of the expectation operator and the supremum over each sub-interval to derive an upper bound approximation ($P_{SSD-U}$) for the distributionally robust {SSD} constrained optimization problem ($P_{SSD}$). We prove the convergence of the optimal value of the upper bound approximation problem and quantitatively estimate the approximation error. To cope with bilinear terms in problem ($P_{SSD-U}$), we apply the sequential convex approximation method, Algorithm \ref{alg:1}, to obtain an upper bound of the optimal value of problem ($P_{SSD-U}$).

\section{Numerical experiments}\label{4}

In this section, we present the results of numerical experiments to illustrate the validity and practicality of our lower and upper bound approximation methods for model ($P_{SSD}$).
The numerical experiments are carried out by calling the Gurobi solver in CVX package in MATLAB R2016a on a Dell G7 laptop with Windows 10 operating system, Intel Core i7 8750H CPU 2.21 GHz and 16 GB RAM.

\subsection{Case study: an illustrative numerical example}
We begin with a simple numerical example and examine the validation of the proposed lower and upper bound approximations.
Consider the following problem:
\begin{flalign}
  \min \quad  & \frac{1}{2}\|z\|_2 \nonumber\\
  \text{s.t.} \quad & \mathbb{E}_{P}[(\eta-z^T\xi)_{+}] \le \mathbb{E}_{P}[(\eta-z_0^T \xi)_{+}],~\forall \eta \in \mathbb{R},~\forall P \in \mathcal{Q},\label{equ:newexam}\\
  & z \in \mathbb{R}_2^+,\|z\|_1 \le 1. \nonumber
\end{flalign}
where $z_0=(1,0)^T$ and $\mathcal{Q}=\{P \in \mathscr{P}(\Xi):d_K(P,\widehat{P}_N)\le \epsilon\}$ is defined as that in \eqref{equ:Q}.
Here $\widehat{P}_N=\frac{1}{N}\sum_{i=1}^{N}\delta_{\widehat{\xi}_i}$ is the empirical distribution.
The support set is supposed to be $\Xi=\{(\xi_1,\xi_2)^T|~\xi_1 \in [0,250],~\xi_2 \in [0,500]\}$.
We set $\epsilon=10^{-5},N=10$ and the observed sample set $\{\widehat{\xi}_i\}_{i=1}^{10}$ consists of
$(0,0)^T$, $(250,0)^{T}$, $(0,500)^{T}$, $(100,100)^T$, $(200,200)^T$, $(100,0)^T$, $(200,0)^T$, $(0,100)^T$, $(0,200)^T$, $(200,500)^T$.

\begin{table}[h]
  \centering
  \caption{The optimal values and the optimal solutions of the lower and upper bound approximations to problem \eqref{equ:newexam}.}\label{tab:55}
  \scalebox{0.68}{
  \begin{tabular}{llllllllr}
    \toprule
    % after \\: \hline or \cline{col1-col2} \cline{col3-col4} ...
    \multicolumn{4}{l}{lower bound approximation (($P_{SSD-L}$) or Algorithm \ref{alg:11})} & &  \multicolumn{3}{l}{upper bound approximation (Algorithm \ref{alg:1})} &  {Gap$\qquad$} \\
    \cline{1-4}
    \cline{6-8}
     {$\mathcal{N}$} & {$\mathcal{M}$} & Optimal value & Optimal solution & & $\mathcal{K}$ & Optimal value & Optimal solution \\
    \hline
    100 & 100 & 0.2922 & $(0.4229,0.4027)^T$ & & 10 & 0.4097 & $(0.8010,0.1564)^T$  & {40.2122\%}\\
    200 & 200 & 0.2964 & $(0.4266,0.4077)^T$ & & 11 & 0.3044 & $(0.4590,0.4002)^T$  & {2.6991\%}\\
    300 & 300 & 0.3014 & $(0.4423,0.4014)^T$ & & 12 &0.3025  & $(0.4653,0.3868)^T$  & {0.3650\%}\\
    \bottomrule
  \end{tabular}}
\end{table}
We get the lower bound approximation by solving the linear programming formulation ($P_{SSD-L}$) or Algorithm \ref{alg:11} and obtain the upper bound approximation by Algorithm \ref{alg:1}. The optimal values and the optimal solutions are shown in Table \ref{tab:55}. We also calculate the relative gaps of the optimal values of the lower and upper bound approximations (i.e., Gap$=|\frac{\text{upper}-\text{lower}}{\text{lower}}|$). From Table \ref{tab:55}, we can see that the relative gap between the optimal values of the lower and upper bound approximations decreases quickly to $0$ with the increase of sample sizes $\mathcal{N},\mathcal{M}$ and the interval number $\mathcal{K}$, which verifies the validation of the proposed approximation methods.

\subsection{Case study: a practical portfolio selection problem}
We consider a financial application of model ($P_{SSD}$) to the portfolio selection problem with distributionally robust SSD constraints:
\begin{equation}\label{problem5-2}\begin{array}{cl}
\min \limits_{z \in Z}  \quad &  \mathbb{E}_{\widehat{P}_N}[-z^T \xi] \\
\quad\text{s.t.} \,\quad & \mathbb{E}_{P}[(\eta-z^T\xi)_{+}] \le \mathbb{E}_{P}[(\eta-z_0^T \xi)_{+}],~\forall \eta \in \mathbb{R},~\forall P \in \mathcal{Q},
\end{array}\end{equation} % in both first and second order stochastic dominance constrained case.
where $Z=\{z\in \mathbb{R}^n|~z\ge0,\sum_{i=1}^{n}z_i=1\}$. Problem \eqref{problem5-2} is inspired by \cite[Example 4.2]{Xu}. The difference between problem \eqref{problem5-2} and that in \cite[Example 4.2]{Xu} lies in the construction method of the ambiguity set. In \cite[Example 4.2]{Xu}, the ambiguity set $\mathcal{Q}$ is determined by first two order moment information, while in problem \eqref{problem5-2}, $\mathcal{Q}$ is a Wasserstein ball.

%We select eight risky assets to constitute the stock pool, which are U.S. three-month treasury bills, U.S. long-term government bonds, S\&P 500, Willshire 5000, NASDAQ, Lehmann Brothers corporate bond index, EAFE foreign stock index, and gold.
We use the same historical annual return rate data of eight risky assets as that in \cite[Table 8.1]{D1} (with a total of $N=22$ years).
We choose the equally weighted portfolio as the benchmark portfolio $z_0$.
%The support set is defined  $\Xi=\{x=(x_1,\cdots,x_n)|a_i \le x_i \le b_i, i=1,\cdots,n\}$,
%where $a_i$ and $b_i$ denote the smallest and largest historical annual return rates of the $i$th asset, respectively.
%In Algorithm \ref{alg:1}, for deriving an upper bound approximation, the choice of the starting point $z^1$ is an essential issue. We
%already know that 1) the benchmark portfolio $z_0$ is definitely feasible for problem ($P_{SSD}$); 2) problem ($P_{SSD-U}$) is an upper
%bound approximation to problem ($P_{SSD}$) from Theorem \ref{the:4}. Therefore, $z_0$ is probably feasible for problem ($P_{SSD-U}$) and
%we thus choose $z_0$ as the starting point $z^1$.
We select $z_0$ as the starting point $z^1$ in Algorithm \ref{alg:1}.

\begin{comment}
The lower bound can be obtained by solving a linear programming problem or a sequence of small-scale linear programming problems using Algorithm \ref{alg:11}.
When deriving the lower bound approximation, we should approximate sets $\Xi$ and $\mathcal{R}=z_0^T \Xi$ by randomly selected samples $\Xi_{\mathcal{N}}$ and $\Xi_{\mathcal{M}}$, respectively.
To obtain a better lower bound approximation, we can repeat the sampling-optimizing process for multiple times and choose the largest optimal value as the final lower bound. For deriving the upper bound approximation, we solve a sequence of second order conic programming problems. When Algorithm \ref{alg:1} stops, we take the optimal value of the last step as the upper bound.
\end{comment}
\begin{comment}
During the implementation of the numerical experiments, we find that solving subproblems in Algorithm \ref{alg:1} sometimes induce `Inaccurate' or `Failed' status, especially when extremely small ($<10^{-10}$) $\mu,~\tilde{\mu}$ or $\lambda$ occurs in the iteration. In such case, we use the last optimal value with `Solved' status before Algorithm \ref{alg:1} stops as the upper bound.
This is adequate because from Theorem \ref{the:5} we know that Algorithm \ref{alg:1} provides a non-increasing sequence of optimal values and every element in the sequence is an upper bound for problem ($P_{SSD-U}$). Hence, the optimal values in the earlier steps are still upper bounds and useful for us, though not the best one.
\end{comment}

In what follows, we show the numerical results emphatically illustrating from the following aspects:
the convergence of the lower and upper bound approximations with respect to the sample sizes and the interval number,
%the impact of the interval number on the optimal value of the upper bound approximation,
the price of introducing distributional robustness in SSD constraints,
%the impact of robust second-order stochastic dominance constraints defined by Wasserstein distance,
%the CPU times of different lower bound approximation methods,
and the influence of the robust radius.

\subsubsection{Convergence of lower and upper bounds}
Firstly, we %examine the effectiveness of the proposed lower and upper bound approximations. We also
demonstrate the convergence of the lower bound approximation with respect to the sample sizes $\mathcal{N},~\mathcal{M}$ and the decreasing trend of the upper bound approximation when the interval number $\mathcal{K}$ increases.
We fix the robust radius $\epsilon=10^{-4}$.

For the lower bound approximation, we consider the cases with the sample sizes being $\mathcal{N}=\mathcal{M}=40,60,80,100,120$.
To make fair comparison later in Section \ref{subsec:4.1} with the portfolio optimization problem with classic (non-robust) SSD constraints, we let $\Xi_{\mathcal{N}}$ contain all the historical annual return rates $\{\widehat{\xi}_i\}_{i=1}^{N}$ from \cite[Table 8.1]{D1} and $\varGamma_{\mathcal{M}}$ contain $\{z_0^T\widehat{\xi}_i\}_{i=1}^{N}$.
For the upper bound approximation, we consider the cases with the interval number being $\mathcal{K}=1,2,4,8,12$, respectively. Figure \ref{fig:conv} shows the convergence trend of the optimal values of the lower and upper bound approximations for problem \eqref{problem5-2}.

\begin{figure}[h]
	\centering
\scalebox{0.55}{
	\includegraphics{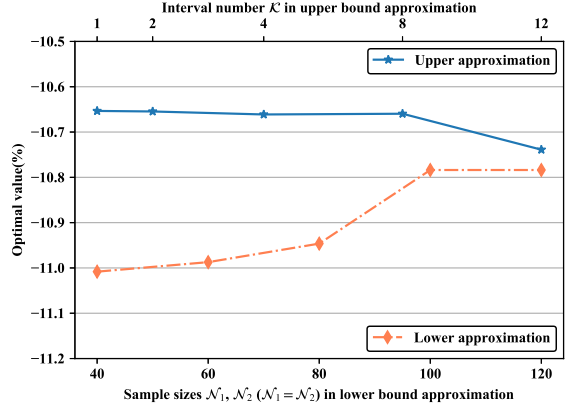}}
	\caption{Optimal values of the lower bound approximation with respect to $\mathcal{N}, \mathcal{M}$ and that of the upper bound approximation with respect to $\mathcal{K}$.}\label{fig:conv}
\end{figure}

From Figure \ref{fig:conv}, we can observe that the lower bound monotonuously increases with the increase of the sample sizes $\mathcal{N},~\mathcal{M}$ and the upper bound decreases with the increase of the interval number $\mathcal{K}$. The gap between the lower and uppers approaches $0$.
These observations verify the quantitative convergency established in Theorem \ref{the:0927-1} and Theorem \ref{pro:cite}.
To see more details, we present in Table \ref{tab:n1} the optimal values and the optimal solutions obtained from the lower and upper bound approximations.

\begin{table}[h]
  \centering
  \caption{Optimal values and the optimal solutions of the lower bound approximation with respect to $\mathcal{N}, \mathcal{M}$,
  and those of the upper bound approximation with respect to $\mathcal{K}$.}
  \label{tab:n1}
  \scalebox{0.62}{
  \begin{tabular}{llllllll}
    \toprule
    % after \\: \hline or \cline{col1-col2} \cline{col3-col4} ...
    \multicolumn{3}{l}{lower bound approximation (($P_{SSD-L}$) or Algorithm \ref{alg:11})} & $~$ & \multicolumn{2}{l}{upper bound approximation (Algorithm \ref{alg:1})} & &  \\
    \cline{1-3}
    \cline{5-6}
    $\mathcal{N}$ & $\mathcal{M}$ & Optimal value(\%)& & $\mathcal{K}$ & Optimal value(\%) & &Gap \\
    & & Optimal solution & & & Optimal solution \\
    \midrule
    40  &  40  &  -11.0082 &  & 1 & -10.6534  & & 3.2231\% \\
    & & (0.000,0.000,0.068,0.188,0.000,0.391,0.231,0.122) & & & (0.125,0.125,0.125,0.125,0.125,0.125,0.125,0.125) \\
    60  &  60  &  -10.9872 &  & 2 & -10.6543  & & 3.0299\%\\
    & & (0.000,0.038,0.000,0.269,0.000,0.354,0.213,0.126) & & & (0.125,0.124,0.124,0.127,0.125,0.126,0.125,0.125) \\
    80  &  80  &  -10.9463 &  & 4 & -10.6546  & & 2.6648\%\\
    & & (0.000,0.006,0.094,0.138,0.036,0.389,0.215,0.123) & & & (0.124,0.124,0.123,0.127,0.124,0.127,0.125,0.125) \\
    100 &  100 &  -10.7838 &  & 8 & -10.6551  & & 1.1935\%\\
    & & (0.000,0.018,0.168,0.000,0.131,0.384,0.172,0.126) & & & (0.124,0.124,0.123,0.128,0.124,0.127,0.125,0.125) \\
    120 &  120 &  -10.7838 &  & 12& -10.7389  & & 0.4164\%\\
    & & (0.000,0.018,0.168,0.000,0.131,0.384,0.172,0.126) & & & (0.075,0.067,0.005,0.274,0.087,0.238,0.125,0.129) \\
    \bottomrule
  \end{tabular}}
\end{table}

From Table \ref{tab:n1}, we can see the changing trend of the optimal portfolios of the lower and upper bound approximations. Especially for the upper bound approximation, the optimal portfolio under $\mathcal{K}=1$ is the equally weighted portfolio, while the optimal portfolio under $\mathcal{K}=12$ is quite different from the equally weighted portfolio and approaches the optimal portfolios obtained from the lower bound approximation.
We observe from Table \ref{tab:n1} that the lower and upper bounds we finally obtain are not equal.
This happens because when implementing the upper bound approximation we can only have finite sub-intervals and thus a gap is induced whenever we exchange the order of operators $\sup_{\eta \in [\uline{\eta}_{k},\bar{\eta}_{k}]}$ and $\mathbb{E}_P$.
We calculate the relative gap between the upper bound with $\mathcal{K}=12$ and the lower bound with $\mathcal{N}=120,\mathcal{M}=120$, which is only $|\frac{10.7838-10.7389}{-10.7838}|=0.4164\%$. This is quite satisfactory for real applications.

\subsubsection{Price of distributional robustness}\label{subsec:4.1}

To examine the price of introducing distributional robustness, we compare the numerical result of problem \eqref{problem5-2} with that of classic SSD constrained portfolio optimization problem
\begin{equation}\label{equ:comparison model}
  \min  \left\{ \mathbb{E}_{\widehat{P}_N}[-z^T \xi] | ~z \in Z, z^T \xi \succeq_{(2)}^{\widehat{P}_N} z_0^T \xi \right\}.
\end{equation}
\begin{comment}
which is equivalent to
\begin{flalign}\label{equ:comparison model}
\min \limits_{z \in Z}  \quad &  \mathbb{E}_{\widehat{P}_N}[-z^T \xi] \\
\quad\text{s.t.} \,\quad & \mathbb{E}_{\widehat{P}_N}[(\eta-z^T\xi)_{+}] \le \mathbb{E}_{\widehat{P}_N}[(\eta-z_0^T \xi)_{+}],~\forall \eta \in \mathbb{R}. \nonumber
\end{flalign}
Here the expectations are taken under the empirical distribution $\widehat{P}_N$.

Table \ref{tab:3} reports the comparative results. In Table \ref{tab:3}, for the distributionally robust stochastic dominance constrained problem \eqref{problem5-2}, denoted by `RSD', we
present the optimal expected return rate (absolute value of the optimal value) of the lower bound approximation obtained by solving ($P_{SSD-L}$) or Algorithm \ref{alg:11} under $\epsilon=10^{-4},\mathcal{N}=120,\mathcal{M}=120$, and that of the upper bound approximation obtained by Algorithm \ref{alg:1} under $\mathcal{K}=12$;
for the classic stochastic dominance constrained problem \eqref{equ:comparison model}, denoted by `SD', we present the optimal expected return rate. Table \ref{tab:3} also exhibits the expected return rate of the benchmark portfolio $z_0$.
\end{comment}
Table \ref{tab:3} reports the comparative results of the optimal expected return rates, which are absolute values of the optimal values of problems \eqref{problem5-2} and \eqref{equ:comparison model} since they are minimization problems.

\begin{table}[ht]
%\begin{sidewaystable}[h]
\caption {Optimal expected return rates (absolute value of the optimal value) of the lower and upper bound approximations to problem \eqref{problem5-2}, optimal expected return rate of problem \eqref{equ:comparison model}, and expected return rate of $z_0$.}\label{tab:3}
\centering
\scalebox{0.75}{
\begin{tabular}{lllll}
\toprule
\multicolumn{2}{l}{Portfolio optimization problem} & &{Expected return rate(\%)}     &                   \\
\cline{1-2} \cline{4-5}
\multirow{2}{*}{ \eqref{problem5-2}}
     &  lower bound approximation ($\mathcal{N}=\mathcal{M}=120$)   &  &   10.7838                         \\
     &  upper bound approximation ($\mathcal{K}=12$)  &  &   10.7389                \\
     %&  gap(\%)                     &      3.215 &-\\
\cline{1-2} \cline{4-5}
\multicolumn{2}{l}{{ \eqref{equ:comparison model}}}
                                    &  &   11.0082                    \\
\multicolumn{2}{l}{{Benchmark}}     &  &   10.6534         \\
\bottomrule
\end{tabular}}
%\end{sidewaystable}
\end{table}

From Table \ref{tab:3}, we can see that both the lower and upper bound approximations to problem \eqref{problem5-2} with distributionally robust SSD constraints derive a smaller optimal expected return rate than problem \eqref{equ:comparison model} with classic SSD constraints. Therefore, the optimal expected return rate of problem \eqref{problem5-2} must be smaller than that of problem \eqref{equ:comparison model}. As we expected, considering the distributionally robust ambiguity in SSD constraints induces a more conservative solution. It can also be seen from Table \ref{tab:3} that the expected return rates of the lower and upper bound approximations are larger than that of the benchmark portfolio, which means that model \eqref{problem5-2} derives a portfolio better than the benchmark portfolio in sense of the expected return rate. These numerical results demonstrate that introducing distributional robustness brings in conservation without loss of stochastic dominance.

\subsubsection{Influence of the robust radius}
Finally, we briefly examine the impact of robust radius on the
lower and upper bound approximations to the
portfolio optimization problem \eqref{problem5-2}. %with distributionally robust second-order stochastic dominance constraints.
%To obtain the lower bound approximation, we solve linear programming ($P_{SSD-L}$) or Algorithm \ref{alg:11} with $\mathcal{N}=120$ and $\mathcal{M}=120$,
%while for the upper bound approximation, we adopt Algorithm \ref{alg:1} under $\mathcal{K}=12$.
The optimal values of the lower and upper bound approximations
under different robust radii
%$\epsilon=10^{-5},10^{-4},10^{-3},10^{-2},0.1,0.5,1$,
are shown in Table \ref{tab:1}.
%\makeatletter\def\@captype{table}\makeatother
\begin{table}[h]
\caption {Optimal values of the lower and upper bound approximations,
and their relative gaps
%to problem \eqref{problem5-2},
with respect to different robust radii.}\label{tab:1}
\centering
\scalebox{0.75}{
\begin{tabular}{llllr}
\toprule
{Robust radius}  & \multicolumn{2}{l}{Optimal values (\%)}& &{Gap$\qquad$}\\
\cline{2-3}
{$\epsilon$}          &  lower bound approximation                      &  upper bound approximation   \\
\midrule
$10^{-5}$   &       -10.8775                            &       -10.8268  && 0.4661\%\\
$10^{-4}$   &       -10.7838                            &       -10.7389  && 0.4164\%\\
$10^{-3}$   &       -10.7836                            &       -10.6536  && 1.2055\%\\
$10^{-2}$   &       -10.7823                            &       -10.6535  && 1.1946\%\\
%\hline
0.1         &       -10.7689                            &       -10.6534  && 1.0725\%\\
0.5         &       -10.6885                            &       -10.6534  && 0.3284\%\\
1           &       -10.6534                            &       -10.6534  && 0\%\\
%2           &       -10.6534                            &       -10.6534  && 0\%\\
\bottomrule
\end{tabular}}
\end{table}

\begin{comment}
\begin{figure}[h]
	\centering
\scalebox{0.7}{
	\includegraphics{foo.eps}}% [width=9cm]
	\caption{Optimal expected return rates of the lower and upper bound approximations to the
portfolio optimization problem \eqref{problem5-2} with robust SSD constraints, with respect to different robust radius.}\label{fig:12}
\end{figure}
\end{comment}

We can see from Table \ref{tab:1} that, as is expected both the optimal values of the lower and upper bound approximations of problem \eqref{problem5-2} are monotonously increasing, which implies that the optimal value of problem \eqref{problem5-2} increases as the robust radius increases.
Table \ref{tab:1} also tells us that choosing a proper robust radius is a crucial issue in distributionally robust SSD constrained problems. For robust radius $\epsilon \ge 0.1$, the upper bound coincides with $\mathbb{E}_{\widehat{P}_N}[-z_0^T\xi]$, while for $\epsilon \le 10^{-2}$ both the lower and upper bound approximations derive optimal portfolios better than the benchmark portfolio. %and thus are useful for portfolio selection.

%We notice that the expected return rate of the benchmark portfolio (the benchmark portfolio is always feasible for problem \eqref{problem5-2} and its expected return rate provides a trivial upper bound, -10.6534, for the optimal value of problem \eqref{problem5-2}), which means that the upper bound approximation does not provide additionally useful information for portfolio selection. Fortunately, when

\section{Conclusion}\label{5}

We consider a distributionally robust SSD constrained optimization problem, where the true distribution of the uncertain parameters is ambiguous. The ambiguity set contains those probability distributions close to the empirical distribution under the Wasserstein distance.

We propose two approximation methods to obtain bounds on the optimal value of the original problem. We adopt the sample approximation approach to develop a linear programming formulation to obtain a lower bound approximation for the problem. The lower bound approximation can be easily solved by using linear programming formulation or by the cutting-plane method.
Moreover, {we establish the quantitative convergency for the lower bound approximation problem.} We also develop an upper bound approximation and {quantitatively estimate the approximation error} between the optimal value of the upper bound approximation and that of the original problem. We propose a novel split-and-dual decomposition framework to reformulate distributionally robust SSD constraints. The upper bound approximation problem can be solved by a sequence of second-order cone programming problems. We carry out numerical experiments on a portfolio optimization problem to illustrate our lower and upper bound approximation methods.

One of future research topics would be modifying the design of cutting-planes to solve the lower bound approximation problem more efficiently. {While for the upper bound approximation, it is interesting to investigate the critical number of intervals for further enhancing the practicality of the approximation scheme.} Besides, finding efficient approximation and solution methods for distributionally robust multivariate robust SSD constrained optimization is also a promising topic.

\begin{comment}
\section*{Acknowledgments}
{
The authors are grateful to the associate editor (Professor Andrzej Ruszczy{\'n}ski) and two anonymous reviewers for their extremely insightful and detailed comments and suggestions, which have helped us to improve the paper significantly in both contents and style.}
\end{comment}

\section*{Funding}
This research was supported by the National Natural Science Foundation of China under grant numbers 11991023, 11991020,  11735011, and 11901449.

\begin{spacing}{1}
\bibliographystyle{plain}
\bibliography{reference}
\end{spacing}
\end{document}